% SIAM Article Template
% Packages and macros go here
% New commands
% macros for user - defined functions
% macro for unit names
% Add a serial/Oxford comma by default.
% Used for creating new theorem and remark environments
% Sets running headers as well as PDF title and authors
% Title. If the supplement option is on, then "Supplementary Material"
% is automatically inserted before the title.
% Authors: full names plus addresses.
% Optional PDF information

\documentclass[review,hidelinks,onefignum,onetabnum]{siamart250106}
%%%%%%%%%%%%%%%%%%%%%%%%%%%%%%%%%%%%%%%%%%%%%%%%%%%%%%%%%%%%%%%%%%%%%%%%%%%%%%%%%%%%%%%%%%%%%%%%%%%%%%%%%%%%%%%%%%%%%%%%%%%%%%%%%%%%%%%%%%%%%%%%%%%%%%%%%%%%%%%%%%%%%%%%%%%%%%%%%%%%%%%%%%%%%%%%%%%%%%%%%%%%%%%%%%%%%%%%%%%%%%%%%%%%%%%%%%%%%%%%%%%%%%%%%%%%
\usepackage{lipsum}
\usepackage{amsfonts}
\usepackage{graphicx}
\usepackage{epstopdf}
\usepackage{algorithmic}
\usepackage{amssymb}
\usepackage{amsmath}
\usepackage{subfig}
\usepackage{amsopn}

\setcounter{MaxMatrixCols}{10}

\ifpdf
  \DeclareGraphicsExtensions{.eps,.pdf,.png,.jpg}
\else
  \DeclareGraphicsExtensions{.eps}
\fi

\def\dint{\mathop{\displaystyle \int}}
\def\dsum{\mathop{\displaystyle \sum }}

\newsiamremark{remark}{Remark}
\newsiamremark{hypothesis}{Hypothesis}
\crefname{hypothesis}{Hypothesis}{Hypotheses}
\newsiamthm{claim}{Claim}
\headers{CCMM for a CIP of the Epidemiology}{M.~V.~Klibanov and T.~Truong}

\ifpdf
\hypersetup{
  pdftitle={Monitoring of Epidemics via Solution of a CIP},
  pdfauthor={M. V. Klibanov and T. Truong}
}
\fi

\begin{document}

\title{The Carleman Contraction Mapping Method for a Coefficient Inverse
Problem of the Epidemiology\thanks{%
Submitted to the editors DATE. \funding{}}}
\author{Michael~V.~Klibanov\thanks{%
Department of Mathematics and Statistics, University of North Carolina at
Charlotte, Charlotte, NC } \and Trung~Truong\thanks{%
Department of Mathematics and Physics, Marshall University, Huntington, WV}
}
\maketitle

\begin{abstract}
It is proposed to monitor spatial and temporal spreads of epidemics via
solution of a Coefficient Inverse Problem for a system of three coupled
nonlinear parabolic equations. To solve this problem numerically, a version
of the so-called Carleman contraction mapping method is developed for this
problem. On each iteration, a linear problem with the incomplete lateral
Cauchy data is solved by the weighted Quasi-Reversibility Method, where the
weight is the Carleman\ Weight Function. This is the function, which is
involved as the weight in the Carleman estimate for the corresponding
parabolic operator. Convergence analysis ensures the global convergence of
this procedure. Numerical results demonstrate an accurate performance of
this technique for noisy data.
\end{abstract}

% REQUIRED

% REQUIRED
\begin{keywords}
  monitoring epidemics, coefficient inverse problem,
  Carleman estimate, convexification, numerical studies.
\end{keywords}

% REQUIRED
\begin{MSCcodes}
  35R30
\end{MSCcodes}

\section{Introduction}

\label{sec:1}

In 1927 Kermack and McKendrick have proposed a renowned mathematical model
of the spread of epidemics . Their model relies on a system of three coupled
nonlinear Ordinary Differential Equations. This system governs time
dependent propagations of the total number of: susceptible (S), infected (I)
and recovered (R) populations, i.e. it works with the so-called
\textquotedblleft SIR model". In 2021, Lee, Liu, Tembine, Li and Osher \cite%
{Lee} have modified the SIR model of \cite{Kermack} via introducing a system
of three nonlinear coupled parabolic Partial Differential Equations (PDEs).
We also call this model \textquotedblleft SIR system". An attractive point
of the model of \cite{Lee} is that it governs both spatial and time
dependencies of S,I,R populations. Hence, if the solution of this system is
found, then the spatial and temporal distributions of SIR populations are
monitored inside of an affected city.

However, a significant obstacle on this way is that coefficients of that
system are actually unknown. The current paper is devoted to an alleviation
of this obstacle. We solve the corresponding\ Coefficient Inverse Problem
(CIP) of the recovery of those coefficients. This solution can be considered
as the calibration step, although we simultaneously find both: unknown
coefficients and the solution of the SIR system.

Denote $\mathbf{x}=\left( x,y\right) \in \mathbb{R}^{2}$. Let $t>0$ be time.
The unknown coefficients of the SIR system of PDEs of \cite{Lee} are: the
infection rate $\beta \left( \mathbf{x},t\right) ,$ the recovery rate $%
\gamma \left( \mathbf{x},t\right) $ and three 2D vector functions
representing velocities of propagations of S,I,R populations. These
functions are: $q_{S}\left( \mathbf{x},t\right) ,q_{I}\left( \mathbf{x}%
,t\right) $ and $q_{R}\left( \mathbf{x},t\right) .$ In principle, all these
functions need to be computed as solutions of some CIPs. However, this task
is too difficult to handle at this point of time. Hence, we focus below on
computing only infection and recovery rates. If having just generic
dependencies of $\beta \left( \mathbf{x},t\right) $ and $\gamma \left( 
\mathbf{x},t\right) $ on three variables $\left( x,y,t\right) ,$ then we
would need to have input data to be dependent on three variables: otherwise
our input data, which are the boundary functions depending on two variables,
would be under-determined. The latter would most likely imply non-uniqueness
of our CIP. We point out that we work with the case of the data resulting
from a single measurement event, which is quite natural here.

Therefore, we assume below that $\beta =\beta \left( \mathbf{x}\right) $ and 
$\gamma =\gamma \left( \mathbf{x}\right) .$ In the reality of an epidemic,
this assumption might work well only on a small time interval. Then,
however, one can consider the case when $\beta \left( \mathbf{x},t\right) $
and $\gamma \left( \mathbf{x},t\right) $ are piecewise constant functions
with respect to $t$ on a sequence of time intervals:%
\begin{equation}
\left. 
\begin{array}{c}
0<t_{1}<t_{2}<...<t_{n}=T, \\ 
\beta \left( \mathbf{x},t\right) =\beta _{i}\left( \mathbf{x}\right) ,\text{ 
}\gamma \left( \mathbf{x},t\right) =\gamma _{i}\left( \mathbf{x}\right) ,%
\text{ }i=1,...,n-1.%
\end{array}%
\right.  \label{1.1}
\end{equation}%
Our method uses the knowledge of incomplete lateral Cauchy data for the SIR
system. \textquotedblleft Incomplete" in this case means that the Neumann
boundary condition is given on the entire lateral boundary of our domain of
interest, and the Dirichlet boundary condition is given only on a part of
that boundary. However, our numerical method does not need a knowledge of
the initial conditions of that system. Instead, it needs the knowledge of
the solution of the SIR system at a certain interior point of the time
interval we work with. An applied justification of the latter is given below
in this section. Therefore, if the solution of that system is known at a
sequence of interior points $\left\{ t=t_{0,i}\right\}
_{i=1}^{n-1},t_{0,i}\in \left( t_{i},t_{i+1}\right) ,$ then the method of
the current paper can be adjusted to this case. However, the case (\ref{1.1}%
) is outside of the scope of our publication. In addition, we assume below
that the velocities depend only on $\mathbf{x}$. This assumption can also be
adjusted to the $\left( \mathbf{x},t\right) -$dependence, similarly with the
above. Summarizing, we assume that 
\begin{equation}
\left. 
\begin{array}{c}
\beta =\beta \left( \mathbf{x}\right) ,\gamma =\gamma \left( \mathbf{x}%
\right) , \\ 
q_{S}=q_{S}\left( \mathbf{x}\right) ,q_{I}=q_{I}\left( \mathbf{x}\right)
,q_{R}=q_{R}\left( \mathbf{x}\right) .%
\end{array}%
\right.  \label{1.2}
\end{equation}

Any CIP for a PDE is both ill-posed and nonlinear. The ill-posedness and the
nonlinearity of CIPs cause the non-convexity of conventional least squares
cost functionals for CIPs, see, e.g. \cite{B1,B2,B3,Chavent,Gonch1,Gonch2}.
The non convexity, in turn causes the presence of local minima and ravines
in those functionals. Hence, convergence of any optimization technique for
such a functional can be rigorously guaranteed only if its starting point is
located in a sufficiently small neighborhood of the true solution, which
means local convergence. In general, it is unclear, however, how to figure
out such a neighborhood a priori.

We call a numerical method for a CIP \textquotedblleft globally convergent"
if its convergence to the true solution of that CIP is rigorously guaranteed
without an assumption of an advanced knowledge of a sufficiently small
neighborhood of that solution.

To address the problem of the construction of globally convergent numerical
methods for CIPs, the convexification concept was first proposed in \cite{KI}%
. The convexification is a numerical concept for CIPs, which is based on
Carleman estimates. Naturally, each new CIP requires its own version of the
convexification method, see, e.g. publications \cite{Bak,KL,Epid,SAR,KTR}
about various versions of the convexification method. These versions include
both the global convergence analysis and numerical studies. The global
convergence property is defined above. In particular, in \cite{Epid} the CIP
of this paper is solved numerically by the convexification method.

Although we study here the same CIP\ as the one in \cite{Epid}, there is a
significant difference between the approach of this paper and the one of 
\cite{Epid}. This is because we use here the so-called \textquotedblleft
Carleman contraction mapping method" (CCMM). The CCMM was originated in \cite%
{LN} and \cite{Nguyen}, also, see, e.g. \cite{Le,KN} for some follow up
publications. This method is applicable to some inverse source problems for
quasilinear PDEs, see, e.g. \cite{KN,LN,Nguyen}. It is also applicable to
some CIPs, see, e.g. \cite{Le}. In the CCMM,\ a certain nonlinear boundary
value problem associated with the original problem (see, section 3 for our
specific case) is solved numerically via an iterative procedure. On each
iterative step a weighted quadratic functional is minimized. It seems to be,
on the first glance, that such an approach should diverge. However, the key
element of the CCMM is the presence of the so-called Carleman Weight
Function (CWF) in that functional. The CWF is the function, which is
involved in the Carleman estimate for the corresponding PDE operator. The
presence of the CWF guarantees convergence of the process. That functional
is minimized via a version of the so-called Quasi-Reversibility Method
(QRM). The QRM\ was first proposed by Lattes and Lions \cite{LL}. The
version of the QRM, which requires the minimization of a quadratic
functional, was first proposed in \cite[section 2.5]{KT}, also see \cite[%
chapter 4]{KL}.

The convergence rate of the CCMM is the same as the one of the classical
contraction mapping, see Theorem 7.1 in section \ref{sec:7} for the main
convergence result of this paper. The latter explains the name of this
method. The idea of the proof of Theorem 7.1 is similar with the ideas of
proofs in \cite{LN,Nguyen}. Nevertheless, significant differences with \cite%
{LN,Nguyen} remain in this paper. They are caused by significant differences
between the underlying PDE operators. Therefore, it is necessary to provide
a complete proof of that theorem here. The CCMM converges globally in terms
of the above definition. Just as in the case of the convexification method,
each new application of the CCMM requires its own global convergence
analysis. The latter is the focal point of the analytical effort of this
paper.

\begin{remark} 
  \label{Remarks 1.1}
  \begin{enumerate}
  \item To simplify the presentation of solving our already difficult
  CIP, we assume that all functions representing the solution of our forward
  problem are sufficiently smooth.

  \item In addition to item 1, it is well known that the regularity
  assumptions are not of a significant concern in the theory of CIPs, see,
  e.g. \cite{Nov1,Nov2}, \cite[Theorem 4.1]{Rom}.
  \end{enumerate}
\end{remark}

All functions considered below are real valued ones. In section 2 we pose
both forward and inverse problems. In section 3 we describe our nonlinear
transformation procedure. In section 4 we formulate two estimates: a
Carleman estimate and an estimate of the Volterra integral, in which our CWF
is involved. In section 5 we formulate our version of the second generation
of the convexification method. In sections 6 and 7 we carry out the
convergence analysis. In particular, Theorems 7.1-7.4 of section 7 are the
main analytical results of this paper. In section 8 we present results of
our numerical experiments.

\section{Statements of Forward and Inverse Problems}

\label{sec:2}

In this section we pose both the forward problem and the inverse problem.
Let $m>1$ be an integer and $B$ be a Banach space with its norm $\left\Vert
\cdot \right\Vert _{B}$. Denote%
\begin{equation}
\left. 
\begin{array}{c}
B_{m}=B\times B\times ...\times B,\ m\mbox{ times,} \\ 
\left\Vert u\right\Vert _{B_{m}}=\left( \sum\limits_{k=1}^{m}\left\Vert
u_{k}\right\Vert _{B}^{2}\right) ^{1/2},\ \forall u=\left(
u_{1},...,u_{m}\right) ^{T}\in B_{m}.%
\end{array}%
\right.  \label{2.0}
\end{equation}

There is a general Carleman estimate for parabolic equations, in which the
CWF depends on two large parameters \cite[section 2.3]{KL}, \cite[\S 1 of
Chapter 4]{LRS}. This estimate is applicable to those CIPs for parabolic
PDEs, which have lateral Cauchy data at any small smooth part of the spatial
boundary. \ However, since the CWF in this case depends on two large
parameters, then it changes too rapidly. This, in turn makes it inconvenient
to work with such a CWF in numerical studies. Thus, we use a simpler CWF of 
\cite[formula (9.20)]{KL}, which depends on only one large parameter.
However, to use it, we need to assume that we solve our CIP in a rectangular
domain.

Let the number $\alpha $ be such that 
\begin{equation}
\alpha \in \left( 0,1/\sqrt{2}\right) .  \label{1}
\end{equation}
Let $a,b,A>0$ be some numbers. Denote 
\begin{equation}
\left. 
\begin{array}{c}
\Omega =\left\{ \mathbf{x}=\left( x,y\right) :a<x<b,\left\vert y\right\vert
<A\right\} ,\mbox{ } \\ 
\Gamma =\partial \Omega \cap \left\{ x=b\right\} , \\ 
Q_{T}=\Omega \times \left( 0,T\right) ,\mbox{ }S_{T}=\partial \Omega \times
\left( 0,T\right) ,\mbox{ }\Gamma _{T}=\Gamma \times \left( 0,T\right) , \\ 
Q_{\alpha T}=\Omega \times \left( T/2\left( 1-\alpha \right) ,T/2\left(
1+\alpha \right) \right) .%
\end{array}%
\right.  \label{2.1}
\end{equation}%
On the other hand, let $G\subset \mathbb{R}^{2}$ be a bounded domain with a
sufficiently smooth boundary and such that 
\begin{equation}
\Omega \subset G,\text{ }\partial \Omega \cap \partial G=\varnothing .
\label{2.01}
\end{equation}%
Denote%
\begin{equation}
G_{T}=G\times \left( 0,T\right) ,\text{ }S_{G_{T}}=\partial G\times \left(
0,T\right) .  \label{2.02}
\end{equation}

We now use notations of \cite{Lee}. Assume that (\ref{1.2}) holds. Also, let%
\begin{equation}
\beta ,\gamma \in C\left( \overline{G}\right) ;\text{ }q_{S},q_{I},q_{R}\in
C^{1}\left( \overline{G}\right) .  \label{2.03}
\end{equation}%
Let $\rho _{S}\left( \mathbf{x},t\right) ,\rho _{I}\left( \mathbf{x}%
,t\right) $ and $\rho _{R}\left( \mathbf{x},t\right) $ be S,I and R
populations respectively. The initial boundary value problem for the SIR
system of \cite[formulas (2.1)]{Lee} is: 
\begin{equation}
\partial _{t}\rho _{S}-\frac{\eta _{S}^{2}}{2}\Delta \rho _{S}+\mbox{div}%
\rho _{S}q_{S}+\beta \left( \mathbf{x}\right) \rho _{S}\rho _{I}=0,\ \left( 
\mathbf{x},t\right) \in G_{T},  \label{2.2}
\end{equation}%
\begin{equation}
\partial _{t}\rho _{I}-\frac{\eta _{I}^{2}}{2}\Delta \rho _{I}+\mbox{div}%
\left( \rho _{I}q_{I}\right) -\beta \left( \mathbf{x}\right) \rho _{S}\rho
_{I}=0,\ \left( \mathbf{x},t\right) \in G_{T},  \label{2.3}
\end{equation}%
\begin{equation}
\partial _{t}\rho _{R}-\frac{\eta _{R}^{2}}{2}\Delta \rho _{R}+\mbox{div}%
\left( \rho _{R}q_{R}\right) -\gamma \left( \mathbf{x}\right) \rho _{I}=0,\
\left( \mathbf{x},t\right) \in G_{T},  \label{2.4}
\end{equation}%
\begin{equation}
\partial _{n}\rho _{S}\mathbf{\mid }_{S_{G_{T}}}=g_{1}\left( \mathbf{x}%
,t\right) ,\ \partial _{n}\rho _{I}\mathbf{\mid }_{S_{G_{T}}}=g_{2}\left( 
\mathbf{x},t\right) ,\ \partial _{n}\rho _{R}\mathbf{\mid }%
_{S_{G_{T}}}=g_{3}\left( \mathbf{x},t\right) ,  \label{2.5}
\end{equation}%
\begin{equation}
\rho _{S}\left( \mathbf{x},0\right) =\rho _{S}^{0}\left( \mathbf{x}\right)
,\ \rho _{I}\left( \mathbf{x},0\right) =\rho _{I}^{0}\left( \mathbf{x}%
\right) ,\ \rho _{R}\left( \mathbf{x},0\right) =\rho _{R}^{0}\left( \mathbf{x%
}\right) ,\mathbf{x}\in G.  \label{2.6}
\end{equation}%
Here and below $\partial _{n}$ is the normal derivative. This is our forward
problem. Functions $g_{1},g_{2},g_{3}$ are fluxes of S,I,R populations
through the boundary $\partial G$ \cite{Lee}. Here, $\eta _{S}^{2},\eta
_{R}^{2},\eta _{R}^{2}>0$ are constant viscosity terms. To simplify the
presentation, we assume below that%
\begin{equation}
\frac{\eta _{S}^{2}}{2}\equiv \frac{\eta _{I}^{2}}{2}\equiv \frac{\eta
_{R}^{2}}{2}\equiv d>0,  \label{2.7}
\end{equation}%
where $d$ is a number. A more general case when identities (\ref{2.7}) are
not in place, can be considered along the same lines, although this topic is
outside of the scope of the current publication.

If system (\ref{2.2})-(\ref{2.4}) would be replaced with a single linear
parabolic equation, then Theorem 5.3 of \S 5 of Chapter 4 of the book \cite%
{Lad} would guarantee the existence and uniqueness of a sufficiently smooth
solution of the corresponding initial boundary value problem with the
Neumann boundary condition, provided that coefficients, initial and boundary
data satisfy certain non-restrictive conditions. However, since we now have
a system of coupled parabolic equations and these equations are nonlinear
ones, then basically nothing can be derived from the book \cite{Lad} about
existence of a sufficiently smooth solution of the initial boundary value
problem (\ref{2.2})-(\ref{2.6}). An improvement of the results of this book,
which is unlikely possible, is outside of the scope of this paper.

On the other hand, we truly want to solve the applied CIP posed below since
this solution might bring at least something of a value to the important
societal problem of monitoring of epidemics. Therefore, we just assume the
existence and uniqueness of the solution 
\begin{equation}
\left( \rho _{S},\rho _{I},\rho _{R}\right) \left( \mathbf{x},t\right) \in
C_{3}^{8,4}\left( \overline{G}_{T}\right)  \label{2.8}
\end{equation}%
of the forward problem (\ref{2.2})-(\ref{2.6}), see (\ref{2.0}) for the
subscript \textquotedblleft 3" in $C_{3}^{8,4}\left( \overline{G}_{T}\right)
.$ As to the $C_{3}^{8,4}\left( \overline{G}_{T}\right) -$smoothness, we
need it for our derivations and refer to Remarks 1.1 in section 1.

We now pose our CIP. It makes a little sense to measure the S,I,R functions
at $\left\{ t=0\right\} .$ Indeed, the epidemic process is not yet mature at
small times. Hence, it is unlikely that the authorities know about the
existence of an epidemic at $t\approx 0.$ Hence, unlike the forward problem,
we do not assume in our inverse problem the knowledge of initial conditions (%
\ref{2.6}) at $\left\{ t=0\right\} .$ Instead, to work with our CIP, we
assume the knowledge of the functions $\rho _{S},\rho _{I}$,$\rho _{R}$ at a
fixed moment of time $t_{0}\in \left( 0,T\right) .$ For convenience of
notations, we set $t_{0}=T/2.$ Thus, we assume below that the spatial
distributions of S,I,R populations inside of the affected city are conducted
at the moment of time $t=T/2.$ As to the boundary data for our CIP, we need
these data at $S_{T}$ and $\Gamma _{T},$ see (\ref{2.1}). This is because we
actually solve the CIP only inside the domain $\Omega ,$ see the paragraph
below (\ref{2.0}). Thus, it makes sense to assume that the boundary data for
our CIP are generated by the solution of of the forward problem (\ref{2.2})-(%
\ref{2.6}).

\textbf{Coefficient Inverse Problem (CIP).} \emph{Assume that conditions (%
\ref{1.2}), (\ref{2.1})-(\ref{2.03}) hold, coefficients }$\beta \left( 
\mathbf{x}\right) $\emph{\ and }$\gamma \left( \mathbf{x}\right) $\emph{\
are known for }$\mathbf{x}\in G\diagdown \Omega $\emph{\ and are unknown
inside of the domain }$\Omega .$\emph{\ In addition, assume that vector
functions }$q_{S},q_{I},q_{R}$\emph{\ are known. Let the vector function }$%
\left( \rho _{S},\rho _{I},\rho _{R}\right) \left( \mathbf{x},t\right) \in
C_{3}^{8,4}\left( \overline{G}_{T}\right) $\emph{\ (see (\ref{2.8})) be the
solution of the forward problem (\ref{2.2})-(\ref{2.6}). Let }%
\begin{equation}
\rho _{S}\left( \mathbf{x},\frac{T}{2}\right) =p_{1}\left( \mathbf{x}\right)
,\ \rho _{I}\left( \mathbf{x},\frac{T}{2}\right) =p_{2}\left( \mathbf{x}%
\right) ,\ \rho _{R}\left( \mathbf{x},\frac{T}{2}\right) =p_{3}\left( 
\mathbf{x}\right) ,\text{ }\mathbf{x}\in \Omega ,  \label{2.9}
\end{equation}%
\begin{equation}
\left. 
\begin{array}{c}
\partial _{n}\rho _{S}\left( \mathbf{x},t\right) \mid _{S_{T}}=r_{1}\left( 
\mathbf{x},t\right) ,\text{ }\partial _{n}\rho _{I}\left( \mathbf{x}%
,t\right) \mid _{S_{T}}=r_{2}\left( \mathbf{x},t\right) , \\ 
\partial _{n}\rho _{R}\left( \mathbf{x},t\right) \mid _{S_{T}}=r_{3}\left( 
\mathbf{x},t\right) ,%
\end{array}%
\right.  \label{2.10}
\end{equation}%
\begin{equation}
\left. 
\begin{array}{c}
\rho _{S}\mid _{\Gamma _{T}}=f_{1}\left( y,t\right) ,\rho _{I}\mid _{\Gamma
_{T}}=f_{2}\left( y,t\right) , \\ 
\rho _{S}\mid _{\Gamma _{T}}=f_{3}\left( y,t\right) ,\left( y,t\right) \in
\Gamma _{T}.%
\end{array}%
\right.  \label{2.11}
\end{equation}%
\emph{Assume that functions in the right hand sides of (\ref{2.9})-(\ref%
{2.11}) are known. In addition, assume that there exists a number }$c>0$%
\emph{\ such that}{\ }%
\begin{equation}
\left\vert p_{1}\left( \mathbf{x}\right) \right\vert ,\left\vert p_{2}\left( 
\mathbf{x}\right) \right\vert \geq c,\ \mathbf{x}\in \overline{\Omega }.
\label{2.12}
\end{equation}%
\emph{At the same time, a knowledge of initial conditions (\ref{2.11}) is
not assumed. Determine the infection rate }$\beta \left( \mathbf{x}\right) $%
\emph{\ and the recovery rate }$\gamma \left( \mathbf{x}\right) $\emph{\ for}%
{\ }$\mathbf{x}\in \Omega .$

In the case of a single linear parabolic equation, one can sometimes impose
sufficient conditions to guarantee an analog of (\ref{2.12}). This is
usually done using the maximum principle for parabolic equations \cite[\S 2
of Chapter 1]{Lad}. However, since we have the system (\ref{2.2})-(\ref{2.4}%
) of nonlinear equations, then it is unlikely that a proper sufficient
condition guaranteeing (\ref{2.12}) can be imposed. \ Thus, we simply assume
the validity of (\ref{2.12}). We note that condition (\ref{2.12}) has a
clear physical meaning: it tells one that the S and I populations exceed a
certain number at the time when measurements inside of the affected city are
conducted.

\section{Transformation}

\label{sec:3}

The first step of both first and second generations of the convexification
method is a transformation procedure. In our specific case, this procedure
transforms the original CIP into such a system of six coupled nonlinear
integral differential equations with Volterra integral operators in them,
which does not contain unknown coefficients.

Set $t=T/2$ in (\ref{2.2}) and (\ref{2.4}). Using (\ref{2.9}) and (\ref{2.12}%
), we obtain%
\begin{equation}
\beta \left( \mathbf{x}\right) =-\frac{1}{\left( p_{1}p_{2}\right) \left( 
\mathbf{x}\right) }\partial _{t}\rho _{S}\left( \mathbf{x},\frac{T}{2}%
\right) +  \label{3.2}
\end{equation}%
\begin{equation*}
\frac{1}{\left( p_{1}p_{2}\right) \left( \mathbf{x}\right) }\left[ c\Delta
p_{1}\left( \mathbf{x}\right) -\mbox{div}\left( p_{1}q_{S}\right) \left( 
\mathbf{x}\right) \right] ,
\end{equation*}%
\begin{equation}
\gamma \left( \mathbf{x}\right) =\frac{1}{p_{2}\left( \mathbf{x}\right) }%
\partial _{t}\rho _{R}\left( \mathbf{x},\frac{T}{2}\right) -\frac{1}{%
p_{2}\left( \mathbf{x}\right) }\left[ c\Delta p_{3}\left( \mathbf{x}\right) -%
\mbox{div}\left( p_{3}q_{R}\right) \left( \mathbf{x}\right) \right] .
\label{3.3}
\end{equation}%
Denote 
\begin{equation}
\left. 
\begin{array}{c}
v_{1}\left( \mathbf{x,}t\right) =\partial _{t}\rho _{S}\left( \mathbf{x,}%
t\right) ,v_{2}\left( \mathbf{x,}t\right) =\partial _{t}\rho _{I}\left( 
\mathbf{x,}t\right) ,v_{3}\left( \mathbf{x,}t\right) =\partial _{t}\rho
_{R}\left( \mathbf{x,}t\right) , \\ 
V\left( \mathbf{x,}t\right) =\left( v_{1},v_{2},v_{3}\right) ^{T}\left( 
\mathbf{x,}t\right) .%
\end{array}%
\right.  \label{3.4}
\end{equation}%
By (\ref{2.9}) and (\ref{3.4}) 
\begin{equation}
\left. 
\begin{array}{c}
\rho _{S}\left( \mathbf{x,}t\right) =\int\limits_{T/2}^{t}v_{1}\left( 
\mathbf{x,}\tau \right) d\tau +p_{1}\left( \mathbf{x}\right) ,\ \left( 
\mathbf{x,}t\right) \in Q_{T}, \\ 
\rho _{I}\left( \mathbf{x,}t\right) =\int\limits_{T/2}^{t}v_{2}\left( 
\mathbf{x,}\tau \right) d\tau +p_{2}\left( \mathbf{x}\right) ,\ \left( 
\mathbf{x,}t\right) \in Q_{T}, \\ 
\rho _{R}\left( \mathbf{x,}t\right) =\int\limits_{T/2}^{t}v_{3}\left( 
\mathbf{x,}\tau \right) d\tau +p_{3}\left( \mathbf{x}\right) ,\ \left( 
\mathbf{x,}t\right) \in Q_{T}.%
\end{array}%
\right.  \label{3.5}
\end{equation}%
Since by (\ref{3.4})%
\begin{equation*}
\left. 
\begin{array}{c}
\partial _{t}\rho _{S}\left( \mathbf{x,}T/2\right) =v_{1}\left( \mathbf{x,}%
T/2\right) =v_{1}\left( \mathbf{x,}t\right) -\int\limits_{T/2}^{t}\partial
_{t}v_{1}\left( \mathbf{x,}\tau \right) d\tau , \\ 
\partial _{t}\rho _{R}\left( \mathbf{x,}T/2\right) =v_{3}\left( \mathbf{x,}%
T/2\right) =v_{3}\left( \mathbf{x,}t\right) -\int\limits_{T/2}^{t}\partial
_{t}v_{3}\left( \mathbf{x,}\tau \right) d\tau ,%
\end{array}%
\right.
\end{equation*}%
then (\ref{3.2})-(\ref{3.5}) imply:%
\begin{equation}
\left. 
\begin{array}{c}
\beta \left( \mathbf{x}\right) =\left( v_{1}\left( \mathbf{x,}t\right)
-\int\limits_{T/2}^{t}\partial _{t}v_{1}\left( \mathbf{x,}\tau \right) d\tau
\right) s_{1}\left( \mathbf{x}\right) +s_{2}\left( \mathbf{x}\right) , \\ 
\gamma \left( \mathbf{x}\right) =\left( v_{3}\left( \mathbf{x,}t\right)
-\int\limits_{T/2}^{t}\partial _{t}v_{3}\left( \mathbf{x,}\tau \right) d\tau
\right) s_{3}\left( \mathbf{x}\right) +s_{4}\left( \mathbf{x}\right) , \\ 
s_{1}\left( \mathbf{x}\right) =-\left[ \left( p_{1}p_{2}\right) \left( 
\mathbf{x}\right) \right] ^{-1}, \\ 
s_{2}\left( \mathbf{x}\right) =-s_{1}\left( \mathbf{x}\right) \left[ c\Delta
p_{1}\left( \mathbf{x}\right) -\mbox{div}\left( p_{1}q_{S}\right) \left( 
\mathbf{x}\right) \right] , \\ 
s_{3}\left( \mathbf{x}\right) =1/p_{2}\left( \mathbf{x}\right) ,\  \\ 
s_{4}\left( \mathbf{x}\right) =s_{3}\left( \mathbf{x}\right) \left[ c\Delta
p_{3}\left( \mathbf{x}\right) -\mbox{div}\left( p_{3}q_{R}\right) \left( 
\mathbf{x}\right) \right] .%
\end{array}%
\right.  \label{3.6}
\end{equation}%
Differentiate (\ref{2.2})-(\ref{2.4}), (\ref{2.10}) and (\ref{2.11}) with
respect to $t$ and use (\ref{2.7}) and (\ref{3.4})-(\ref{3.6}). We obtain
the system of three coupled nonlinear integral differential equations with
incomplete lateral Cauchy data and Volterra integrals with respect to $t$:%
\begin{equation}
\partial _{t}V-d\Delta V+\left( \mathop{\rm div}\nolimits\left(
v_{1}q_{S}\right) ,\mathop{\rm div}\nolimits\left( v_{2}q_{I}\right) ,%
\mathop{\rm div}\nolimits\left( v_{3}q_{I}\right) \right) ^{T}+P\left(
V\right) =0\text{ in }Q_{T},  \label{3.7}
\end{equation}%
\begin{equation}
\partial _{n}V\mid _{S_{T}}=\left( \partial _{t}r_{1},\partial
_{t}r_{2},\partial _{t}r_{3}\right) ^{T}\left( \mathbf{x},t\right) ,\ V\mid
_{\Gamma _{T}}=\left( \partial _{t}f_{1},\partial _{t}f_{2},\partial
_{t}f_{3}\right) ^{T}\left( y,t\right) .  \label{3.8}
\end{equation}%
In (\ref{3.7}), we have singled out the linear part of the matrix
differential operator. The 3D vector function $P$ is nonlinear with respect
to $V,$%
\begin{equation}
P\left( V\right) =P\left( V,\nabla V,\int\limits_{T/2}^{t}V\left( \mathbf{x}%
,\tau \right) d\tau ,\int\limits_{T/2}^{t}\partial _{t}V\left( \mathbf{x}%
,\tau \right) d\tau ,S\left( \mathbf{x}\right) \right) ,  \label{3.9}
\end{equation}%
\begin{equation}
S=\left( s_{1},s_{2},s_{3},s_{4}\right) ^{T}\left( \mathbf{x}\right) \in
C_{4}^{6}\left( \overline{\Omega }\right) ,  \label{3.90}
\end{equation}%
where functions $s_{i}\left( \mathbf{x}\right) ,$ $i=1,...,4$ are defined in
(\ref{3.6}). The 3D vector function $P$ is 
\begin{equation}
P=P\left( x_{1},...,x_{19}\right) \in C_{3}^{2}\left( \mathbb{R}^{19}\right)
.  \label{3.10}
\end{equation}%
Thus, we have obtained the system of three coupled nonlinear integral
differential equations (\ref{3.7}) with the lateral incomplete Cauchy data (%
\ref{3.8}) and condition (\ref{3.9}). We observe that the unknown
coefficients $\beta \left( \mathbf{x}\right) $ and $\gamma \left( \mathbf{x}%
\right) $ are not involved in system (\ref{3.7}), which is exactly the goal
of our transformation procedure.

However, the presence in (\ref{3.9}) of integral terms containing the $t-$%
derivative $\partial _{t}V\left( \mathbf{x},\tau \right) $ makes the
convergence analysis inconvenient. Therefore, we differentiate (\ref{3.7}), (%
\ref{3.8}) with respect to $t$ and denote 
\begin{equation}
W\left( \mathbf{x},t\right) =\left( V,V_{t}\right) ^{T}\left( \mathbf{x}%
,t\right) =\left( v_{1},v_{2},v_{3},v_{1t},v_{2t},v_{3t}\right) ^{T}\left( 
\mathbf{x},t\right) .  \label{3.11}
\end{equation}%
The system (\ref{3.7}) becomes%
\begin{equation}
\left. 
\begin{array}{c}
\partial _{t}W-d\Delta W+ \\ 
+\left( \mathop{\rm div}\nolimits\left( v_{1}q_{S}\right) ,\mathop{\rm div}%
\nolimits\left( v_{2}q_{I}\right) ,\mathop{\rm div}\nolimits\left(
v_{3}q_{I}\right) ,\mathop{\rm div}\nolimits\left( v_{1t}q_{S}\right) ,%
\mathop{\rm div}\nolimits\left( v_{2t}q_{I}\right) ,\mathop{\rm div}%
\nolimits\left( v_{3t}q_{I}\right) \right) ^{T}+ \\ 
+Y\left( W,S\right) =0\text{ in }Q_{T},%
\end{array}%
\right.  \label{3.12}
\end{equation}%
\begin{equation}
Y\left( W,S\right) =Y\left( W,\int\limits_{T/2}^{t}W\left( \mathbf{x},\tau
\right) d\tau ,\nabla W,\int\limits_{T/2}^{t}\nabla W\left( \mathbf{x},\tau
\right) d\tau ,S\left( \mathbf{x}\right) \right) .  \label{3.13}
\end{equation}%
The boundary conditions for $W$ are generated by (\ref{3.8}) and (\ref{3.11}%
):%
\begin{equation}
\left. 
\begin{array}{c}
W\mid _{\Gamma _{T}}=G_{0}\left( y,t\right) =\left( \partial
_{t}f_{1},\partial _{t}f_{2},\partial _{t}f_{3},\partial
_{t}^{2}f_{1},\partial _{t}^{2}f_{2},\partial _{t}^{2}f_{3}\right)
^{T}\left( y,t\right) , \\ 
\partial _{n}W\mid _{S_{T}}=G_{1}\left( \mathbf{x},t\right) =\left( \partial
_{t}r_{1},\partial _{t}r_{2},\partial _{t}r_{3},\partial
_{t}^{2}r_{1},\partial _{t}^{2}r_{2},\partial _{t}^{2}r_{3}\right)
^{T}\left( \mathbf{x},t\right) .%
\end{array}%
\right.  \label{3.14}
\end{equation}%
By (\ref{3.90}) and (\ref{3.10}) 
\begin{equation}
Y=Y\left( x_{1},...,x_{40}\right) \in C_{6}^{2}\left( \mathbb{R}^{40}\right)
.  \label{3.15}
\end{equation}

The transformation procedure ends up with problem (\ref{3.11})-(\ref{3.15}).
We focus below on the numerical solution of this problem. Suppose for a
moment that problem (\ref{3.11})-(\ref{3.15}) is solved. Then the target
coefficients $\beta \left( \mathbf{x}\right) $ and $\gamma \left( \mathbf{x}%
\right) $ should be reconstructed via backwards calculations using formulas (%
\ref{3.11}), (\ref{3.4}) and (\ref{3.6}) sequentially.

\section{Two Estimates}

\label{sec:4}

As stated in section 2, we use the CWF of \cite[formula (9.20)]{KL}, which
depends only on a single large parameter. Our CWF is%
\begin{equation}
\varphi _{\lambda }\left( \mathbf{x},t\right) =\exp \left[ 2\lambda \left(
x^{2}-\left( t-T/2\right) ^{2}\right) \right] ,  \label{4.1}
\end{equation}%
where $\lambda \geq 1$ is a large parameter. By (\ref{2.1}) and (\ref{4.1})%
\begin{equation}
\left. 
\begin{array}{c}
\varphi _{\lambda }\left( \mathbf{x},t\right) \leq \exp \left[ 2\lambda
\left( b^{2}-\left( t-T/2\right) ^{2}\right) \right] \mbox{
in }Q_{T}, \\ 
\varphi _{\lambda }\left( \mathbf{x},t\right) \geq \exp \left[ -\lambda
\alpha ^{2}T^{2}/2\right] \mbox{ in }Q_{\alpha T}.%
\end{array}%
\right.  \label{4.2}
\end{equation}%
Let $H_{0}^{2,1}\left( Q_{T}\right) $ the subspace of the space $%
H^{2,1}\left( Q_{T}\right) $ defined as:%
\begin{equation}
H_{0}^{2,1}\left( Q_{T}\right) =\left\{ u\in H^{2,1}\left( Q_{T}\right)
:\partial _{n}u\mid _{S_{T}}=0,\ u\mid _{\Gamma _{T}}=0\right\} .
\label{4.3}
\end{equation}

\begin{theorem}[Carleman estimate for the operator $\partial
  _{t}-d\Delta $ - Theorem 9.4.1 in \cite{KL}]\label{Theorem 4.1} {Let }$d>0${\ be the
number in (\ref{2.7}). There exists a sufficiently large number }$\lambda
_{0}=\lambda _{0}\left( Q_{T},d\right) \geq 1${\ and a number }$%
C=C\left( Q_{T},d\right) >0,${\ both depending only on listed
parameters, such that the following Carleman estimate holds:}%
\begin{equation}
  \left. 
  \begin{array}{c}
  \int\limits_{Q_{T}}\left( u_{t}-d\Delta u\right) ^{2}\varphi _{\lambda }d%
  \mathbf{x}dt\geq C\int\limits_{Q_{T}}\left( \lambda \left( \nabla u\right)
  ^{2}+\lambda ^{3}u^{2}\right) \varphi _{\lambda }d\mathbf{x}dt- \\ 
  -C\left( \left\Vert u\left( \mathbf{x},T\right) \right\Vert _{H^{1}\left(
  \Omega \right) }^{2}+\left\Vert u\left( \mathbf{x},0\right) \right\Vert
  _{H^{1}\left( \Omega \right) }^{2}\right) \lambda ^{2}\exp \left( -2\lambda
  \left( T^{2}/4-b^{2}\right) \right) , \\ 
  \forall u\in H_{0}^{2,1}\left( Q_{T}\right) ,\ \forall \lambda \geq \lambda
  _{0}.%
  \end{array}%
  \right.   \label{4.30}
  \end{equation}
\end{theorem}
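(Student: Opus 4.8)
The plan is to establish the pointwise Carleman estimate for the single scalar parabolic operator $\partial_t - d\Delta$ with the Carleman Weight Function $\varphi_\lambda(\mathbf{x},t) = \exp[2\lambda(x^2-(t-T/2)^2)]$, and then integrate over $Q_T$. The weight is crucial: its spatial part $\exp(2\lambda x^2)$ is convex and strictly increasing in $x$ on $(a,b)$, while its temporal part $\exp(-2\lambda(t-T/2)^2)$ peaks at $t=T/2$ and decays symmetrically toward $t=0$ and $t=T$. This structure is exactly what forces the boundary terms at $\{t=0\}$ and $\{t=T\}$ to carry the small factor $\exp(-2\lambda(T^2/4-b^2))$ appearing on the right-hand side of \eqref{4.30}.

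First I would introduce the substitution $w = \varphi_\lambda^{1/2}\, u$, so that $u = \varphi_\lambda^{-1/2} w$ and the weighted quantity $(u_t-d\Delta u)^2\varphi_\lambda$ becomes $(\mathcal{L}_\lambda w)^2$ for a conjugated operator $\mathcal{L}_\lambda = \varphi_\lambda^{1/2}(\partial_t - d\Delta)\varphi_\lambda^{-1/2}$. Computing this conjugation produces first- and zeroth-order terms with explicit powers of $\lambda$ coming from the derivatives of $\varphi_\lambda^{1/2}$; the key gradient terms scale like $\lambda$ and the zeroth-order term like $\lambda^3$, which matches the claimed lower bound $C\int_{Q_T}(\lambda(\nabla u)^2 + \lambda^3 u^2)\varphi_\lambda$. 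Next I would split $\mathcal{L}_\lambda w$ into its formally symmetric and antisymmetric parts, $\mathcal{L}_\lambda = \mathcal{L}_s + \mathcal{L}_a$, and expand $(\mathcal{L}_\lambda w)^2 \ge 2\,\mathcal{L}_s w\,\mathcal{L}_a w$. The cross term $2\,\mathcal{L}_s w\,\mathcal{L}_a w$ is where the positivity is generated: after integrating by parts in both $\mathbf{x}$ and $t$, the leading divergence-free contributions combine into a positive quadratic form in $w$ and $\nabla w$ (with the correct $\lambda$ and $\lambda^3$ weights), plus boundary integrals over $S_T$, over $\Gamma_T$, and over the temporal caps $\{t=0\}$ and $\{t=T\}$.

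The main obstacle, and the step I expect to require the most care, is the treatment of the boundary terms. Because $u \in H_0^{2,1}(Q_T)$ (see \eqref{4.3}) satisfies $\partial_n u|_{S_T}=0$ and $u|_{\Gamma_T}=0$, the lateral boundary integrals over $S_T$ and $\Gamma_T$ should either vanish or have a favorable sign after using that $\partial_x(x^2)=2x>0$ on the relevant part of $\partial\Omega$; verifying that the remaining lateral contributions are nonnegative (exploiting the geometry of the rectangle $\Omega$ and the condition $\alpha\in(0,1/\sqrt{2})$ from \eqref{1}) is the delicate part. The temporal caps at $\{t=0\}$ and $\{t=T\}$ cannot be discarded, since $u$ need not vanish there; instead I would estimate them in $H^1(\Omega)$ and bound $\varphi_\lambda$ on these faces using the first inequality in \eqref{4.2}, namely $\varphi_\lambda \le \exp[2\lambda(b^2 - (t-T/2)^2)]$, which at $t=0,T$ gives precisely the factor $\exp(-2\lambda(T^2/4 - b^2))$ with the polynomial prefactor $\lambda^2$ absorbing the lower-order derivatives of the weight. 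Finally, choosing $\lambda_0$ large enough that all error terms generated by the $C^1$ coefficients and the lower-order remainders of the conjugation are dominated by the positive principal part completes the estimate, yielding \eqref{4.30} with constants $C$ and $\lambda_0$ depending only on $Q_T$ and $d$.
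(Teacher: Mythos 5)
You should first be aware that the paper does not actually prove Theorem 4.1: the estimate is imported from \cite[Theorem 9.4.1]{KL}, and the authors only remark that the proof given there (for the boundary conditions $u\mid_{S_T}=0$, $u_x\mid_{\Gamma_T}=0$) carries over to the conditions \eqref{4.3}, pointing to formulas (9.100) and (9.103) of \cite{KL}. So a self-contained proof like the one you outline is necessarily a different route, and it has to stand on its own. As written, it does not, because its central positivity claim fails for this particular weight.

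Write $\varphi_\lambda=e^{2\lambda\psi}$ with $\psi(\mathbf{x},t)=x^2-(t-T/2)^2$ and $w=e^{\lambda\psi}u$. The spatial Hessian of $\psi$ is $\diag(2,0)$, i.e.\ degenerate in $y$, because the weight does not depend on $y$ at all (this is exactly the feature that ties this CWF to rectangular domains). Take the natural splitting $\mathcal{L}_s w=-d\Delta w-d\lambda^2|\nabla\psi|^2w-\lambda\psi_t w$ and $\mathcal{L}_a w=w_t+2d\lambda\nabla\psi\cdot\nabla w+d\lambda\,\Delta\psi\,w$, and expand $2\int_{Q_T}\mathcal{L}_s w\,\mathcal{L}_a w\,d\mathbf{x}dt$. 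The only order-$\lambda$ gradient terms are $4d^2\lambda\int\bigl(\mathrm{Hess}\,\psi\bigr)(\nabla w,\nabla w)-2d^2\lambda\int\Delta\psi\,|\nabla w|^2$ from the transport part, plus $4d^2\lambda\int|\nabla w|^2$ from the $\Delta\psi$ part; for this $\psi$ these sum to $8d^2\lambda\int w_x^2$ --- the $w_y^2$ contributions cancel identically. (Similarly, the zeroth-order term comes out proportional to $\lambda^3\int x^2w^2$, positive only because $x\geq a>0$ in $\Omega$, a fact your sketch never invokes.) Hence your step ``the leading divergence-free contributions combine into a positive quadratic form in $w$ and $\nabla w$ with the correct $\lambda$ and $\lambda^3$ weights'' is false: the cross term controls $w_x$ but not $w_y$, and the lower bound in \eqref{4.30} on $\lambda\int(\nabla u)^2\varphi_\lambda$ does not follow from it. The missing idea is a separate pairing step: integrate $(u_t-d\Delta u)\cdot\lambda u\,\varphi_\lambda$ by parts to generate $+d\lambda\int|\nabla u|^2\varphi_\lambda$, bound this pairing by Cauchy--Schwarz against $\int(u_t-d\Delta u)^2\varphi_\lambda$, and absorb the error terms ($C\lambda^2\int u^2\varphi_\lambda$ plus caps at $t=0,T$) into the $\lambda^3\int u^2\varphi_\lambda$ bound already obtained from the cross term; only then do both terms of the claimed lower bound appear. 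A smaller but telling point: your appeal to the condition $\alpha\in(0,1/\sqrt{2})$ from \eqref{1} in the treatment of the lateral boundary terms is spurious --- $\alpha$ plays no role in Theorem 4.1; it only defines the subdomain $Q_{\alpha T}$ used later in Theorems 7.3 and 7.4. The lateral terms are instead disposed of by $\partial_n\psi=0$ on $\{y=\pm A\}$, by $u=u_x=u_y=u_t=0$ on $\Gamma_T$, and by a sign/absorption analysis on $\{x=a\}$, which is precisely what the cited formulas (9.100) and (9.103) of \cite{KL} supply.
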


This Carleman estimate is proven in \cite[Theorem 9.4.1]{KL} for the case
when boundary conditions (\ref{4.3}) are replaced with $u\mid _{S_{T}}=0,$ $%
u_{x}\mid _{\Gamma _{T}}=0.$ However, it easily follows from that proof that
the same result is valid for boundary conditions (\ref{4.3}), see formulas
(9.100) and (9.103) of \cite{KL}. We also need an estimate of the Volterra
integral, in which the CWF (\ref{4.1}) is involved. Theorem 4.2 is proven in 
\cite[Lemma 3.1.1]{KL}.

\begin{theorem}[An estimate for the Volterra integral operator] \label{Theorem 4.2} {The following Carleman estimate is valid:}%
  \begin{eqnarray}
  &&\int\limits_{Q_{T}}\left( \int\limits_{T/2}^{t}f\left( \mathbf{x},\tau
  \right) d\tau \right) ^{2}\varphi _{\lambda }\left( \mathbf{x},t\right) d%
  \mathbf{x}dt\leq \frac{C}{\lambda }\int\limits_{Q_{T}}f^{2}\left( \mathbf{x}%
  ,t\right) \varphi _{\lambda }\left( \mathbf{x},t\right) d\mathbf{x}dt,
  \label{4.4} \\
  &&\hspace{3cm}\forall \lambda >0,\forall f\in L_{2}\left( Q_{T}\right) , 
  \notag
  \end{eqnarray}%
  {where the number }$C=C\left( T,\alpha \right) >0${\ depends only
  on listed parameters.}
  \end{theorem}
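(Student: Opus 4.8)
The plan is to exploit the product structure of the Carleman Weight Function. By (\ref{4.1}) we factor $\varphi_{\lambda}\left(\mathbf{x},t\right)=e^{2\lambda x^{2}}\,e^{-2\lambda\left(t-T/2\right)^{2}}$, and the spatial factor $e^{2\lambda x^{2}}$ is independent of $t$. Hence it suffices to establish the one-dimensional (in time) inequality, for each fixed $\mathbf{x}$,
\[
\int_{0}^{T}\Bigl(\int_{T/2}^{t}f\left(\mathbf{x},\tau\right)d\tau\Bigr)^{2}e^{-2\lambda\left(t-T/2\right)^{2}}\,dt\le\frac{C}{\lambda}\int_{0}^{T}f^{2}\left(\mathbf{x},t\right)e^{-2\lambda\left(t-T/2\right)^{2}}\,dt,
\]
and then to multiply through by $e^{2\lambda x^{2}}$ and integrate over $\Omega$. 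Since the spatial weight appears identically on both sides, the constant is unchanged and (\ref{4.4}) follows with the same $C$.

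For the one-dimensional estimate I would substitute $s=t-T/2$, writing $\tilde f(s)=f\left(\mathbf{x},s+T/2\right)$ and $G(s)=\int_{0}^{s}\tilde f(\sigma)\,d\sigma$, so that the weight becomes $e^{-2\lambda s^{2}}$ on $\left(-T/2,T/2\right)$ and $G(0)=0$. Treating the ranges $s>0$ and $s<0$ separately, I would apply the plain Cauchy--Schwarz inequality to the inner integral in the form $G^{2}(s)\le|s|\,\bigl|\int_{0}^{s}\tilde f^{2}(\sigma)\,d\sigma\bigr|$. The point of using the \emph{unweighted} Cauchy--Schwarz here, rather than a symmetric split that distributes a factor $e^{-\lambda\sigma^{2}}$ onto $\tilde f$, is that the latter forces a bound on $\int_{0}^{s}e^{2\lambda\sigma^{2}}\,d\sigma$, whose $1/s$ singularity makes the subsequent integration diverge and destroys the gain in $\lambda$; keeping the Gaussian weight intact is what makes the argument work.

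Next I would multiply by $e^{-2\lambda s^{2}}$, integrate in $s$, and apply Tonelli's theorem to interchange the order of integration over the triangle $0\le\sigma\le s\le T/2$. The inner $s$-integral is then elementary and exact,
\[
\int_{\sigma}^{T/2}s\,e^{-2\lambda s^{2}}\,ds=\frac{1}{4\lambda}\bigl(e^{-2\lambda\sigma^{2}}-e^{-2\lambda\left(T/2\right)^{2}}\bigr)\le\frac{1}{4\lambda}\,e^{-2\lambda\sigma^{2}},
\]
which is precisely the step that produces the factor $1/\lambda$. This yields $\int_{0}^{T/2}G^{2}e^{-2\lambda s^{2}}\,ds\le\tfrac{1}{4\lambda}\int_{0}^{T/2}\tilde f^{2}e^{-2\lambda s^{2}}\,ds$, and the range $s<0$ is handled identically after the reflection $s\mapsto-s,\ \sigma\mapsto-\sigma$. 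Adding the two contributions, undoing the substitution, and integrating in $\mathbf{x}$ gives (\ref{4.4}); in fact the computation delivers the explicit universal value $C=1/4$, valid for every $\lambda>0$, which in particular depends only on the listed parameters.

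The argument is essentially routine, and I do not expect a serious obstacle. The only genuinely delicate point is the choice of the Cauchy--Schwarz split in the second step: the clean unweighted split, followed by the exact Gaussian moment computation above, is what simultaneously keeps every integral finite and extracts the advertised $\lambda^{-1}$ decay.
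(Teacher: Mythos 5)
Your proof is correct, and every step checks out: the factorization $\varphi _{\lambda }\left( \mathbf{x},t\right) =e^{2\lambda x^{2}}e^{-2\lambda \left( t-T/2\right) ^{2}}$ legitimately reduces (\ref{4.4}) to a one-dimensional inequality in $t$ for a.e.\ fixed $\mathbf{x}$ (for $f\in L_{2}\left( Q_{T}\right) $, Fubini gives $f\left( \mathbf{x},\cdot \right) \in L_{2}\left( 0,T\right) $ a.e., and the spatial factor passes through both sides unchanged); the unweighted Cauchy--Schwarz bound, Tonelli on the triangle $0\leq \sigma \leq s\leq T/2$, and the exact primitive
\begin{equation*}
\int_{\sigma }^{T/2}se^{-2\lambda s^{2}}ds=\frac{1}{4\lambda }\left(
e^{-2\lambda \sigma ^{2}}-e^{-2\lambda T^{2}/4}\right) \leq \frac{1}{4\lambda }
e^{-2\lambda \sigma ^{2}}
\end{equation*}
(i.e.\ the observation that $se^{-2\lambda s^{2}}$ is, up to the factor $-1/\left( 4\lambda \right) $, the derivative of the temporal weight) is exactly what produces the $1/\lambda $ gain, and the reflection handles $s<0$. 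The main point of comparison is that the paper contains no proof of this statement at all: it defers entirely to Lemma 3.1.1 of \cite{KL}, so your argument supplies the self-contained proof that the paper omits; it is the standard Cauchy--Schwarz/Fubini mechanism one would expect to find in that reference. Your proof also yields slightly more than the theorem asserts: the explicit constant $C=1/4$ is independent not only of $\lambda $ but of $T$ and $\alpha $ as well (the stated dependence $C=C\left( T,\alpha \right) $ is vacuous here, since $\alpha $ enters the paper only through $Q_{\alpha T}$, which plays no role in (\ref{4.4})), and the estimate holds for every $\lambda >0$ with no largeness threshold, exactly as claimed. Your side remark about why the weighted Cauchy--Schwarz split is the wrong move is also essentially right: the crude bound $\int_{0}^{s}e^{2\lambda \sigma ^{2}}d\sigma \leq se^{2\lambda s^{2}}$ keeps all integrals finite but forfeits the factor $1/\lambda $, while the sharper bound of order $\left( \lambda s\right) ^{-1}e^{2\lambda s^{2}}$ is valid only for $s\gtrsim \lambda ^{-1/2}$, so that route requires an extra case split that your unweighted version avoids.
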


\section{Numerical Method for Problem (\protect\ref{3.11})-(\protect\ref%
{3.14})}

\label{sec:5}

\subsection{Sets}

\label{sec:5.1}

The smoothness requirement for $W,$ which our method needs, is $W\in
C_{6}^{2}\left( \overline{Q}_{T}\right) .$ On the other hand, by (\ref{2.8}%
), (\ref{3.4}) and (\ref{3.11}) $W\in C_{6}^{4,2}\left( \overline{Q}%
_{T}\right) \subset C_{6}^{2}\left( \overline{Q}_{T}\right) .$ By the
embedding theorem 
\begin{equation}
H_{6}^{4}\left( Q_{T}\right) \subset C_{6}^{2}\left( \overline{Q}_{T}\right)
,\mbox{
	}\left\Vert u\right\Vert _{C_{6}^{2}\left( \overline{Q}_{T}\right) }\leq
C\left\Vert u\right\Vert _{H_{6}^{4}\left( Q_{T}\right) },\ \forall u\in
H_{6}^{4}\left( Q_{T}\right) ,  \label{5.1}
\end{equation}%
and $H_{6}^{4}\left( Q_{T}\right) $ is dense in $C_{6}^{2}\left( \overline{Q}%
_{T}\right) $ and compactly embedded in $C_{6}^{2}\left( \overline{Q}%
_{T}\right) $ in terms of the norm of the space $C_{6}^{2}\left( \overline{Q}%
_{T}\right) .$ The number $C=C\left( Q_{T}\right) $ in (\ref{5.1})depends
only on the domain $Q_{T}.$ We define the subspace $H_{6,0}^{4}\left(
Q_{T}\right) $ of the space $H_{6}^{4}\left( Q_{T}\right) $ as:%
\begin{equation}
H_{6,0}^{4}\left( Q_{T}\right) =\left\{ W\in H_{6}^{4}\left( Q_{T}\right)
:\partial _{n}W\mid _{S_{T}}=0,W\mid _{\Gamma _{T}}=0\right\} .  \label{6.1}
\end{equation}%
Let $M>0$ be an arbitrary number. Introduce two sets $B\left( M\right) $ and 
$B_{0}\left( M\right) ,$ 
\begin{equation}
B\left( M\right) =\left\{ 
\begin{array}{c}
W\in H_{6}^{4}\left( Q_{T}\right) :\text{ }\left\Vert W\right\Vert
_{H_{6}^{4}\left( Q_{T}\right) }<M, \\ 
W\mid _{\Gamma _{T}}=G_{0}\left( y,t\right) ,\partial _{n}W\mid
_{S_{T}}=G_{1}\left( \mathbf{x},t\right) ,\ 
\end{array}%
\right\} ,  \label{5.2}
\end{equation}%
\begin{equation}
B_{0}\left( M\right) =\left\{ W\in H_{6,0}^{4}\left( Q_{T}\right)
:\left\Vert W\right\Vert _{H_{6}^{4}\left( Q_{T}\right) }<M\right\} ,
\label{5.3}
\end{equation}%
where vector functions $G_{0},G_{1}$ were defined in (\ref{3.14}). By (\ref%
{5.1}), (\ref{5.2}) and (\ref{5.3}) 
\begin{equation}
\left\Vert W\right\Vert _{C_{6}^{2}\left( \overline{Q}_{T}\right) }\leq CM,%
\text{ }\forall W\in B\left( M\right) \cup B_{0}\left( M\right) .
\label{5.4}
\end{equation}%
We assume that in (\ref{3.90})%
\begin{equation}
\left\Vert S\right\Vert _{C_{4}\left( \overline{\Omega }\right) }<M.
\label{5.40}
\end{equation}%
It follows from (\ref{3.15}), (\ref{5.4}) and (\ref{5.40}) that the
following holds true for the vector function $Y$ in (\ref{3.13}):%
\begin{equation}
\left. 
\begin{array}{c}
\left\vert Y\left( W_{1},S_{1}\right) -Y\left( W_{2},S_{2}\right)
\right\vert \leq \\ 
\leq C_{1}\left( \left\vert W_{1}-W_{2}\right\vert \left( \mathbf{x}%
,t\right) +\left\vert \nabla W_{1}-\nabla W_{2}\right\vert \left( \mathbf{x}%
,t\right) \right) + \\ 
+C_{1}\left( \left\vert \int\limits_{T/2}^{t}\left\vert
W_{1}-W_{2}\right\vert \left( \mathbf{x},\tau \right) d\tau \right\vert
+\left\vert \int\limits_{T/2}^{t}\left\vert \nabla W_{1}-\nabla
W_{2}\right\vert \left( \mathbf{x},\tau \right) d\tau \right\vert \right) +
\\ 
+C_{1}\left\vert S_{1}\left( \mathbf{x}\right) -S_{2}\left( \mathbf{x}%
\right) \right\vert , \\ 
\forall W_{1},W_{2}\in \left( B\left( M\right) \cup B_{0}\left( M\right)
\right) ,\text{ }\forall \left( \mathbf{x},t\right) \in Q_{T},%
\end{array}%
\right.  \label{5.5}
\end{equation}%
where vector functions $S_{1},S_{2}$ satisfy (\ref{5.40}). Here and
everywhere below, \newline
$C_{1}=C_{1}\left( M,T,\Omega \right) >0$ denotes different numbers
depending only on $M,T,\Omega $.

\subsection{The sequence of quadratic functionals}

\label{sec:5.2}

We construct in this subsection 5.2 a sequence of weighted quadratic
Tikhonov-like functionals with the CWF (\ref{4.1}) in them to be minimized
on the set $\overline{B\left( M\right) }$.

\begin{remark}
  \label{remark 5.1}
  The paper \cite{Le} works with a version of the CCMM for a CIP for a 1d hyperbolic PDE. Similarly with the current paper, the minimizations of weighted quadratic functionals, although with a different CWF in them, are performed in \cite{Le} on a bounded set. However, that set is significantly different from our set $\overline{B(M)}$ due to a significant difference between our CIP and the CIP of \cite{Le}. The latter difference, in turn causes a significant difference in the convergence analysis of these two papers.
\end{remark}

\subsubsection{The functional number 0}

\label{sec:5.2.1}

Consider only the linear part of operators in (\ref{3.12}). First, we
introduce the linear operator $L,$%
\begin{equation}
\left. 
\begin{array}{c}
L\left( W\right) =\partial _{t}W-d\Delta W+ \\ 
+\left( \mathop{\rm div}\nolimits\left( v_{1}q_{S}\right) ,\mathop{\rm div}%
\nolimits\left( v_{2}q_{I}\right) ,\mathop{\rm div}\nolimits\left(
v_{3}q_{I}\right) ,\mathop{\rm div}\nolimits\left( v_{1t}q_{S}\right) ,%
\mathop{\rm div}\nolimits\left( v_{2t}q_{I}\right) ,\mathop{\rm div}%
\nolimits\left( v_{3t}q_{I}\right) \right) ^{T}.%
\end{array}%
\right.  \label{5.06}
\end{equation}%
Next, we construct the functional number zero as: 
\begin{equation}
\left. 
\begin{array}{c}
J_{0,\lambda ,\xi }:\overline{B\left( M\right) }\rightarrow \mathbb{R}, \\ 
J_{0,\lambda ,\xi }\left( W\right) =e^{-2\lambda b^{2}}\int\limits_{Q_{T}} 
\left[ L\left( W\right) \right] ^{2}\varphi _{\lambda }d\mathbf{x}dt+\xi
\left\Vert W\right\Vert _{H_{6}^{4}\left( Q_{T}\right) }^{2}.%
\end{array}%
\right.  \label{5.6}
\end{equation}%
Here $\xi \in \left( 0,1\right) $ is the regularization parameter and $\xi
\left\Vert W_{0}\right\Vert _{H_{3}^{4}\left( Q_{T}\right) }^{2}$ is the
Tikhonov regularization term. We recall that in the regularization theory,
this term is always considered in the norm of such a Banach space, which is
dense in the original space in terms of the norm of the original space,
compactly embedded in the original one and its norm is stronger than the
norm of the original space \cite{T}. The original space in our case is $%
C_{6}^{2}\left( \overline{Q}_{T}\right) ,$ also, see (\ref{5.1}). The
multiplier $e^{-2\lambda b^{2}}$ in (\ref{5.6}) balances integral term with
the regularization term since (\ref{4.2}) implies that $\max_{\overline{Q}%
_{T}}\varphi _{\lambda }\left( \mathbf{x},t\right) =e^{-2\lambda b^{2}}.$

\textbf{Minimization Problem Number Zero.} {Minimize the functional }$%
J_{0,\lambda ,\xi }\left( W_{0}\right) ${\ on the set }$\overline{B\left(
M\right) }.$

\subsubsection{The functional number $n\geq 1$}

\label{sec:5.2.2}

Suppose that for an appropriate given values of $\lambda $ and $\xi $ we
have constructed already vector functions 
\begin{equation}
W_{0,\min ,\lambda ,\xi },W_{1,\min ,\lambda ,\xi },...,W_{n-1,\min ,\lambda
,\xi }\in \overline{B\left( M\right) },  \label{5.7}
\end{equation}%
where $W_{k,\min ,\lambda ,\xi }$ is the minimizer of a certain functional $%
J_{k,\lambda ,\xi }\left( W\right) $ on the set $\overline{B\left( M\right) }%
:$ we will prove below that such a minimizer exists and is unique. Let $Y$
be the 6D vector function in (\ref{3.12}), (\ref{3.13}), (\ref{3.15}) and $L$
be the linear operator in (\ref{5.06}). Consider the following functional $%
J_{n,\lambda ,\xi }$: 
\begin{equation}
\left. 
\begin{array}{c}
J_{n,\lambda ,\xi }:\overline{B\left( M\right) }\rightarrow \mathbb{R}, \\ 
J_{n,\lambda ,\xi }\left( W\right) \mathbb{=}e^{-2\lambda
b^{2}}\int\limits_{Q_{T}}\left[ L\left( W\right) +Y\left( W_{n-1,\min
,\lambda },S\right) \right] ^{2}\mathbb{\varphi }_{\lambda }d\mathbf{x}dt%
\mathbb{+} \\ 
+\mathbb{\xi }\left\Vert W\right\Vert _{H_{6}^{4}\left( Q_{T}\right) }^{2}.%
\end{array}%
\right.  \label{5.8}
\end{equation}

\textbf{Minimization Problem Number }$n$\textbf{.} {Minimize the functional }%
$J_{n,\lambda ,\xi }\left( W\right) ${\ on the set }$\overline{B\left(
M\right) }.$

\section{Some Properties of the Functional $J_{n,\protect\lambda ,\protect%
\xi }\left( W\right) $}

\label{sec:6}

Since the functional \newline
$J_{n,\lambda ,\xi }\left( W\right) $ is quadratic, it is automatically
strongly convex on the entire space $H_{6}^{4}\left( Q_{T}\right) $ due to
the presence of the regularization term. However, in our convergence
analysis, we need to see how the presence of the terms with $\nabla W$ and $%
W $ affects the strong convexity estimate on the set $\overline{B\left(
M\right) }$ and how this is linked with the regularization parameter $\xi .$
This can be done using the Carleman estimate of Theorem 4.1. Besides, we
establish in this section the existence and uniqueness of the minimizer on
the set $\overline{B\left( M\right) }$ of the functional $J_{n,\lambda ,\xi
}\left( W\right) .$ In addition, we establish here the global convergence to
that minimizer of two versions of the gradient method of the minimization of 
$J_{n,\lambda ,\xi }\left( W\right) .$

We note that the Riesz theorem is inapplicable here since we search for the
minimizer on the bounded set $\overline{B\left( M\right) }\subset
H_{6}^{4}\left( Q_{T}\right) $ rather than on the whole space $%
H_{6}^{4}\left( Q_{T}\right) .$

\subsection{Strong convexity of the functional $J_{n,\protect\lambda ,%
\protect\xi }\left( W\right) $ on the set $\overline{B\left( M\right) }$}

\label{sec:6.1}

\begin{theorem}\label{Theorem 6.1} {Let the vector function }$W_{n-1,\min ,\lambda
  }\in \overline{B\left( M\right) }$ {and let (\ref{5.40}) holds.} {%
  The following three assertions are true:}
  
  {1. For each value of }$\lambda >0${\ and for each }$W\in 
  \overline{B\left( M\right) }${\ the functional }$J_{n,\lambda ,\xi
  }\left( W\right) ${\ has the Fr\'{e}chet derivative }$J_{n,\lambda ,\xi
  }^{\prime }\left( W\right) \in H_{6,0}^{4}\left( Q_{T}\right) ${. The Fr%
  \'{e}chet derivative has the form}%
  \begin{equation}
  \left. 
  \begin{array}{c}
  J_{n,\lambda ,\xi }^{\prime }\left( W\right) \left( h\right) =2e^{-2\lambda
  b^{2}}\int\limits_{Q_{T}}\left[ L\left( W\right) +Y\left( W_{n-1,\min
  ,\lambda },S\right) \right] L\left( h\right) \varphi _{\lambda }d\mathbf{x}%
  dt+ \\ 
  +2\xi \left( h,W\right) ,\text{ }\forall h\in H_{6,0}^{4}\left( Q_{T}\right)
  .%
  \end{array}%
  \right.   \label{6.2}
  \end{equation}%
  {where }$\left( ,\right) ${\ is the scalar product in }$%
  H_{6}^{4}\left( Q_{T}\right) .${\ Furthermore, }$J_{n,\lambda ,\xi
  }^{\prime }\left( W\right) ${\ is Lipschitz continuous on }$\overline{%
  B\left( M\right) }.${\ This means that there exists a number }$%
  D=D\left( \lambda ,\xi ,M\right) >0${\ depending only on listed
  parameters such that }%
  \begin{equation}
  \left. 
  \begin{array}{c}
  \left\Vert J_{n,\lambda ,\xi }^{\prime }\left( W_{1}\right) -J_{n,\lambda
  ,\xi }^{\prime }\left( W_{2}\right) \right\Vert _{H_{6}^{4}\left(
  Q_{T}\right) }\leq D\left\Vert W_{1}-W_{2}\right\Vert _{H_{6}^{4}\left(
  Q_{T}\right) },\text{ } \\ 
  \forall W_{1},W_{2}\in \overline{B\left( M\right) }.%
  \end{array}%
  \right.   \label{6.3}
  \end{equation}
  
  {2. Let }$\lambda _{0}=\lambda _{0}\left( Q_{T},d\right) \geq 1${\
  be the number of Theorem 4.1. There exists a sufficiently large number }$%
  \lambda _{1}=\lambda _{1}\left( Q_{T},d,M\right) \geq \lambda _{0}${\
  such that if }$\lambda \geq \lambda _{1}$ {and the regularization
  parameter }$\xi ${\ is such that}%
  \begin{equation}
  \frac{\xi }{2}\in \left[ \exp \left( -\lambda \frac{T^{2}}{4}\right) ,\frac{1%
  }{2}\right) ,  \label{6.4}
  \end{equation}%
  {then the functional }$J_{n,\lambda ,\xi }\left( W\right) ${\
  satisfies the following estimate:}%
  \begin{equation}
  \left. 
  \begin{array}{c}
  J_{n,\lambda ,\xi }\left( W_{2}\right) -J_{n,\lambda ,\xi }\left(
  W_{1}\right) -J_{n,\lambda ,\xi }^{\prime }\left( W_{1}\right) \left(
  W_{2}-W_{1}\right) \geq \\ 
  \geq C_{1}\lambda \dint\limits_{Q_{T}}\left[ \left( \nabla \left(
  W_{2}-W_{1}\right) \right) ^{2}+\left( W_{2}-W_{1}\right) ^{2}\right]
  \varphi _{\lambda }\left( \mathbf{x},t\right) d\mathbf{x}dt+ \\ 
  +\left( \xi /2\right) \left\Vert W_{2}-W_{1}\right\Vert _{H_{6}^{4}\left(
  Q_{T}\right) }^{2},\text{ } \\ 
  \forall W_{1},W_{2}\in \overline{B\left( M\right) },\ \forall \lambda \geq
  \lambda _{1},%
  \end{array}%
  \right.  \label{6.5}
  \end{equation}%
  {i.e. this functional is strongly convex on the set }$\overline{B\left(
  M\right) }.$
  
  {3.} {For each }$\lambda \geq \lambda _{1}${\ there exists
  unique minimizer }$W_{n,\min ,\lambda }\in \overline{B\left( M\right) }$%
  {\ of the functional }$J_{n,\lambda ,\xi }\left( W\right) ${\ on
  the set }$\overline{B\left( M\right) }${\ and the following inequality
  holds:}%
  \begin{equation}
  J_{n,\lambda ,\xi }^{\prime }\left( W_{n,\min ,\lambda ,\xi }\right) \left(
  W-W_{n,\min ,\lambda ,\xi }\right) \geq 0,\ \forall W\in \overline{B\left(
  M\right) }.  \label{6.6}
  \end{equation}
\end{theorem}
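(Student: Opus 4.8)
The plan is to prove the three assertions in order, exploiting that $J_{n,\lambda,\xi}$ is a quadratic functional plus a strongly convex regularization term, and then injecting the Carleman estimate of Theorem~\ref{Theorem 4.1} to upgrade convexity into a quantitative strong-convexity estimate with the CWF weight.

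For assertion~1, I would compute the Fr\'echet derivative directly from the definition. Writing $J_{n,\lambda,\xi}(W+h)-J_{n,\lambda,\xi}(W)$ and expanding the square $\left[L(W+h)+Y(W_{n-1,\min,\lambda},S)\right]^{2}$, the linearity of $L$ gives $L(W+h)=L(W)+L(h)$, and the cross term $2\left[L(W)+Y\right]L(h)\varphi_{\lambda}$ produces the linear-in-$h$ part of the integral; the regularization term contributes $2\xi(h,W)$, while the remaining $\left[L(h)\right]^{2}\varphi_{\lambda}$ and $\xi\|h\|^{2}$ pieces are $O(\|h\|^{2})$, establishing \eqref{6.2}. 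Note that $Y(W_{n-1,\min,\lambda},S)$ is \emph{fixed} (it does not depend on the argument $W$), so it behaves as an inhomogeneous source and contributes nothing to the Fr\'echet derivative beyond the displayed cross term. That the derivative lies in $H_{6,0}^{4}(Q_{T})$ follows because $h$ ranges over $H_{6,0}^{4}(Q_{T})$ and the functional $h\mapsto J_{n,\lambda,\xi}'(W)(h)$ is a bounded linear functional, represented via the Riesz map on that Hilbert subspace. Lipschitz continuity \eqref{6.3} follows from subtracting the two derivatives at $W_{1}$ and $W_{2}$: the difference is linear in $W_{1}-W_{2}$, and using \eqref{5.4} to bound $L$ and its coefficients on $\overline{B(M)}$ gives the constant $D=D(\lambda,\xi,M)$, with the $\lambda$-dependence entering through $\max_{\overline{Q}_T}\varphi_{\lambda}$ absorbed by the $e^{-2\lambda b^{2}}$ prefactor.

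Assertion~2 is the heart of the matter and the step I expect to be the main obstacle. For quadratic functionals the left side of \eqref{6.5} equals exactly half the second variation, namely
\begin{equation*}
e^{-2\lambda b^{2}}\int_{Q_{T}}\left[L(W_{2}-W_{1})\right]^{2}\varphi_{\lambda}\,\d\mathbf{x}\,\d t+\xi\|W_{2}-W_{1}\|_{H_{6}^{4}(Q_{T})}^{2}.
\end{equation*}
Set $h=W_{2}-W_{1}\in H_{6,0}^{4}(Q_{T})$ (it satisfies the homogeneous boundary conditions since both endpoints lie in $\overline{B(M)}$, which share the same boundary data). The plan is to discard the lower-order divergence terms in $L(h)$ by the inequality $(a+c)^{2}\geq \tfrac12 a^{2}-c^{2}$ to isolate the principal parabolic part $\partial_t h-d\Delta h$, apply the Carleman estimate \eqref{4.30} componentwise to gain the coercive term $C\lambda\int(\nabla h)^{2}+\lambda^{3}h^{2})\varphi_{\lambda}$, and then absorb the leftover first-order terms $\left(\operatorname{div}(v_1 q_S),\dots\right)$ — each bounded by $C_1(|\nabla h|+|h|)$ via \eqref{5.4} — into that gained $C\lambda(\nabla h)^{2}\varphi_{\lambda}$ term by choosing $\lambda$ large. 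The boundary term in \eqref{4.30}, carrying the factor $\lambda^{2}\exp(-2\lambda(T^{2}/4-b^{2}))$, must be controlled by the regularization contribution: the prefactor $e^{-2\lambda b^{2}}$ turns it into $\lambda^{2}\exp(-\lambda T^{2}/2)\|h\|_{H^1}^2$, and the condition \eqref{6.4} on $\xi$ is precisely what guarantees $\xi/2$ dominates this exponentially small quantity, so that \emph{half} the regularization term survives to give the last line of \eqref{6.5}. The delicate point is the simultaneous choice of the threshold $\lambda_1\geq\lambda_0$: it must be large enough both to absorb the lower-order coefficients and to beat the boundary remainder, all uniformly in $M$ through the $C_1(M,T,\Omega)$ bounds.

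Finally, assertion~3 follows from a standard argument for strongly convex functionals on closed convex bounded sets in a Hilbert space. The set $\overline{B(M)}$ is convex, closed, and bounded; by \eqref{6.5} the functional is strongly convex and, being quadratic, is weakly lower semicontinuous and coercive, so a minimizer exists; strong convexity forces uniqueness. The variational inequality \eqref{6.6} is the first-order optimality condition: for any $W\in\overline{B(M)}$, the convexity of the set lets us move from $W_{n,\min,\lambda,\xi}$ toward $W$ along the segment $W_{n,\min,\lambda,\xi}+s(W-W_{n,\min,\lambda,\xi})$, and minimality gives $\tfrac{d}{ds}J_{n,\lambda,\xi}\big|_{s=0^{+}}\geq 0$, which is exactly \eqref{6.6}. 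No new estimates are needed here beyond \eqref{6.5}.
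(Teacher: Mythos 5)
Your proposal is correct and follows essentially the same route as the paper's proof: the exact quadratic expansion identity for $J_{n,\lambda,\xi}(W_{1}+h)-J_{n,\lambda,\xi}(W_{1})-J_{n,\lambda,\xi}^{\prime}(W_{1})(h)$, the Riesz representation of the linear term in $H_{6,0}^{4}(Q_{T})$, the application of the Carleman estimate (\ref{4.30}) to $h=W_{2}-W_{1}$ with absorption of the lower-order divergence terms by choosing $\lambda_{1}$ large, and the use of condition (\ref{6.4}) to dominate the exponentially small boundary remainder so that $\xi/2$ survives. The only cosmetic difference is that you argue assertion 3 directly via weak compactness of $\overline{B(M)}$ and first-order optimality, where the paper cites standard results from its references; the content is identical.
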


\begin{remark} \label{Remark 6.1} Thus, the right hand side of the strong convexity
estimate (\ref{6.4}) includes not only the quadratic term $\left( \xi
/2\right) \left\Vert W_{2}-W_{1}\right\Vert _{H_{3}^{4}\left( Q_{T}\right)
}^{2},${\ which is to be expected since the functional }$J_{n,\lambda
,\xi }${\ is quadratic, but other terms as well. The latter is
important in our convergence analysis below.}
\end{remark}

\begin{proof}[Proof of \Cref{Theorem 6.1}] Let $W_{1},W_{2}\in \overline{B\left(
  M\right) }$ be two arbitrary points. Denote $h=W_{2}-W_{1}$. Then $%
  W_{2}=W_{1}+h.$ By (\ref{6.1})-(\ref{5.3}) and triangle inequality%
  \begin{equation}
  h\in B_{0}\left( 2M\right) .  \label{9.1}
  \end{equation}%
  By (\ref{5.8})%
  \begin{equation}
  \left. 
  \begin{array}{c}
  J_{n,\lambda ,\xi }\left( W_{1}+h\right) \mathbb{=}e^{-2\lambda
  b^{2}}\int\limits_{Q_{T}}\left[ L\left( W_{1}\right) +L\left( h\right)
  +Y\left( W_{n-1,\min ,\lambda },S\right) \right] ^{2}\mathbb{\varphi }%
  _{\lambda }d\mathbf{x}dt\mathbb{+} \\ 
  +\mathbb{\xi }\left\Vert W_{1}+h\right\Vert _{H_{6}^{4}\left( Q_{T}\right)
  }^{2}= \\ 
  =e^{-2\lambda b^{2}}\int\limits_{Q_{T}}\left( L\left( W_{1}\right) +Y\left(
  W_{n-1,\min ,\lambda },S\right) \right) ^{2}\mathbb{\varphi }_{\lambda }d%
  \mathbf{x}dt\mathbb{+\xi }\left\Vert W_{1}\right\Vert _{H_{6}^{4}\left(
  Q_{T}\right) }^{2} \\ 
  +2e^{-2\lambda b^{2}}\int\limits_{Q_{T}}\left( L\left( W_{1}\right) +Y\left(
  W_{n-1,\min ,\lambda },S\right) \right) L\left( h\right) \mathbb{\varphi }%
  _{\lambda }d\mathbf{x}dt+2\left( W_{1},h\right) + \\ 
  +e^{-2\lambda b^{2}}\int\limits_{Q_{T}}\left( L\left( h\right) \right) ^{2}%
  \mathbb{\varphi }_{\lambda }d\mathbf{x}dt+\xi \left\Vert h\right\Vert
  _{H_{6}^{4}\left( Q_{T}\right) }^{2}.%
  \end{array}%
  \right.   \label{9.2}
  \end{equation}%
  By (\ref{6.1}), (\ref{5.3}) and (\ref{9.1}) $h\in H_{6,0}^{4}\left(
  Q_{T}\right) .$ Consider the functional $Z_{n,\lambda ,\xi }\left( h\right) $
  defined as:%
  \begin{equation}
  Z_{n,\lambda ,\xi }\left( h\right) =2e^{-2\lambda
  b^{2}}\int\limits_{Q_{T}}\left( L\left( W_{1}\right) +Y\left( W_{n-1,\min
  ,\lambda },S\right) \right) L\left( h\right) \mathbb{\varphi }_{\lambda }d%
  \mathbf{x}dt+2\left( W_{1},h\right) .  \label{9.3}
  \end{equation}%
  Clearly, this is a linear bounded functional $Z_{n,\lambda ,\xi }:$ $%
  H_{6,0}^{4}\left( Q_{T}\right) \rightarrow \mathbb{R}.$ Hence, by Riesz
  theorem there exists unique point $\widehat{Z}_{n,\lambda ,\xi }\in
  H_{6,0}^{4}\left( Q_{T}\right) $ such that 
  \begin{equation}
  Z_{n,\lambda ,\xi }\left( h\right) =\left( \widehat{Z}_{n,\lambda ,\xi
  },h\right) ,\text{ }\forall h\in H_{6,0}^{4}\left( Q_{T}\right) .
  \label{9.4}
  \end{equation}%
  On the other hand, (\ref{9.2})-(\ref{9.4}) imply that 
  \begin{equation*}
  \lim_{\left\Vert h\right\Vert _{H_{6,}^{4}\left( Q_{T}\right) }\rightarrow 0}%
  \frac{J_{n,\lambda ,\xi }\left( W_{1}+h\right) -J_{n,\lambda ,\xi }\left(
  W_{1}\right) -\left( \widehat{Z}_{n,\lambda ,\xi },h\right) }{\left\Vert
  h\right\Vert _{H_{6,}^{4}\left( Q_{T}\right) }}=0.
  \end{equation*}%
  Hence, $\widehat{Z}_{n,\lambda ,\xi }$ is the Fr\'{e}chet derivative of the
  functional $J_{n,\lambda ,\xi }\left( W_{1}\right) $ at the point $W_{1},$ 
  \begin{equation}
  \widehat{Z}_{n,\lambda ,\xi }=J_{n,\lambda ,\xi }^{\prime }\left(
  W_{1}\right) \in H_{6,0}^{4}\left( Q_{T}\right) .  \label{9.5}
  \end{equation}%
  We omit the proof of the Lipschitz continuity property (\ref{6.3}) since
  this proof is quite similar with the proof of Theorem 5.3.1 of \cite{KL}, so
  as of Theorem 3.1 of \cite{Bak}.
  
  We now prove the strong convexity estimate (\ref{6.5}). By (\ref{9.2})-(\ref%
  {9.5})%
  \begin{equation}
  \left. 
  \begin{array}{c}
  J_{n,\lambda ,\xi }\left( W_{1}+h\right) -J_{n,\lambda ,\xi }\left(
  W_{1}\right) -J_{n,\lambda ,\xi }^{\prime }\left( W_{1}\right) \left(
  h\right) = \\ 
  =e^{-2\lambda b^{2}}\int\limits_{Q_{T}}\left( L\left( h\right) \right) ^{2}%
  \mathbb{\varphi }_{\lambda }d\mathbf{x}dt+\xi \left\Vert h\right\Vert
  _{H_{6}^{4}\left( Q_{T}\right) }^{2}.%
  \end{array}%
  \right.   \label{9.6}
  \end{equation}%
  Using Carleman estimate (\ref{4.30}), estimate from the below the first term
  in the second line of (\ref{9.6}). By (\ref{2.03}) and (\ref{5.06})%
  \begin{equation}
  \left( L\left( h\right) \right) ^{2}\geq \left( h_{t}-d\Delta h\right)
  ^{2}-C_{1}\left( \left\vert \nabla h\right\vert ^{2}+\left\vert h\right\vert
  ^{2}\right) .  \label{9.7}
  \end{equation}%
  Since $h\in H_{6,0}^{4}\left( Q_{T}\right) ,$ then by (\ref{4.3}) and (\ref%
  {6.1}) we can apply Carleman estimate (\ref{4.30}) to the vector function $h$%
  . Hence, setting $\lambda \geq \lambda _{0},$ multiplying (\ref{9.7}) by $%
  \mathbb{\varphi }_{\lambda }e^{-2\lambda b^{2}}$ and integrating over $Q_{T},
  $ we obtain%
  \begin{equation}
  \left. 
  \begin{array}{c}
  e^{-2\lambda b^{2}}\int\limits_{Q_{T}}\left( L\left( h\right) \right) ^{2}%
  \mathbb{\varphi }_{\lambda }d\mathbf{x}dt\geq e^{-2\lambda
  b^{2}}\int\limits_{Q_{T}}\left( h_{t}-d\Delta h\right) ^{2}\mathbb{\varphi }%
  _{\lambda }d\mathbf{x}dt- \\ 
  -e^{-2\lambda b^{2}}C_{1}\int\limits_{Q_{T}}\left( \left\vert \nabla
  h\right\vert ^{2}+\left\vert h\right\vert ^{2}\right) \mathbb{\varphi }%
  _{\lambda }d\mathbf{x}dt\geq  \\ 
  \geq Ce^{-2\lambda b^{2}}\int\limits_{Q_{T}}\left( \lambda \left\vert \nabla
  h\right\vert ^{2}+\lambda ^{3}\left\vert h\right\vert ^{2}\right) \varphi
  _{\lambda }d\mathbf{x}dt- \\ 
  -e^{-2\lambda b^{2}}C_{1}\int\limits_{Q_{T}}\left( \left\vert \nabla
  h\right\vert ^{2}+\left\vert h\right\vert ^{2}\right) \mathbb{\varphi }%
  _{\lambda }d\mathbf{x}dt- \\ 
  -C\left( \left\Vert h\left( \mathbf{x},T\right) \right\Vert
  _{H_{6}^{1}\left( \Omega \right) }^{2}+\left\Vert h\left( \mathbf{x}%
  ,0\right) \right\Vert _{H_{6}^{1}\left( \Omega \right) }^{2}\right) \lambda
  ^{2}\exp \left( -\lambda T^{2}/2\right) 
  \end{array}%
  \right.   \label{9.8}
  \end{equation}%
  Choose $\lambda _{1}=\lambda _{1}\left( Q_{T},d,M\right) \geq \lambda _{0}$
  so large that $C\lambda _{1}>2C_{1}.$ Then the second term in the third line
  of (\ref{9.8}) is absorbed by the first term in that line. Hence, (\ref{9.8}%
  ) becomes%
  \begin{equation}
  \left. 
  \begin{array}{c}
  e^{-2\lambda b^{2}}\int\limits_{Q_{T}}\left( L\left( h\right) \right) ^{2}%
  \mathbb{\varphi }_{\lambda }d\mathbf{x}dt\geq C_{1}\lambda
  \int\limits_{Q_{T}}\left( \left\vert \nabla h\right\vert ^{2}+\left\vert
  h\right\vert ^{2}\right) \varphi _{\lambda }d\mathbf{x}dt- \\ 
  -C\left( \left\Vert h\left( \mathbf{x},T\right) \right\Vert
  _{H_{6}^{1}\left( \Omega \right) }^{2}+\left\Vert h\left( \mathbf{x}%
  ,0\right) \right\Vert _{H_{6}^{1}\left( \Omega \right) }^{2}\right) \lambda
  ^{2}\exp \left( -\lambda T^{2}/2\right) ,\text{ }\forall \lambda \geq
  \lambda _{1}.%
  \end{array}%
  \right.   \label{9.9}
  \end{equation}%
  Now, by (\ref{5.3}), (\ref{9.1}) and trace theorem%
  \begin{equation*}
  \left\Vert h\left( \mathbf{x},T\right) \right\Vert _{H_{6}^{1}\left( \Omega
  \right) }^{2}+\left\Vert h\left( \mathbf{x},0\right) \right\Vert
  _{H_{6}^{1}\left( \Omega \right) }^{2}\leq C_{1}\left\Vert h\right\Vert
  _{H_{6}^{4}\left( Q_{T}\right) }^{2}.
  \end{equation*}%
  Hence, by (\ref{5.4}) 
  \begin{equation}
  \left. 
  \begin{array}{c}
  \xi \left\Vert h\right\Vert _{H_{6}^{4}\left( Q_{T}\right) }^{2}-C\left(
  \left\Vert h\left( \mathbf{x},T\right) \right\Vert _{H_{6}^{1}\left( \Omega
  \right) }^{2}+\left\Vert h\left( \mathbf{x},0\right) \right\Vert
  _{H_{6}^{1}\left( \Omega \right) }^{2}\right) \lambda ^{2}\exp \left(
  -\lambda T^{2}/2\right) \geq  \\ 
  \geq \left( \xi -C_{1}\lambda ^{2}\exp \left( -\lambda T^{2}/2\right)
  \right) \left\Vert h\right\Vert _{H_{6}^{4}\left( Q_{T}\right) }^{2}\geq
  \left( \xi /2\right) \left\Vert h\right\Vert _{H_{6}^{4}\left( Q_{T}\right)
  }^{2}.%
  \end{array}%
  \right.   \label{9.10}
  \end{equation}%
  Thus, using (\ref{9.6}), (\ref{9.9}) and (\ref{9.10}), we obtain 
  \begin{equation}
  \left. 
  \begin{array}{c}
  J_{n,\lambda ,\xi }\left( W_{1}+h\right) -J_{n,\lambda ,\xi }\left(
  W_{1}\right) -J_{n,\lambda ,\xi }^{\prime }\left( W_{1}\right) \left(
  h\right) \geq  \\ 
  \geq C_{1}\lambda \int\limits_{Q_{T}}\left( \left\vert \nabla h\right\vert
  ^{2}+\left\vert h\right\vert ^{2}\right) \varphi _{\lambda }d\mathbf{x}%
  dt+\left( \xi /2\right) \left\Vert h\right\Vert _{H_{6}^{4}\left(
  Q_{T}\right) }^{2},\text{ }\forall \lambda \geq \lambda _{1},%
  \end{array}%
  \right.   \label{9.11}
  \end{equation}%
  which proves (\ref{6.5}). As soon as (\ref{6.5}) is established, existence
  and uniqueness of the minimizer $W_{n,\min ,\lambda ,\xi }$ of the
  functional $J_{n,\lambda ,\xi }$ on the set $\overline{B\left( M\right) }$
  as well as inequality (\ref{6.6}) follow from a simple combination of Lemma
  2.1 with Theorem 2.1 of \cite{Bak} as well as from a combination of Lemma
  5.2.1 with Theorem 5.2.1 of \cite{KL}.
\end{proof}

\subsection{Global convergence of the gradient projection method}

\label{sec:6.2}

Suppose that there exists a vector function 
\begin{equation}
F\in B\left( M\right) .  \label{6.07}
\end{equation}%
For each vector function $W\in B\left( M\right) $ consider the difference%
\begin{equation}
\widetilde{W}=W-F.  \label{6.7}
\end{equation}%
By (\ref{5.2}), (\ref{5.3}), (\ref{6.7}) and triangle inequality 
\begin{equation}
\widetilde{W}\in B_{0}\left( 2M\right) ,\text{ }\forall W\in B\left(
M\right) .  \label{6.8}
\end{equation}%
The convenience of the transformation (\ref{6.7}) is that since $B_{0}\left(
2M\right) $ is the ball of the radius $2M$ with the center at $\left\{
0\right\} $ in the space $H_{6,0}^{4}\left( Q_{T}\right) $, then it is easy
to construct projection operator of $H_{6,0}^{4}\left( Q_{T}\right) $ on $%
\overline{B_{0}\left( 2M\right) }$ as:%
\begin{equation}
P_{B_{0}\left( 2M\right) }\left( Y\right) =\left\{ 
\begin{array}{c}
Y\text{ if }Y\in \overline{B_{0}\left( 2M\right) }, \\ 
2M\cdot Y/\left\Vert Y\right\Vert _{H_{6}^{4}\left( Q_{T}\right) }\text{ if }%
Y\notin \overline{B_{0}\left( 2M\right) }.%
\end{array}%
\right.  \label{6.9}
\end{equation}%
By (\ref{6.07}) and triangle inequality 
\begin{equation}
\widetilde{W}+F\in B\left( 3M\right) ,\text{ }\forall \widetilde{W}\in 
\overline{B_{0}\left( 2M\right) }  \label{6.10}
\end{equation}%
Consider the functional $I_{n,\lambda ,\xi }:\overline{B_{0}\left( 2M\right) 
}\rightarrow \mathbb{R}$ defined as:%
\begin{equation}
\left. 
\begin{array}{c}
I_{n,\lambda ,\xi }\left( \widetilde{W}\right) =J_{n,\lambda ,\xi }\left( 
\widetilde{W}+F\right) = \\ 
=e^{-2\lambda b^{2}}\int\limits_{Q_{T}}\left[ L\left( \widetilde{W}+F\right)
+Y\left( \widetilde{W}_{n-1,\min ,\lambda }+F,S\right) \right] ^{2}\mathbb{%
\varphi }_{\lambda }d\mathbf{x}dt+ \\ 
+\mathbb{\xi }\left\Vert \widetilde{W}+F\right\Vert _{H_{6}^{4}\left(
Q_{T}\right) }^{2},\text{ }\forall \widetilde{W}\in \overline{B_{0}\left(
2M\right) },%
\end{array}%
\right.  \label{6.110}
\end{equation}%
where $\widetilde{W}_{n-1,\min ,\lambda }\in \overline{B_{0}\left( 2M\right) 
}$ is the unique minimizer of the functional \newline
$J_{n-1,\lambda ,\xi }\left( \widetilde{W}+F\right) $ on the set $\overline{%
B_{0}\left( 2M\right) }.$ An obvious analog of Theorem 6.1 is valid for the
functional $I_{n,\lambda ,\xi }\left( \widetilde{W}\right) $, where by (\ref%
{6.10})%
\begin{equation}
\lambda \geq \lambda _{2}=\lambda _{1}\left( Q_{T},d,3M\right) \geq \lambda
_{1}\left( Q_{T},d,M\right) .  \label{6.12}
\end{equation}

Theorem 6.1 and (\ref{6.12}) justify the existence and uniqueness of the
minimizer $\widetilde{W}_{n,\min ,\lambda }\in \overline{B_{0}\left(
2M\right) }$ of the functional $I_{n,\lambda ,\xi }$ on the set $\overline{%
B_{0}\left( 2M\right) }$ for any $n$ and for $\lambda $ satisfying (\ref%
{6.12}), which, in turn justifies the presence of the term $\widetilde{W}%
_{n-1,\min ,\lambda }$ in (\ref{6.110}). We now construct a sequence
converging to $\widetilde{W}_{\min ,n,\lambda ,\xi }.$ We start from an
arbitrary point $\widetilde{W}_{0}\in B_{0}\left( 2M\right) .$ The sequence
of the gradient projection method is: 
\begin{equation}
\widetilde{W}_{n}=P_{B_{0}\left( 2M\right) }\left( \widetilde{W}%
_{n-1}-\gamma I_{n,\lambda ,\xi }^{\prime }\left( \widetilde{W}_{n-1}\right)
\right) ,\text{ }n=1,2,...,  \label{6.13}
\end{equation}%
where the step size $\gamma >0$ is a certain number and $I_{n,\lambda ,\xi
}^{\prime }\left( \widetilde{W}_{n-1}\right) $ is the Fr\'{e}chet derivative
of the functional $I_{n,\lambda ,\xi }\left( \widetilde{W}\right) $ at the
point $\widetilde{W}_{n-1}$. \ Note that since by Theorem 6.1 $I_{n,\lambda
,\xi }^{\prime }\in H_{6,0}^{4}\left( Q_{T}\right) $ and since (\ref{6.9})
holds, then in (\ref{6.13}) $\widetilde{W}_{n}\in H_{6,0}^{4}\left(
Q_{T}\right) $ for all $n=1,2,\dots$. \Cref{Theorem 6.2} follows immediately
from a combination of Theorem 6.1 with Theorem 2.1 of \cite{Bak}.

\begin{theorem}\label{Theorem 6.2} {Let parameters }$\lambda ${\ and }$\xi $%
  {\ satisfy (\ref{6.12}) and (\ref{6.4}) respectively. Then there exists
  a sufficiently small number }$\gamma _{0}>0${\ such that for any }$%
  \gamma \in \left( 0,\gamma _{0}\right) $ {the sequence (\ref{6.13})
  converges to the minimizer }$\widetilde{W}_{\min ,n,\lambda ,\xi }${.
  Furthermore, there exists a number }$\theta =\theta \left( \gamma \right)
  \in \left( 0,1\right) ${\ such that the following convergence estimate
  holds:}%
  \begin{equation*}
  \left\Vert \widetilde{W}_{n}-\widetilde{W}_{\min ,n,\lambda ,\xi
  }\right\Vert _{H_{6}^{4}\left( Q_{T}\right) }\leq \theta ^{n}\left\Vert 
  \widetilde{W}_{0}-\widetilde{W}_{\min ,n,\lambda ,\xi }\right\Vert
  _{H_{6}^{4}\left( Q_{T}\right) },\text{ }n=1,2,...
  \end{equation*}
\end{theorem}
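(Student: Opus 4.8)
The plan is to deduce \Cref{Theorem 6.2} from the standard convergence theorem for the gradient projection method applied to a strongly convex functional with Lipschitz-continuous gradient, namely Theorem 2.1 of \cite{Bak}, exactly as announced just before the statement. Thus the entire task reduces to checking that, for $\lambda \geq \lambda _{2}$ as in (\ref{6.12}) and $\xi $ satisfying (\ref{6.4}), the functional $I_{n,\lambda ,\xi }$ fulfills the hypotheses of that theorem on the closed convex bounded set $\overline{B_{0}\left( 2M\right) }\subset H_{6,0}^{4}\left( Q_{T}\right) $. These hypotheses are precisely the three assertions of the analog of \Cref{Theorem 6.1} valid for $I_{n,\lambda ,\xi }$ on $\overline{B_{0}\left( 2M\right) }$: its Fr\'{e}chet derivative $I_{n,\lambda ,\xi }^{\prime }$ is Lipschitz continuous there with a constant $D=D\left( \lambda ,\xi ,M\right) $ (the analog of (\ref{6.3})); $I_{n,\lambda ,\xi }$ is strongly convex there (the analog of (\ref{6.5})); and it has a unique minimizer $\widetilde{W}_{\min ,n,\lambda ,\xi }$ characterized by the variational inequality that is the analog of (\ref{6.6}). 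Since $\overline{B_{0}\left( 2M\right) }$ is a ball centered at the origin it is convex, and the explicit metric projection $P_{B_{0}\left( 2M\right) }$ in (\ref{6.9}) is available.

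First I would record that the minimizer is precisely the unique fixed point, for every $\gamma >0$, of the projected-gradient map $T_{\gamma }\left( \widetilde{W}\right) =P_{B_{0}\left( 2M\right) }\left( \widetilde{W}-\gamma I_{n,\lambda ,\xi }^{\prime }\left( \widetilde{W}\right) \right) $. Indeed, by the standard characterization of the metric projection onto a closed convex set, the identity $\widetilde{U}=P_{B_{0}\left( 2M\right) }\left( \widetilde{U}-\gamma I_{n,\lambda ,\xi }^{\prime }\left( \widetilde{U}\right) \right) $ holds if and only if $\left( I_{n,\lambda ,\xi }^{\prime }\left( \widetilde{U}\right) ,\widetilde{W}-\widetilde{U}\right) \geq 0$ for all $\widetilde{W}\in \overline{B_{0}\left( 2M\right) }$, which is exactly the variational inequality satisfied uniquely by $\widetilde{W}_{\min ,n,\lambda ,\xi }$. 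In particular, the target point is independent of $\gamma $.

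Next I would show that $T_{\gamma }$ is a contraction on $\overline{B_{0}\left( 2M\right) }$ for all sufficiently small $\gamma $. Writing $I^{\prime }$ for $I_{n,\lambda ,\xi }^{\prime }$, $\widetilde{W}_{\min }$ for $\widetilde{W}_{\min ,n,\lambda ,\xi }$, and $e_{m-1}=\widetilde{W}_{m-1}-\widetilde{W}_{\min }$, and using the nonexpansiveness of the metric projection onto the convex set $\overline{B_{0}\left( 2M\right) }$, I would estimate (all norms being the $H_{6}^{4}\left( Q_{T}\right) $ norm)
\[
\left\Vert \widetilde{W}_{m}-\widetilde{W}_{\min }\right\Vert ^{2}\leq \left\Vert e_{m-1}-\gamma \left( I^{\prime }\left( \widetilde{W}_{m-1}\right) -I^{\prime }\left( \widetilde{W}_{\min }\right) \right) \right\Vert ^{2}=\left\Vert e_{m-1}\right\Vert ^{2}-2\gamma \left( I^{\prime }\left( \widetilde{W}_{m-1}\right) -I^{\prime }\left( \widetilde{W}_{\min }\right) ,e_{m-1}\right) +\gamma ^{2}\left\Vert I^{\prime }\left( \widetilde{W}_{m-1}\right) -I^{\prime }\left( \widetilde{W}_{\min }\right) \right\Vert ^{2}.
\]
The strong convexity estimate (the analog of (\ref{6.5})) yields, after symmetrizing in its two arguments, the strong monotonicity $\left( I^{\prime }\left( W_{2}\right) -I^{\prime }\left( W_{1}\right) ,W_{2}-W_{1}\right) \geq \kappa \left\Vert W_{2}-W_{1}\right\Vert ^{2}$ with a constant $\kappa >0$ no smaller than $\xi $ (the Carleman-weighted terms on the right of (\ref{6.5}) are nonnegative and only improve the constant), while the analog of (\ref{6.3}) supplies the Lipschitz bound with constant $D$. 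Hence
\[
\left\Vert \widetilde{W}_{m}-\widetilde{W}_{\min }\right\Vert ^{2}\leq \left( 1-2\gamma \kappa +\gamma ^{2}D^{2}\right) \left\Vert e_{m-1}\right\Vert ^{2},
\]
so $\theta =\theta \left( \gamma \right) =\sqrt{1-2\gamma \kappa +\gamma ^{2}D^{2}}\in \left( 0,1\right) $ for every $\gamma \in \left( 0,\gamma _{0}\right) $ with $\gamma _{0}=2\kappa /D^{2}$. Iterating this one-step contraction gives the claimed geometric estimate and, in particular, convergence of the sequence (\ref{6.13}) to the minimizer $\widetilde{W}_{\min ,n,\lambda ,\xi }$.

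The step I expect to demand the most care is guaranteeing that $\gamma _{0}$ and $\theta $ can be chosen uniformly in the index $n$. This is essential because (\ref{6.13}) advances the index together with the functional $I_{n,\lambda ,\xi }$, whose defining term $Y\left( \widetilde{W}_{n-1,\min ,\lambda }+F,S\right) $ changes with $n$. The key observation is that this term enters $I_{n,\lambda ,\xi }$ in (\ref{6.110}) only through the affine and constant parts of the integrand; the quadratic principal part, built from the fixed linear operator $L$ and the regularization term, is independent of $n$. Consequently both the Lipschitz constant $D$ and the strong-monotonicity constant $\kappa $ depend only on $\lambda ,\xi ,M$ and $Q_{T}$, not on $n$, so a single $\gamma _{0}=2\kappa /D^{2}$ and a single rate $\theta \left( \gamma \right) $ serve for all $n$. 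It is exactly this uniformity that validates the one-step contraction at every index and that will feed into the outer convergence analysis of Theorem 7.1.
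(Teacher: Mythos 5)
Your proposal is correct and follows essentially the same route as the paper: the paper proves \Cref{Theorem 6.2} in one line by combining \Cref{Theorem 6.1} (Lipschitz continuity of the Fr\'{e}chet derivative, strong convexity, and the variational inequality (\ref{6.6}), all valid for $I_{n,\lambda ,\xi }$ on $\overline{B_{0}\left( 2M\right) }$ when $\lambda \geq \lambda _{2}$) with the standard gradient projection convergence theorem, Theorem 2.1 of \cite{Bak}. The only difference is that you unfold the proof of that cited theorem yourself (fixed-point characterization of the minimizer via the metric projection, nonexpansiveness, strong monotonicity with constant $\kappa \geq \xi $ obtained by symmetrizing (\ref{6.5}), and the one-step contraction with $\theta \left( \gamma \right) =\sqrt{1-2\gamma \kappa +\gamma ^{2}D^{2}}$), and you correctly observe that $D$ and $\kappa $ are independent of $n$ since the $n$-dependent term $Y\left( \widetilde{W}_{n-1,\min ,\lambda }+F,S\right) $ enters the functional only affinely, which makes the argument self-contained rather than a citation.
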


Thus, the gradient projection method (\ref{6.13}) of the minimization of the
functional $I_{n,\lambda ,\xi }$ on the set $\overline{B_{0}\left( 2M\right) 
}$ converges globally, since it starts from an arbitrary point $\widetilde{W}%
_{0}\in B_{0}\left( 2M\right) $ and smallness condition is not imposed on $M$%
.

\subsection{Global convergence of the gradient descent method}

\label{sec:6.3}

The gradient projection method is not easy to implement due to the necessity
to obtain zero boundary conditions via (\ref{6.07}), (\ref{6.7}). In
addition, it is inconvenient to use the projection operator (\ref{6.9}) in
computations. Hence, we now formulate a simpler gradient descent method of
the minimization of functional (\ref{5.8}) on the set $\overline{B\left(
M\right) }.$ To do this, we need to assume that the minimizer that
functional belongs to the set $B\left( M/3\right) .$

Let $W_{0}\in B\left( M/3\right) $ be an arbitrary point and let $\gamma \in
\left( 0,1\right) $ be a number. The sequence of the gradient descent method
is:%
\begin{equation}
W_{n}=W_{n-1}-\gamma J_{n,\lambda ,\xi }^{\prime }\left( W_{n-1}\right) ,%
\text{ }n=1,2,...  \label{6.14}
\end{equation}

\Cref{Theorem 6.3} follows immediately from a combination of Theorem 6.1
with Theorem 6 of \cite{SAR}.

\begin{theorem}\label{Theorem 6.3} {Let }$\lambda \geq \lambda _{1},${\ where }$%
  \lambda _{1}${\ is the number of Theorem 6.1 and let (\ref{6.4}) holds.
  Let }$W_{n,\min ,\lambda ,\xi }\in \overline{B\left( M\right) }${\ be
  the minimizer of the functional }$J_{n,\lambda ,\xi }${\ on the set }$%
  \overline{B\left( M\right) },${\ which was found in Theorem 6.1. Assume
  that }$W_{n,\min ,\lambda ,\xi }\in B\left( M/3\right) .${\ Then there
  exists a sufficiently small number }$\gamma _{0}>0${\ such that for any 
  }$\gamma \in \left( 0,\gamma _{0}\right) ${\ all terms of sequence (\ref%
  {6.14}) belong to the set }$B\left( M\right) .${\ Furthermore, the
  sequence (\ref{6.14}) converges to }$W_{n,\min ,\lambda ,\xi }${\ and
  there exists a number }$\theta =\theta \left( \gamma \right) \in \left(
  0,1\right) ${\ such that the following convergence estimate is valid:} 
  \begin{equation*}
  \left\Vert W_{n}-W_{\min ,n,\lambda ,\xi }\right\Vert _{H_{6}^{4}\left(
  Q_{T}\right) }\leq \theta ^{n}\left\Vert W_{0}-W_{\min ,n,\lambda ,\xi
  }\right\Vert _{H_{6}^{4}\left( Q_{T}\right) },\text{ }n=1,2,...
  \end{equation*}
\end{theorem}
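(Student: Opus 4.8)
The plan is to check that the functional $J_{n,\lambda,\xi}$ fulfills, on the set $\overline{B(M)}$, the abstract hypotheses under which the gradient descent scheme is known to converge (Theorem 6 of \cite{SAR}), and then to invoke that result. \Cref{Theorem 6.1} supplies exactly these hypotheses: for $\lambda\geq\lambda_1$ and $\xi$ obeying (\ref{6.4}), the Fr\'{e}chet derivative $J_{n,\lambda,\xi}^{\prime}$ is Lipschitz continuous on $\overline{B(M)}$ with some constant $D=D(\lambda,\xi,M)$ (see (\ref{6.3})), and the strong convexity estimate (\ref{6.5}) holds. Discarding the nonnegative Carleman-weighted integral on the right-hand side of (\ref{6.5}), I would read off the clean lower bound $J_{n,\lambda,\xi}(W_2)-J_{n,\lambda,\xi}(W_1)-J_{n,\lambda,\xi}^{\prime}(W_1)(W_2-W_1)\geq(\xi/2)\|W_2-W_1\|_{H_6^4(Q_T)}^2$, i.e. strong convexity with modulus $\xi$ in the norm of $H_6^4(Q_T)$. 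Finally, \Cref{Theorem 6.1} furnishes the unique minimizer $W_{n,\min,\lambda,\xi}$ together with the variational inequality (\ref{6.6}); since by hypothesis $W_{n,\min,\lambda,\xi}\in B(M/3)$ is an interior point of $\overline{B(M)}$, inequality (\ref{6.6}) forces $J_{n,\lambda,\xi}^{\prime}(W_{n,\min,\lambda,\xi})=0$.

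The engine of the proof is a one-step contraction. Write $W_{\ast}=W_{n,\min,\lambda,\xi}$ and suppose inductively that the iterate $W_{k-1}$ produced by (\ref{6.14}) lies in $B(M)$, so that (\ref{6.3}) and (\ref{6.5}) apply. Expanding $\|W_k-W_{\ast}\|_{H_6^4(Q_T)}^2$ via $W_k=W_{k-1}-\gamma J_{n,\lambda,\xi}^{\prime}(W_{k-1})$ produces a cross term $-2\gamma\left(J_{n,\lambda,\xi}^{\prime}(W_{k-1}),W_{k-1}-W_{\ast}\right)$ and the quadratic term $\gamma^2\|J_{n,\lambda,\xi}^{\prime}(W_{k-1})\|_{H_6^4(Q_T)}^2$. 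The monotonicity consequence of strong convexity bounds the inner product $\left(J_{n,\lambda,\xi}^{\prime}(W_{k-1}),W_{k-1}-W_{\ast}\right)$ below by $\xi\|W_{k-1}-W_{\ast}\|_{H_6^4(Q_T)}^2$, while $J_{n,\lambda,\xi}^{\prime}(W_{\ast})=0$ together with the Lipschitz bound (\ref{6.3}) controls $\|J_{n,\lambda,\xi}^{\prime}(W_{k-1})\|_{H_6^4(Q_T)}$ above by $D\|W_{k-1}-W_{\ast}\|_{H_6^4(Q_T)}$. Collecting these estimates gives $\|W_k-W_{\ast}\|_{H_6^4(Q_T)}^2\leq(1-2\xi\gamma+D^2\gamma^2)\|W_{k-1}-W_{\ast}\|_{H_6^4(Q_T)}^2$, so choosing $\gamma_0=2\xi/D^2$ and $\gamma\in(0,\gamma_0)$ makes $\theta:=(1-2\xi\gamma+D^2\gamma^2)^{1/2}\in(0,1)$, and iterating yields the stated geometric convergence estimate.

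The step I expect to be the genuine obstacle is not this contraction but the verification that every iterate stays in $B(M)$, since (\ref{6.14}) contains no projection and the strong convexity (\ref{6.5}) is only available on $\overline{B(M)}$. This is exactly where the assumption $W_{\ast}\in B(M/3)$ is indispensable. Starting from $W_0\in B(M/3)$ one has $\|W_0-W_{\ast}\|_{H_6^4(Q_T)}<2M/3$; and once the contraction of the previous paragraph is available under the inductive hypothesis $W_{k-1}\in B(M)$, it gives $\|W_k-W_{\ast}\|_{H_6^4(Q_T)}\leq\|W_0-W_{\ast}\|_{H_6^4(Q_T)}<2M/3$, whence by the triangle inequality $\|W_k\|_{H_6^4(Q_T)}\leq\|W_k-W_{\ast}\|_{H_6^4(Q_T)}+\|W_{\ast}\|_{H_6^4(Q_T)}<2M/3+M/3=M$, so $W_k\in B(M)$. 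This closes the induction, legitimizing the use of (\ref{6.5}) and (\ref{6.3}) at every step, and simultaneously establishes both conclusions of the theorem. Packaging the per-step contraction with this feasibility induction is precisely the content of Theorem 6 of \cite{SAR} specialized to $\overline{B(M)}$ with the constants supplied by \Cref{Theorem 6.1}.
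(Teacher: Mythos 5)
Your proposal is correct and follows the same route as the paper: the paper's entire proof is the one-line observation that Theorem 6.3 follows from combining Theorem 6.1 with Theorem 6 of \cite{SAR}, which is exactly the combination you invoke. The only difference is that you additionally unpack the content of that cited theorem (the descent-step contraction via strong convexity and Lipschitz continuity of $J_{n,\lambda,\xi}^{\prime}$, plus the feasibility induction keeping iterates in $B\left( M\right)$ via the hypothesis $W_{n,\min ,\lambda ,\xi }\in B\left( M/3\right)$), and your unpacking is sound.
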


This is again the global convergence property since $W_{0}\in B\left(
M/3\right) $ is an arbitrary point and since smallness conditions are not
imposed on the number $M$.

\section{Convergence of Minimizers to the True Solution}

\label{sec:7}

One of the main concepts of the regularization theory is the assumption that
there exists an \textquotedblleft ideal" solution of an ill-posed problem
with the \textquotedblleft ideal", noiseless input data \cite{T}. We call
this \textquotedblleft exact solution". The input data in our case are
lateral data $G_{0},G_{1}$ in (\ref{3.14}) as well as the vector function $S$
in (\ref{3.90}).

The above Theorems 6.1-6.3 characterize some properties of the functionals $%
J_{n,\lambda ,\xi }.$ However, they do not address the question about the
relevance of minimizers of these functionals to the true solution of problem
(\ref{3.11})-(\ref{3.15}) as well as to the true solution of our CIP. These
key questions are addressed in Theorems 7.1-7.4 of this section.

\subsection{Introducing noise in the input data}

\label{sec:7.2}

Let $W^{\ast }$ be that exact solution of problem (\ref{3.12})-(\ref{3.14})
with the exact data $G_{0}^{\ast },G_{1}^{\ast }$ and the exact vector
function $S^{\ast },$ 
\begin{equation}
\left. 
\begin{array}{c}
L\left( W^{\ast }\right) +Y\left( W^{\ast },S^{\ast }\right) =0\text{ in }%
Q_{T}, \\ 
W^{\ast }\mid _{\Gamma _{T}}=G_{0}^{\ast }\left( y,t\right) ,\text{ }%
\partial _{n}W^{\ast }\mid _{S_{T}}=G_{1}^{\ast }\left( \mathbf{x},t\right) .%
\end{array}%
\right.  \label{7.1}
\end{equation}%
It is natural to assume that 
\begin{equation}
W^{\ast }\in B^{\ast }\left( M\right) ,  \label{7.01}
\end{equation}%
where, similarly with (\ref{5.2}), 
\begin{equation}
B^{\ast }\left( M\right) =\left\{ 
\begin{array}{c}
W\in H_{6}^{4}\left( Q_{T}\right) :\text{ }\left\Vert W\right\Vert
_{H_{6}^{4}\left( Q_{T}\right) }<M, \\ 
W\mid _{\Gamma _{T}}=G_{0}^{\ast }\left( y,t\right) ,\partial _{n}W\mid
_{S_{T}}=G_{1}^{\ast }\left( \mathbf{x},t\right) ,\ 
\end{array}%
\right\}  \label{7.2}
\end{equation}

Due to the requirement $u\in H_{0}^{2,1}\left( Q_{T}\right) $ in Carleman
estimate (\ref{4.30}), we need to work with the zero boundary conditions in
our convergence analysis. Hence, let $F\in B\left( M\right) $ be the vector
function in (\ref{6.07}). Let $\delta \in \left( 0,1\right) $ be the noise
level in the input data. Suppose that there exists a vector function $%
F^{\ast }\in B^{\ast }\left( M\right) $ such that 
\begin{equation}
\left\Vert F-F^{\ast }\right\Vert _{H_{6}^{4}\left( Q_{T}\right) }<\delta .
\label{7.3}
\end{equation}%
In addition, let%
\begin{equation}
\left\Vert S-S^{\ast }\right\Vert _{C_{4}\left( \overline{\Omega }\right)
}<\delta .  \label{7.5}
\end{equation}

Keep notations of subsection 6.2 and similarly with (\ref{6.7}) let 
\begin{equation}
\widetilde{W}^{\ast }=W^{\ast }-F^{\ast }.  \label{7.6}
\end{equation}%
Then, by (\ref{7.2}) and triangle inequality 
\begin{equation}
\widetilde{W}^{\ast }\in B_{0}\left( 2M\right) .  \label{7.7}
\end{equation}

\subsection{Convergence of minimizers}

\label{sec:7.3}

Denote%
\begin{equation}
\left. 
\begin{array}{c}
h_{n}=\widetilde{W}_{n,\min ,\lambda ,\xi }-\widetilde{W}^{\ast }, \\ 
s_{n}=W_{n,\min ,\lambda ,\xi }-W^{\ast }.%
\end{array}%
\right.  \label{7.8}
\end{equation}%
Recall that $W_{n,\min ,\lambda ,\xi }$ is the minimizer of the functional $%
J_{n,\lambda ,\xi }${\ }on the set $\overline{B\left( M\right) },$ and $%
\widetilde{W}_{n,\min ,\lambda ,\xi }$ is the minimizer of the functional $%
I_{n,\lambda ,\xi }$ in (\ref{6.110}) on the set $\overline{B_{0}\left(
2M\right) }.$ We cannot guarantee that $\widetilde{W}_{n,\min ,\lambda ,\xi
}+F=W_{n,\min ,\lambda ,\xi }.$ Hence, we provide two types of convergence
estimates: for noisy and noiseless data. The difference here is that we have
the same lateral Cauchy data for $W_{n,\min ,\lambda ,\xi }$ and $W^{\ast }$
in the case of noiseless data, and these data are not necessary zero. On the
other hand, in the noisy case, we have zero lateral Cauchy data for both $%
\widetilde{W}_{n,\min ,\lambda ,\xi }$ and $\widetilde{W}^{\ast }.$

The vector function $W^{\ast }\left( \mathbf{x},t\right) $ is actually
generated by the exact coefficients $\beta ^{\ast }\left( \mathbf{x}\right) $
and $\gamma ^{\ast }\left( \mathbf{x}\right) $ of the SIR system (\ref{2.2}%
)-(\ref{2.4}). We approximate them via the pair of functions $\beta
_{n,\lambda ,\delta }\left( \mathbf{x}\right) ,\gamma _{n,\lambda ,\delta
}\left( \mathbf{x}\right) $ in the case of noisy data and by the pair of
functions $\beta _{n,\lambda }\left( \mathbf{x}\right) ,\gamma _{n,\lambda
}\left( \mathbf{x}\right) $ in the case of noiseless data. Following the end
of section 3, we obtain these pairs of functions from the vector functions $%
\widetilde{W}_{n,\min ,\lambda ,\xi }+F$ in the case of noisy data and from
the vector functions $W_{n,\min ,\lambda ,\xi }$ for the noiseless cases
respectively via applying formulas (\ref{3.11}), (\ref{3.4}) and (\ref{3.6})
sequentially.

\begin{theorem}[convergence for noisy data]\label{Theorem 7.1} {Let conditions (%
  \ref{7.1})-(\ref{7.8}) hold. Let }$\lambda \geq \lambda _{2}${,} {%
  where }$\lambda _{2}\geq 1${\ is defined in (\ref{6.12}). Assume that
  condition (\ref{6.4}) holds. Then the following convergence estimates\ are
  valid:}%
  \begin{equation}
  \left. 
  \begin{array}{c}
  e^{-2\lambda b^{2}}\dint\limits_{Q_{T}}\left( \left\vert \nabla
  h_{n}\right\vert ^{2}+\left\vert h_{n}\right\vert ^{2}\right) \varphi
  _{\lambda }\left( \mathbf{x},t\right) d\mathbf{x}dt\leq  \\ 
  \leq \left( C_{1}/\lambda \right) ^{n}e^{-2\lambda
  b^{2}}\dint\limits_{Q_{T}}\left( \left\vert \nabla h_{0}\right\vert
  ^{2}+\left\vert h_{0}\right\vert ^{2}\right) \varphi _{\lambda }\left( 
  \mathbf{x},t\right) d\mathbf{x}dt+ \\ 
  +\left( C_{1}/\lambda \right) \left( \xi +\delta ^{2}\right) ,\text{ }%
  n=1,2,...,%
  \end{array}%
  \right.   \label{7.10}
  \end{equation}%
  \begin{equation}
  \left. 
  \begin{array}{c}
  e^{-2\lambda b^{2}}\dint\limits_{Q_{T}}\left\vert \nabla \left( \widetilde{W}%
  _{n,\min ,\lambda ,\xi }+F\right) -\nabla W^{\ast }\right\vert ^{2}\varphi
  _{\lambda }\left( \mathbf{x},t\right) d\mathbf{x}dt+ \\ 
  +e^{-2\lambda b^{2}}\dint\limits_{Q_{T}}\left\vert \left( \widetilde{W}%
  _{n,\min ,\lambda ,\xi }+F\right) -W^{\ast }\right\vert ^{2}\varphi
  _{\lambda }\left( \mathbf{x},t\right) d\mathbf{x}dt\leq  \\ 
  \leq \left( C_{1}/\lambda \right) ^{n}e^{-2\lambda
  b^{2}}\dint\limits_{Q_{T}}\left\vert \nabla \left( \widetilde{W}_{0,\min
  ,\lambda ,\xi }+F\right) -\nabla W^{\ast }\right\vert ^{2}\varphi _{\lambda
  }\left( \mathbf{x},t\right) d\mathbf{x}dt+ \\ 
  +\left( C_{1}/\lambda \right) ^{n}e^{-2\lambda
  b^{2}}\dint\limits_{Q_{T}}\left\vert \left( \widetilde{W}_{0,\min ,\lambda
  ,\xi }+F\right) -W^{\ast }\right\vert ^{2}\varphi _{\lambda }\left( \mathbf{x%
  },t\right) d\mathbf{x}dt+ \\ 
  +\left( C_{1}/\lambda \right) \xi +C_{1}\delta ^{2}.%
  \end{array}%
  \right.   \label{7.100}
  \end{equation}
  \begin{equation}
  \left. 
  \begin{array}{c}
  e^{-2\lambda b^{2}}\dint\limits_{Q_{T}}\left[ \left( \beta _{n,\lambda
  ,\delta }\left( \mathbf{x}\right) -\beta ^{\ast }\left( \mathbf{x}\right)
  \right) ^{2}+\left( \gamma _{n,\lambda ,\delta }\left( \mathbf{x}\right)
  -\gamma ^{\ast }\left( \mathbf{x}\right) \right) ^{2}\right] \varphi
  _{\lambda }\left( \mathbf{x},t\right) d\mathbf{x}dt\leq  \\ 
  \leq \left( C_{1}/\lambda \right) ^{n}e^{-2\lambda
  b^{2}}\dint\limits_{Q_{T}}\left\vert \nabla \left( \widetilde{W}_{0,\min
  ,\lambda ,\xi }+F\right) -\nabla W^{\ast }\right\vert ^{2}\varphi _{\lambda
  }\left( \mathbf{x},t\right) d\mathbf{x}dt+ \\ 
  +\left( C_{1}/\lambda \right) ^{n}e^{-2\lambda
  b^{2}}\dint\limits_{Q_{T}}\left\vert \left( \widetilde{W}_{0,\min ,\lambda
  ,\xi }+F\right) -W^{\ast }\right\vert ^{2}\varphi _{\lambda }\left( \mathbf{x%
  },t\right) d\mathbf{x}dt+ \\ 
  +\left( C_{1}/\lambda \right) \xi +C_{1}\delta ^{2}.%
  \end{array}%
  \right.   \label{7.200}
  \end{equation}
\end{theorem}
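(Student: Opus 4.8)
The plan is to reduce all three estimates to a single contraction recurrence for the Carleman-weighted error of the minimizers, and then to unwind it. Throughout, I abbreviate $\|u\|_\lambda^2 = e^{-2\lambda b^2}\int_{Q_T}(|\nabla u|^2 + u^2)\varphi_\lambda\,d\mathbf{x}\,dt$ for the weighted quantity on the left of \eqref{7.10}, and set $\rho = F - F^*$, so that $\|\rho\|_{H_6^4(Q_T)} < \delta$ by \eqref{7.3} and $\widetilde W^* + F = W^* + \rho$ by \eqref{7.6}. First I would record the obvious analogue of \Cref{Theorem 6.1} for the shifted functional $I_{n,\lambda,\xi}$ of \eqref{6.110} on $\overline{B_0(2M)}$, valid for $\lambda \ge \lambda_2$ by \eqref{6.12}: strong convexity with constant $C_1\lambda$ in the weighted norm, together with the first-order minimality inequality (the analogue of \eqref{6.6}) for its unique minimizer $\widetilde W_{n,\min,\lambda,\xi}$. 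Applying the strong convexity inequality with $W_1 = \widetilde W_{n,\min,\lambda,\xi}$ and $W_2 = \widetilde W^* \in \overline{B_0(2M)}$ (see \eqref{7.7}), discarding the nonnegative value $I_{n,\lambda,\xi}(\widetilde W_{n,\min,\lambda,\xi})$ and using the minimality inequality to drop the Fr\'echet-derivative term, yields the one-sided bound $C_1\lambda\|h_n\|_\lambda^2 \le I_{n,\lambda,\xi}(\widetilde W^*)$.

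The heart of the argument is to estimate $I_{n,\lambda,\xi}(\widetilde W^*)$ from above. Since $L$ is linear and $W^*$ solves $L(W^*) = -Y(W^*, S^*)$ by \eqref{7.1}, substituting $\widetilde W^* + F = W^* + \rho$ into \eqref{6.110} collapses the residual in the integrand to $L(\rho) + \left[Y(\widetilde W_{n-1,\min,\lambda}+F, S) - Y(W^*, S^*)\right]$. I would bound the square of this residual by $2(L(\rho))^2$ plus twice the square of the $Y$-difference, and integrate against $e^{-2\lambda b^2}\varphi_\lambda \le 1$ (by \eqref{4.2}). The $L(\rho)$ contribution is $O(\delta^2)$ because $L$ maps $H_6^4$ boundedly into $L_2$. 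For the $Y$-difference I apply the Lipschitz estimate \eqref{5.5} at the arguments $\widetilde W_{n-1,\min,\lambda}+F$ and $W^*$, using $(\widetilde W_{n-1,\min,\lambda}+F) - W^* = h_{n-1} + \rho$; this produces the instantaneous terms $|h_{n-1}+\rho|^2 + |\nabla(h_{n-1}+\rho)|^2$, the two Volterra terms of \eqref{3.13}, and $|S-S^*|^2$. The Volterra terms are absorbed with a gain of $1/\lambda$ by \eqref{4.4} of \Cref{Theorem 4.2}, the data term is $O(\delta^2)$ via \eqref{7.5}, and the regularization term $\xi\|\widetilde W^* + F\|_{H_6^4(Q_T)}^2$ is $\le C_1\xi$ since $\widetilde W^* + F \in B(3M)$. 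Collecting these and using $\|h_{n-1}+\rho\|_\lambda^2 \le 2\|h_{n-1}\|_\lambda^2 + C_1\delta^2$ gives $C_1\lambda\|h_n\|_\lambda^2 \le C_1\|h_{n-1}\|_\lambda^2 + C_1(\xi + \delta^2)$, that is, the recurrence $\|h_n\|_\lambda^2 \le (C_1/\lambda)\|h_{n-1}\|_\lambda^2 + (C_1/\lambda)(\xi + \delta^2)$.

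Finally I would iterate this recurrence. For $\lambda \ge \lambda_2$ large enough that $C_1/\lambda \le 1/2$, the geometric series of the accumulated noise is bounded by $2$, which gives \eqref{7.10}. Estimate \eqref{7.100} then follows from the triangle inequality $\|(\widetilde W_{n,\min,\lambda,\xi}+F) - W^*\|_\lambda^2 = \|h_n + \rho\|_\lambda^2 \le 2\|h_n\|_\lambda^2 + C_1\delta^2$, on inserting \eqref{7.10} and re-expressing $\|h_0\|_\lambda^2$ through the $n=0$ right-hand side of \eqref{7.100} (the extra $C_1\delta^2$ is absorbed into the stated term, and the bounded prefactor of the geometric part into $C_1^n$). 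For \eqref{7.200} I would use the reconstruction formulas \eqref{3.6}, \eqref{3.11}: writing $\beta = (W_1 - \int_{T/2}^t W_4\,d\tau)s_1 + s_2$ and similarly for $\gamma$, the pointwise difference $\beta_{n,\lambda,\delta} - \beta^*$ splits into an instantaneous component of $h_n + \rho$, a Volterra integral of such a component, and differences of the $s_i$ that are $O(\delta)$ by \eqref{7.5}. Squaring, integrating against $e^{-2\lambda b^2}\varphi_\lambda$, and controlling the Volterra part by \Cref{Theorem 4.2}, I bound the left side of \eqref{7.200} by $C_1$ times the left side of \eqref{7.100} plus $C_1\delta^2$; combined with \eqref{7.100} this yields \eqref{7.200}.

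The main obstacle is the precise accounting in the second paragraph that produces the contraction factor $C_1/\lambda$. The delicate point is that the frozen nonlinearity $Y$ depends on $\nabla W$ instantaneously (see \eqref{3.13}, \eqref{5.5}), so the upper bound on $I_{n,\lambda,\xi}(\widetilde W^*)$ genuinely contains $\|\nabla h_{n-1}\|^2$ in the Carleman weight with an $O(1)$ constant. This is offset only because the strong convexity estimate \eqref{6.5} controls $\|\nabla h_n\|^2$ in the same weight with the much larger constant $C_1\lambda$, so that dividing by $\lambda$ converts the first-order gradient dependence into a genuine contraction. Measuring every error contribution in the single weighted norm $\|\cdot\|_\lambda$ (rather than passing to an unweighted norm), so that the $\lambda$ from the Carleman estimate and the $O(1)$ from the Lipschitz bound are compared on the same footing, is what makes the scheme close.
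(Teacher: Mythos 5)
Your proposal is correct and follows essentially the same route as the paper's proof: strong convexity of $I_{n,\lambda,\xi}$ plus the variational inequality at the minimizer gives $C_1\lambda\left\Vert h_n\right\Vert_\lambda^2\leq I_{n,\lambda,\xi}\left(\widetilde{W}^{\ast}\right)$, the residual is collapsed via \eqref{7.1} and bounded through the Lipschitz estimate \eqref{5.5}, the Volterra estimate of \Cref{Theorem 4.2} and the noise bounds \eqref{7.3}, \eqref{7.5}, yielding the recurrence $\left\Vert h_n\right\Vert_\lambda^2\leq\left(C_1/\lambda\right)\left\Vert h_{n-1}\right\Vert_\lambda^2+\left(C_1/\lambda\right)\left(\xi+\delta^2\right)$, which is then unwound by a geometric series, with \eqref{7.100} and \eqref{7.200} following by the triangle inequality with $F-F^{\ast}$ and the reconstruction formulas \eqref{3.11}, \eqref{3.4}, \eqref{3.6}. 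The paper phrases the iteration as the case $n=1$ followed by induction rather than a single recurrence, but the content is identical.
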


\begin{proof} First, we consider the case $n=1.$ Using the analog of (\ref%
  {6.5}) for $I_{n,\lambda ,\xi },$ we obtain%
  \begin{equation}
  \left. 
  \begin{array}{c}
  I_{1,\lambda ,\xi }\left( \widetilde{W}^{\ast }\right) -I_{1,\lambda ,\xi
  }\left( \widetilde{W}_{n,\min ,\lambda ,\xi }\right) - \\ 
  -I_{1,\lambda ,\xi }^{\prime }\left( \widetilde{W}_{n,\min ,\lambda ,\xi
  }\right) \left( \widetilde{W}^{\ast }-\widetilde{W}_{n,\min ,\lambda ,\xi
  }\right) \geq  \\ 
  \geq C_{1}\lambda \dint\limits_{Q_{T}}\left[ \left( \nabla h_{1}\right)
  ^{2}+h_{1}^{2}\right] \varphi _{\lambda }\left( \mathbf{x},t\right) d\mathbf{%
  x}dt.%
  \end{array}%
  \right.   \label{7.11}
  \end{equation}%
  Since $-I_{1,\lambda ,\xi }\left( \widetilde{W}_{n,\min ,\lambda ,\xi
  }\right) \leq 0$ and also since by (\ref{6.6}) 
  \begin{equation*}
  -I_{1,\lambda ,\xi }^{\prime }\left( \widetilde{W}_{n,\min ,\lambda ,\xi
  }\right) \left( \widetilde{W}^{\ast }-\widetilde{W}_{n,\min ,\lambda ,\xi
  }\right) \leq 0,
  \end{equation*}%
  then (\ref{7.11}) implies%
  \begin{equation}
  e^{-2\lambda b^{2}}\dint\limits_{Q_{T}}\left[ \left( \nabla h_{1}\right)
  ^{2}+h_{1}^{2}\right] \varphi _{\lambda }\left( \mathbf{x},t\right) d\mathbf{%
  x}dt\leq \frac{C_{1}}{\lambda }I_{1,\lambda ,\xi }\left( \widetilde{W}^{\ast
  }\right) .  \label{7.12}
  \end{equation}
  
  We now estimate $I_{1,\lambda ,\xi }\left( \widetilde{W}^{\ast }\right) $
  from the above. By (\ref{6.110}) and (\ref{7.6})%
  \begin{equation}
  \left. 
  \begin{array}{c}
  I_{1,\lambda ,\xi }\left( \widetilde{W}^{\ast }\right) =J_{1,\lambda ,\xi
  }\left( \widetilde{W}^{\ast }+F\right) = \\ 
  =e^{-2\lambda b^{2}}\int\limits_{Q_{T}}\left[ L\left( \widetilde{W}^{\ast
  }+F\right) +Y\left( \widetilde{W}_{0,\min ,\lambda }+F,S\right) \right]
  ^{2}\varphi _{\lambda }d\mathbf{x}dt\mathbb{+} \\ 
  +\mathbb{\xi }\left\Vert \widetilde{W}^{\ast }+F\right\Vert
  _{H_{6}^{4}\left( Q_{T}\right) }^{2}.%
  \end{array}%
  \right.   \label{7.13}
  \end{equation}%
  Next consider the expression in the second line of (\ref{7.13}). We have:%
  \begin{equation*}
  \left. 
  \begin{array}{c}
  L\left( \widetilde{W}^{\ast }+F\right) +Y\left( \widetilde{W}_{0,\min
  ,\lambda }+F,S\right) =L\left( W^{\ast }\right) + \\ 
  +L\left( F-F^{\ast }\right) +Y\left( \widetilde{W}_{0,\min ,\lambda
  }+F,S\right) = \\ 
  =\left[ L\left( W^{\ast }\right) +Y\left( W^{\ast },S^{\ast }\right) \right]
  +L\left( F-F^{\ast }\right)  \\ 
  +\left[ Y\left( \widetilde{W}_{0,\min ,\lambda }+F,S\right) -Y\left( 
  \widetilde{W}^{\ast }+F^{\ast },S^{\ast }\right) \right] .%
  \end{array}%
  \right. 
  \end{equation*}%
  By (\ref{7.1}) $L\left( W^{\ast }\right) +Y\left( W^{\ast },S^{\ast }\right)
  =0.$ Hence, 
  \begin{equation}
  \left. 
  \begin{array}{c}
  L\left( \widetilde{W}^{\ast }+F\right) +Y\left( W_{0,\min ,\lambda
  },S\right) =L\left( F-F^{\ast }\right) + \\ 
  +\left[ Y\left( \widetilde{W}_{0,\min ,\lambda }+F,S\right) -Y\left( 
  \widetilde{W}^{\ast }+F^{\ast },S^{\ast }\right) \right] .%
  \end{array}%
  \right.   \label{7.14}
  \end{equation}%
  Next, by (\ref{7.3})%
  \begin{equation}
  \left\vert L\left( F-F^{\ast }\right) \right\vert \leq C_{1}\delta ,
  \label{7.15}
  \end{equation}%
  and by (\ref{5.5}), (\ref{7.3}) and (\ref{7.5}) 
  \begin{equation}
  \left. 
  \begin{array}{c}
  \left\vert Y\left( \widetilde{W}_{0,\min ,\lambda }+F,S\right) -Y\left( 
  \widetilde{W}^{\ast }+F^{\ast },S^{\ast }\right) \right\vert \leq  \\ 
  \leq C_{1}\left( \left\vert \widetilde{W}_{0,\min ,\lambda }-\widetilde{W}%
  ^{\ast }\right\vert \left( \mathbf{x},t\right) +\left\vert \nabla \widetilde{%
  W}_{0,\min ,\lambda }-\nabla \widetilde{W}^{\ast }\right\vert \left( \mathbf{%
  x},t\right) \right) + \\ 
  +C_{1}\left\vert \int\limits_{T/2}^{t}\left\vert \widetilde{W}_{0,\min
  ,\lambda }-\widetilde{W}^{\ast }\right\vert \left( \mathbf{x},\tau \right)
  d\tau \right\vert + \\ 
  +C_{1}\left\vert \int\limits_{T/2}^{t}\left\vert \nabla \widetilde{W}%
  _{0,\min ,\lambda }-\nabla \widetilde{W}^{\ast }\right\vert \left( \mathbf{x}%
  ,\tau \right) d\tau \right\vert +C_{1}\delta .%
  \end{array}%
  \right.   \label{7.16}
  \end{equation}%
  By Theorem 4.2, Cauchy-Schwarz inequality, (\ref{4.2}), (\ref{7.8}) and (\ref%
  {7.14})-(\ref{7.16}) 
  \begin{equation}
  \left. 
  \begin{array}{c}
  e^{-2\lambda b^{2}}\int\limits_{Q_{T}}\left[ L\left( \widetilde{W}^{\ast
  }+F\right) +Y\left( \widetilde{W}_{0,\min ,\lambda }+F,S\right) \right]
  ^{2}\varphi _{\lambda }d\mathbf{x}dt\leq  \\ 
  \leq C_{1}e^{-2\lambda b^{2}}\int\limits_{Q_{T}}\left[ \left\vert \nabla 
  \widetilde{W}_{0,\min ,\lambda }-\nabla \widetilde{W}^{\ast }\right\vert
  ^{2}+\left\vert \widetilde{W}_{0,\min ,\lambda }-\widetilde{W}^{\ast
  }\right\vert ^{2}\right] \varphi _{\lambda }d\mathbf{x}dt+ \\ 
  +C_{1}\delta ^{2}=C_{1}e^{-2\lambda b^{2}}\int\limits_{Q_{T}}\left(
  \left\vert \nabla h_{0}\right\vert ^{2}+\left\vert h_{0}\right\vert
  ^{2}\right) \varphi _{\lambda }\left( \mathbf{x},t\right) d\mathbf{x}%
  dt+C_{1}\delta ^{2}.%
  \end{array}%
  \right.   \label{7.17}
  \end{equation}%
  Finally 
  \begin{equation}
  \left. 
  \begin{array}{c}
  \mathbb{\xi }\left\Vert \widetilde{W}^{\ast }+F\right\Vert _{H_{6}^{4}\left(
  Q_{T}\right) }^{2}=\mathbb{\xi }\left\Vert W^{\ast }+\left( F-F^{\ast
  }\right) \right\Vert _{H_{6}^{4}\left( Q_{T}\right) }^{2}\leq  \\ 
  \leq 2\xi \left\Vert W^{\ast }\right\Vert _{H_{6}^{4}\left( Q_{T}\right)
  }^{2}+2\delta ^{2}\leq 2\xi M^{2}+2\delta ^{2}.%
  \end{array}%
  \right.   \label{7.18}
  \end{equation}%
  Hence, (\ref{7.13})-(\ref{7.18}) imply%
  \begin{equation*}
  I_{1,\lambda ,\xi }\left( \widetilde{W}^{\ast }\right) \leq e^{-2\lambda
  b^{2}}\dint\limits_{Q_{T}}\left( \left\vert \nabla h_{0}\right\vert
  ^{2}+\left\vert h_{0}\right\vert ^{2}\right) \varphi _{\lambda }\left( 
  \mathbf{x},t\right) d\mathbf{x}dt+C_{1}\left( \xi +\delta ^{2}\right) .
  \end{equation*}%
  Combining this with (\ref{7.12}), we obtain%
  \begin{equation*}
  \left. 
  \begin{array}{c}
  e^{-2\lambda b^{2}}\dint\limits_{Q_{T}}\left( \left\vert \nabla
  h_{1}\right\vert ^{2}+\left\vert h_{1}\right\vert ^{2}\right) \varphi
  _{\lambda }\left( \mathbf{x},t\right) d\mathbf{x}dt\leq  \\ 
  \leq \left( C_{1}/\lambda \right) e^{-2\lambda
  b^{2}}\dint\limits_{Q_{T}}\left( \left\vert \nabla h_{0}\right\vert
  ^{2}+\left\vert h_{0}\right\vert ^{2}\right) \varphi _{\lambda }\left( 
  \mathbf{x},t\right) d\mathbf{x}dt+\left( C_{1}/\lambda \right) \left( \xi
  +\delta ^{2}\right) ,%
  \end{array}%
  \right. 
  \end{equation*}%
  which is estimate (\ref{7.10}) at $n=1.$ Continuing these estimates via the
  mathematical induction, we obtain 
  \begin{equation}
  \left. 
  \begin{array}{c}
  e^{-2\lambda b^{2}}\dint\limits_{Q_{T}}\left( \left\vert \nabla
  h_{n}\right\vert ^{2}+\left\vert h_{n}\right\vert ^{2}\right) \varphi
  _{\lambda }\left( \mathbf{x},t\right) d\mathbf{x}dt\leq  \\ 
  \leq \left( C_{1}/\lambda \right) ^{n}e^{-2\lambda
  b^{2}}\dint\limits_{Q_{T}}\left( \left\vert \nabla h_{0}\right\vert
  ^{2}+\left\vert h_{0}\right\vert ^{2}\right) \varphi _{\lambda }\left( 
  \mathbf{x},t\right) d\mathbf{x}dt+ \\ 
  +\left( \dsum\limits_{k=1}^{n}\left( C_{1}/\lambda \right) ^{k}\right)
  \left( \xi +\delta ^{2}\right) .%
  \end{array}%
  \right.   \label{7.19}
  \end{equation}%
  Since 
  \begin{equation*}
  \dsum\limits_{k=1}^{\infty }\left( \frac{C_{1}}{\lambda }\right) ^{k}=\frac{%
  C_{1}}{\lambda }\cdot \frac{1}{1-\left( C_{1}/\lambda \right) }<2\frac{C_{1}%
  }{\lambda },
  \end{equation*}%
  then (\ref{7.19}) implies the first target estimate (\ref{7.10}).
  
  Using $h_{n}=\widetilde{W}_{n,\min ,\lambda ,\xi }-\widetilde{W}^{\ast }=%
  \left[ \left( \widetilde{W}_{n,\min ,\lambda ,\xi }+F\right) -W^{\ast }%
  \right] -\left( F-F^{\ast }\right) $ and (\ref{7.3}), we obtain%
  \begin{equation}
  \left. 
  \begin{array}{c}
  \left\vert \nabla h_{n}\right\vert ^{2}+\left\vert h_{n}\right\vert ^{2}\geq
  \left\vert \nabla \left( \widetilde{W}_{n,\min ,\lambda ,\xi }+F\right)
  -\nabla W^{\ast }\right\vert ^{2}+ \\ 
  +\left\vert \left( \widetilde{W}_{n,\min ,\lambda ,\xi }+F\right) -W^{\ast
  }\right\vert ^{2}-\delta ^{2}.%
  \end{array}%
  \right.   \label{7.20}
  \end{equation}%
  The second target estimate (\ref{7.100}) easily follows from (\ref{7.10})
  and (\ref{7.20}). Estimate (\ref{7.100}) being combined with (\ref{3.11}), (%
  \ref{3.4}) and (\ref{3.6}) leads to (\ref{7.200}).
\end{proof}

\begin{theorem}[convergence for noiseless data]\label{Theorem 7.2}{Let }$\lambda
  \geq \lambda _{1}${, where }$\lambda _{1}\geq 1${\ was found in
  Theorem 6.1. Let the noise level }$\delta =0${\ and conditions (\ref%
  {6.4}), (\ref{7.1})-(\ref{7.2}) as well as the notation in the second line
  of (\ref{7.8}) hold. Then the following convergence estimates\ are valid:}%
  \begin{equation}
  \left. 
  \begin{array}{c}
  e^{-2\lambda b^{2}}\dint\limits_{Q_{T}}\left( \left\vert \nabla
  s_{n}\right\vert ^{2}+\left\vert s_{n}\right\vert ^{2}\right) \varphi
  _{\lambda }\left( \mathbf{x},t\right) d\mathbf{x}dt\leq  \\ 
  \leq \left( C_{1}/\lambda \right) ^{n}e^{-2\lambda
  b^{2}}\dint\limits_{Q_{T}}\left( \left\vert \nabla s_{0}\right\vert
  ^{2}+\left\vert s_{0}\right\vert ^{2}\right) \varphi _{\lambda }\left( 
  \mathbf{x},t\right) d\mathbf{x}dt+\left( C_{1}/\lambda \right) \xi ,\text{ }%
  n=1,2,...%
  \end{array}%
  \right.   \label{7.101}
  \end{equation}%
  \begin{equation}
  \left. 
  \begin{array}{c}
  e^{-2\lambda b^{2}}\dint\limits_{Q_{T}}\left[ \left( \beta _{n,\lambda
  }\left( \mathbf{x}\right) -\beta ^{\ast }\left( \mathbf{x}\right) \right)
  ^{2}+\left( \gamma _{n,\lambda }\left( \mathbf{x}\right) -\gamma ^{\ast
  }\left( \mathbf{x}\right) \right) ^{2}\right] \varphi _{\lambda }\left( 
  \mathbf{x},t\right) d\mathbf{x}dt\leq  \\ 
  \leq \left( C_{1}/\lambda \right) ^{n}e^{-2\lambda
  b^{2}}\dint\limits_{Q_{T}}\left( \left\vert \nabla s_{0}\right\vert
  ^{2}+\left\vert s_{0}\right\vert ^{2}\right) \varphi _{\lambda }\left( 
  \mathbf{x},t\right) d\mathbf{x}dt+\left( C_{1}/\lambda \right) \xi ,\text{ }%
  n=1,2,...%
  \end{array}%
  \right.   \label{7.300}
  \end{equation}
\end{theorem}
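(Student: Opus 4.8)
The plan is to mirror the structure of the proof of \Cref{Theorem 7.1} while exploiting the simplification afforded by the absence of noise. The decisive observation is that when $\delta =0$ the boundary data defining $\overline{B\left( M\right) }$ coincide with the exact data, i.e. $G_{0}=G_{0}^{\ast }$ and $G_{1}=G_{1}^{\ast }$, so that $B^{\ast }\left( M\right) =B\left( M\right) $. Since $W_{n,\min ,\lambda ,\xi }\in \overline{B\left( M\right) }$ and $W^{\ast }\in B^{\ast }\left( M\right) =B\left( M\right) $ share these lateral Cauchy data, the difference $s_{n}=W_{n,\min ,\lambda ,\xi }-W^{\ast }$ satisfies $s_{n}\mid _{\Gamma _{T}}=0$ and $\partial _{n}s_{n}\mid _{S_{T}}=0$, whence $s_{n}\in H_{6,0}^{4}\left( Q_{T}\right) $. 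Consequently the Carleman-based strong convexity estimate (\ref{6.5}) may be applied directly to $s_{n}$, without the shift by $F$ that was forced in \Cref{Theorem 7.1}; this is precisely why here one may work with $J_{n,\lambda ,\xi }$ and $W_{n,\min ,\lambda ,\xi }$ themselves rather than with $I_{n,\lambda ,\xi }$.

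First I would treat the base case $n=1$. Apply (\ref{6.5}) for $J_{1,\lambda ,\xi }$ at the pair $W_{1}=W_{1,\min ,\lambda ,\xi }$, $W_{2}=W^{\ast }$, both lying in $\overline{B\left( M\right) }$ by (\ref{7.01}). The right-hand side produces $C_{1}\lambda \int_{Q_{T}}\left( \left\vert \nabla s_{1}\right\vert ^{2}+\left\vert s_{1}\right\vert ^{2}\right) \varphi _{\lambda }\,d\mathbf{x}dt$. On the left, $-J_{1,\lambda ,\xi }\left( W_{1,\min ,\lambda ,\xi }\right) $ is nonpositive, and by the minimizer inequality (\ref{6.6}) (with $W=W^{\ast }$) so is $-J_{1,\lambda ,\xi }^{\prime }\left( W_{1,\min ,\lambda ,\xi }\right) \left( W^{\ast }-W_{1,\min ,\lambda ,\xi }\right) $. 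Dropping both and using $e^{-2\lambda b^{2}}\leq 1$ to insert the balancing factor yields
\[
e^{-2\lambda b^{2}}\int_{Q_{T}}\left( \left\vert \nabla s_{1}\right\vert ^{2}+\left\vert s_{1}\right\vert ^{2}\right) \varphi _{\lambda }\,d\mathbf{x}dt\leq \frac{C_{1}}{\lambda }J_{1,\lambda ,\xi }\left( W^{\ast }\right) .
\]

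Next I would bound $J_{1,\lambda ,\xi }\left( W^{\ast }\right) $ from above. Since $\delta =0$ gives $S=S^{\ast }$, the exact equation (\ref{7.1}) reads $L\left( W^{\ast }\right) +Y\left( W^{\ast },S^{\ast }\right) =0$, so inside the functional $L\left( W^{\ast }\right) +Y\left( W_{0,\min ,\lambda },S\right) =Y\left( W_{0,\min ,\lambda },S^{\ast }\right) -Y\left( W^{\ast },S^{\ast }\right) $. The Lipschitz estimate (\ref{5.5}) (with the $\delta $-term absent) controls this by $\left\vert s_{0}\right\vert $, $\left\vert \nabla s_{0}\right\vert $ and their Volterra integrals, where $s_{0}=W_{0,\min ,\lambda }-W^{\ast }$; invoking the Volterra estimate (\ref{4.4}) of \Cref{Theorem 4.2} absorbs the integral contributions into $C_{1}\int_{Q_{T}}\left( \left\vert \nabla s_{0}\right\vert ^{2}+\left\vert s_{0}\right\vert ^{2}\right) \varphi _{\lambda }\,d\mathbf{x}dt$, while the Tikhonov term obeys $\xi \left\Vert W^{\ast }\right\Vert _{H_{6}^{4}\left( Q_{T}\right) }^{2}\leq \xi M^{2}=C_{1}\xi $. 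Combining these with the previous display gives (\ref{7.101}) for $n=1$. The identical argument with $W_{n-1,\min ,\lambda }$ in place of $W_{0,\min ,\lambda }$ yields the one-step recursion $e^{-2\lambda b^{2}}\int_{Q_{T}}(|\nabla s_{n}|^{2}+|s_{n}|^{2})\varphi _{\lambda }\,d\mathbf{x}dt\leq (C_{1}/\lambda )\,e^{-2\lambda b^{2}}\int_{Q_{T}}(|\nabla s_{n-1}|^{2}+|s_{n-1}|^{2})\varphi _{\lambda }\,d\mathbf{x}dt+(C_{1}/\lambda )\xi $; iterating and summing $\sum_{k\geq 1}(C_{1}/\lambda )^{k}<2C_{1}/\lambda $ (valid once $\lambda \geq \lambda _{1}$ is enlarged so that $C_{1}/\lambda <1/2$) produces (\ref{7.101}). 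Finally (\ref{7.300}) follows by inserting the reconstruction formulas (\ref{3.6}), (\ref{3.4}), (\ref{3.11}), which express $\beta _{n,\lambda }-\beta ^{\ast }$ and $\gamma _{n,\lambda }-\gamma ^{\ast }$ pointwise through $s_{n}$, $\nabla s_{n}$ and Volterra integrals thereof, so one more application of \Cref{Theorem 4.2} reduces the coefficient error to the left-hand side of (\ref{7.101}).

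The only point demanding genuine care is the verification that $s_{n}\in H_{6,0}^{4}\left( Q_{T}\right) $, which legitimizes applying (\ref{6.5}) directly to the difference of a minimizer and the exact solution; I expect no obstacle beyond bookkeeping of the generic constant $C_{1}$, which as usual absorbs the bounded factor $e^{-2\lambda b^{2}}\leq 1$ together with the fixed powers of $\lambda $ arising from the Volterra and trace estimates.
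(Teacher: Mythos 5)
Your proposal is correct and takes essentially the same approach as the paper: the paper omits this proof altogether, stating it is ``completely similar with the proof of Theorem 7.1,'' and your argument is precisely that adaptation. In particular, your key observation --- that for $\delta =0$ the minimizers $W_{n,\min ,\lambda ,\xi }$ and $W^{\ast }$ share the same (generally nonzero) lateral Cauchy data, so that $s_{n}\in H_{6,0}^{4}\left( Q_{T}\right) $ and the strong convexity estimate (\ref{6.5}) applies to $J_{n,\lambda ,\xi }$ directly, without the shift by $F$ and the functional $I_{n,\lambda ,\xi }$ --- is exactly the distinction the paper itself draws between the noisy and noiseless cases in subsection~\ref{sec:7.3}.
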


We omit the proof of this theorem since it is completely similar with the
proof of Theorem 7.1. It follows from (\ref{6.4}) and\emph{\ }(\ref{7.100})
that the sequence $\left\{ \widetilde{W}_{n,\min ,\lambda ,\xi }+F\right\}
_{n=0}^{\infty }$ converges to the exact solution $W^{\ast }$ in the case of
noisy data, as long as the level of noise in the data $\delta \rightarrow 0$%
. And (\ref{7.101}) implies that the sequence $\left\{ W_{n,\min ,\lambda
,\xi }\right\} _{n=0}^{\infty }$ also converges to the exact solution $%
W^{\ast }$ in the case of noiseless data. Estimates (\ref{7.200}) and (\ref%
{7.300}) imply that the same statements are true for the target
reconstructed coefficients. In all cases convergence is in terms of
integrals with the CWF $\varphi _{\lambda }\left( \mathbf{x},t\right) $ in
them. Finally, Theorems 6.2, 6.3, 7.1 and 7.2 imply that we have constructed
in section 5 a globally convergent numerical method for our CIP.

We now provide more explicit convergence estimates for the unknown
coefficients. Recall that the domain $Q_{\alpha T}$ is the one defined in (%
\ref{2.1}) for the number $\alpha $ in (\ref{1}). We also recall that by the
regularization theory, the regularization parameter should depend on the
noise level \cite{T}.

\begin{theorem}\label{Theorem 7.3}
  {Let conditions of Theorem 7.1 hold, so as (\ref%
  {1}). Let }%
  \begin{equation}
  T^{2}>\frac{8b^{2}}{1-2\alpha ^{2}}.  \label{7.301}
  \end{equation}%
  {Define two numbers }$m,s${\ as}%
  \begin{equation}
  m=\alpha ^{2}\frac{T^{2}}{2}+2b^{2},\text{ }s=\frac{T^{2}}{4}\left[ \left(
  1-2\alpha ^{2}\right) -\frac{8b^{2}}{T^{2}}\right] .  \label{7.302}
  \end{equation}%
  {Suppose that the number }$\delta _{0}\in \left( 0,1\right) ${\ is
  so small that} 
  \begin{equation}
  \ln \left( \delta _{0}^{-1/m}\right) \geq \lambda _{2}.  \label{7.303}
  \end{equation}%
  {For every }$\delta \in \left( 0,\delta _{0}\right) ,${\ let }$%
  \lambda =\lambda \left( \delta \right) =\ln \left( \delta ^{-1/m}\right) .$%
  {\ Following (\ref{6.4}), choose the regularization parameter }$\xi $ 
  {as}%
  \begin{equation}
  \xi \left( \delta \right) =2\exp \left( -\lambda \left( \delta \right) \frac{%
  T^{2}}{4}\right) .  \label{7.304}
  \end{equation}%
  {Then the following convergence estimate holds:}%
  \begin{equation}
  \left. 
  \begin{array}{c}
  \left\Vert \beta _{n,\lambda \left( \delta \right) ,\delta }-\beta ^{\ast
  }\right\Vert _{L_{2}\left( \Omega \right) }^{2}+\left\Vert \gamma
  _{n,\lambda \left( \delta \right) ,\delta }-\gamma ^{\ast }\right\Vert
  _{L_{2}\left( \Omega \right) }^{2}\leq  \\ 
  \leq \alpha ^{-1}\left( C_{1}/\lambda \left( \delta \right) \right) ^{n}\exp
  \left( \left[ \lambda \left( \delta \right) \left( \alpha
  ^{2}T^{2}/2+2b^{2}\right) \right] \right) \times  \\ 
  \times \left[ \left\Vert \nabla \left( \widetilde{W}_{0,\min ,\lambda ,\xi
  }+F\right) -\nabla W^{\ast }\right\Vert _{L_{2}^{2}\left( Q_{T}\right)
  }^{2}+\left\Vert \left( \widetilde{W}_{0,\min ,\lambda ,\xi }+F\right)
  -W^{\ast }\right\Vert _{L^{2}\left( Q_{T}\right) }^{2}\right] + \\ 
  +C_{1}\alpha ^{-1}\delta ^{\rho },\text{ }%
  \end{array}%
  \right.   \label{7.305}
  \end{equation}%
  {for any} $\alpha \in \left( 0,1/\sqrt{2}\right) ,$ {where }$\rho
  =\max \left( 1,s/m\right) .$
\end{theorem}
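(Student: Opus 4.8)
The plan is to obtain \eqref{7.305} from the weighted estimate \eqref{7.200} of \Cref{Theorem 7.1} by converting the space--time weighted $L_2$ norm over $Q_T$ into an honest $L_2(\Omega)$ norm of the coefficient error. The decisive point is that $\beta_{n,\lambda,\delta},\gamma_{n,\lambda,\delta},\beta^{\ast},\gamma^{\ast}$ depend only on $\mathbf{x}$, so the integrand on the left of \eqref{7.200} is independent of $t$. First I would restrict the integral from $Q_T$ to the slab $Q_{\alpha T}=\Omega\times\left(\tfrac{T}{2}(1-\alpha),\tfrac{T}{2}(1+\alpha)\right)$, which is legitimate because the integrand is nonnegative, and then use the lower bound $\varphi_{\lambda}\ge\exp\left(-\lambda\alpha^{2}T^{2}/2\right)$ on $Q_{\alpha T}$ from \eqref{4.2}. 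Since this slab has temporal length $\alpha T$ and the integrand is $t$-free, the $t$-integration yields exactly $\alpha T$ times $\left\Vert\beta_{n,\lambda,\delta}-\beta^{\ast}\right\Vert_{L_2(\Omega)}^{2}+\left\Vert\gamma_{n,\lambda,\delta}-\gamma^{\ast}\right\Vert_{L_2(\Omega)}^{2}$.

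Rearranging, the $L_2(\Omega)$ error is bounded by the right-hand side of \eqref{7.200} multiplied by $\alpha^{-1}T^{-1}\exp\left(2\lambda b^{2}\right)\exp\left(\lambda\alpha^{2}T^{2}/2\right)=\alpha^{-1}T^{-1}\exp\left(\lambda m\right)$, with $m=\alpha^{2}T^{2}/2+2b^{2}$ exactly as in \eqref{7.302}. For the two non-noise terms on the right of \eqref{7.200} I would apply the complementary upper bound $\varphi_{\lambda}\le\exp\left(2\lambda b^{2}\right)$ on all of $Q_T$, again from \eqref{4.2}; this annihilates the normalizer $e^{-2\lambda b^{2}}$ and replaces each weighted integral by the plain square norm, producing $\left\Vert\nabla(\widetilde{W}_{0,\min,\lambda,\xi}+F)-\nabla W^{\ast}\right\Vert_{L_2(Q_T)}^{2}$ and $\left\Vert(\widetilde{W}_{0,\min,\lambda,\xi}+F)-W^{\ast}\right\Vert_{L_2(Q_T)}^{2}$. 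After absorbing $T^{-1}$ into the generic constant $C_{1}$, these reproduce the second and third lines of \eqref{7.305} verbatim, the surviving exponential being $\exp\left(\lambda(\alpha^{2}T^{2}/2+2b^{2})\right)$.

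The core of the proof is the two noise terms $(C_{1}/\lambda)\xi+C_{1}\delta^{2}$, now weighted by $\exp(\lambda m)$. Here I would insert the prescribed choices $\lambda(\delta)=\ln\left(\delta^{-1/m}\right)$ and $\xi(\delta)=2\exp\left(-\lambda(\delta)T^{2}/4\right)$. A one-line computation gives $\exp(\lambda m)=\delta^{-1}$ --- this is precisely why $m$ is defined as it is --- so the $C_{1}\delta^{2}$ part collapses to $C_{1}\delta$, while $\exp(\lambda m)\,\xi=2\,\delta^{\,T^{2}/(4m)-1}=2\,\delta^{\,s/m}$, the identity $T^{2}/(4m)-1=s/m$ being exactly the content of the definition of $s$ in \eqref{7.302}. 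Because $\lambda\ge\lambda_{2}\ge1$ (guaranteed by \eqref{7.303}) and $s>0$ by hypothesis \eqref{7.301}, both contributions are constants times positive powers of $\delta$, and for $\delta\in(0,1)$ their sum is dominated by $C_{1}\alpha^{-1}\delta^{\rho}$ with $\rho$ equal to the smaller of the two exponents $1$ and $s/m$, which is the last line of \eqref{7.305}.

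The main obstacle is not any individual inequality but keeping the three exponential factors perfectly aligned: the normalizer $e^{-2\lambda b^{2}}$, the lower bound of $\varphi_{\lambda}$ on $Q_{\alpha T}$, and the upper bound of $\varphi_{\lambda}$ on $Q_T$ must combine so that the net multiplier is exactly $\exp(\lambda m)=\delta^{-1}$; only then do the tailored definitions of $m$ and $s$ force the two noise powers $\delta^{1}$ and $\delta^{s/m}$ to appear and collapse into the single H\"{o}lder-type rate $\delta^{\rho}$. Everything else --- the reduction to $Q_{\alpha T}$, the $t$-independence of the coefficients, and the conversion of weighted integrals into ordinary $L_2$ norms --- is routine once this bookkeeping is in place.
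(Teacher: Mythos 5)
Your proposal is correct and is essentially the paper's own proof: the paper likewise restricts the weighted estimate \eqref{7.200} to $Q_{\alpha T}$, uses the two bounds \eqref{4.2} on $\varphi _{\lambda }$ (the lower bound $\varphi _{\lambda }\geq \exp \left( -\lambda \alpha ^{2}T^{2}/2\right) $ on $Q_{\alpha T}$ to extract the $L_{2}\left( \Omega \right) $ norms of the coefficient errors, the upper bound $\varphi _{\lambda }\leq e^{2\lambda b^{2}}$ to turn the initial-error terms into plain $L_{2}\left( Q_{T}\right) $ norms), and then inserts $\lambda \left( \delta \right) =\ln \left( \delta ^{-1/m}\right) $ and $\xi \left( \delta \right) $ from \eqref{7.304} so that $e^{\lambda m}=\delta ^{-1}$, $e^{\lambda m}\xi =2\delta ^{s/m}$ and $e^{\lambda m}\delta ^{2}=\delta $, exactly your bookkeeping. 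One remark: you conclude with $\rho $ equal to the \emph{smaller} of the exponents $1$ and $s/m$, while the theorem as stated has $\rho =\max \left( 1,s/m\right) $; your version is the one that actually follows (for $\delta \in \left( 0,1\right) $ the sum $\delta +\delta ^{s/m}$ is dominated only by the smaller power), and since the paper's own proof also ends with precisely these two powers of $\delta $, the ``$\max $'' in the statement is evidently a misprint for ``$\min $''.
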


\begin{proof}
  Since $Q_{\alpha T}\subset Q_{T},$ then, using (\ref{4.2}),
  we obtain%
  \begin{equation*}
  \left. 
  \begin{array}{c}
  e^{-2\lambda b^{2}}\dint\limits_{Q_{T}}\left[ \left( \beta _{n,\lambda
  ,\delta }\left( \mathbf{x}\right) -\beta ^{\ast }\left( \mathbf{x}\right)
  \right) ^{2}+\left( \gamma _{n,\lambda ,\delta }\left( \mathbf{x}\right)
  -\gamma ^{\ast }\left( \mathbf{x}\right) \right) ^{2}\right] \varphi
  _{\lambda }\left( \mathbf{x},t\right) d\mathbf{x}dt\geq  \\ 
  \geq e^{-2\lambda b^{2}}\dint\limits_{Q_{\alpha T}}\left[ \left( \beta
  _{n,\lambda ,\delta }\left( \mathbf{x}\right) -\beta ^{\ast }\left( \mathbf{x%
  }\right) \right) ^{2}+\left( \gamma _{n,\lambda ,\delta }\left( \mathbf{x}%
  \right) -\gamma ^{\ast }\left( \mathbf{x}\right) \right) ^{2}\right] \varphi
  _{\lambda }\left( \mathbf{x},t\right) d\mathbf{x}dt\geq  \\ 
  \geq \alpha C_{1}e^{-2\lambda b^{2}}\exp \left( -\lambda \alpha
  ^{2}T^{2}/2\right) \left( \left\Vert \beta _{n,\lambda \left( \delta \right)
  ,\delta }-\beta ^{\ast }\right\Vert _{L^{2}\left( \Omega \right)
  }^{2}+\left\Vert \gamma _{n,\lambda \left( \delta \right) ,\delta }-\gamma
  ^{\ast }\right\Vert _{L^{2}\left( \Omega \right) }^{2}\right) .%
  \end{array}%
  \right. 
  \end{equation*}%
  Combining this with (\ref{7.200}), (\ref{7.301})-(\ref{7.304}), we obtain%
  \begin{equation}
  \left. 
  \begin{array}{c}
  \left\Vert \beta _{n,\lambda \left( \delta \right) ,\delta }-\beta ^{\ast
  }\right\Vert _{L^{2}\left( \Omega \right) }^{2}+\left\Vert \gamma
  _{n,\lambda \left( \delta \right) ,\delta }-\gamma ^{\ast }\right\Vert
  _{L^{2}\left( \Omega \right) }^{2}\leq \alpha ^{-1}C_{1}e^{-\lambda s}+ \\ 
  +\alpha ^{-1}C_{1}e^{2\lambda b^{2}}\exp \left( -\lambda \alpha
  ^{2}T^{2}/2\right) \delta ^{2}.%
  \end{array}%
  \right.   \label{7.306}
  \end{equation}%
  For the above choice $\lambda =\lambda \left( \delta \right) $ terms in the
  right hand side of (\ref{7.306}) are:
  \begin{equation*}
  e^{2\lambda b^{2}}\exp \left( -\lambda \alpha ^{2}T^{2}/2\right) \delta
  ^{2}=\delta ,\text{ }e^{-\lambda s}=\delta ^{s/m}.
  \end{equation*}
  This completes the proof.
\end{proof}

We now formulate an analogous theorem for the case when conditions of %
\Cref{Theorem 7.2} are valid.

\begin{theorem} \label{Theorem 7.4}
  {\ Let conditions of Theorem 7.2 hold. Assume that
  inequality (\ref{7.301}) is valid and let }$s${\ be the number defined
  in (\ref{7.302}). Let the regularization parameter }$\xi =2\exp \left(
  -\lambda T^{2}/4\right) .${\ Then the following convergence estimate
  holds for any }$\alpha \in \left( 0,1/\sqrt{2}\right) ${\ and for all }$%
  \lambda \geq \lambda _{1}:$%
  \begin{equation*}
  \left. 
  \begin{array}{c}
  \left\Vert \beta _{n,\lambda }-\beta ^{\ast }\right\Vert _{L^{2}\left(
  \Omega \right) }^{2}+\left\Vert \gamma _{n,\lambda }-\gamma ^{\ast
  }\right\Vert _{L^{2}\left( \Omega \right) }^{2}\leq  \\ 
  \leq \alpha ^{-1}\left( C_{1}/\lambda \right) ^{n}\exp \left( \left[ \lambda
  \left( \alpha ^{2}T^{2}/2+2b^{2}\right) \right] \right) \times  \\ 
  \times \left[ \left\Vert \nabla W_{0,\min ,\lambda ,\xi }-\nabla W^{\ast
  }\right\Vert _{L_{2}^{2}\left( Q_{T}\right) }^{2}+\left\Vert W_{0,\min
  ,\lambda ,\xi }-W^{\ast }\right\Vert _{L^{2}\left( Q_{T}\right) }^{2}\right]
  +C_{1}e^{-\lambda s}.%
  \end{array}%
  \right. 
  \end{equation*}
\end{theorem}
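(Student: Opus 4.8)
The plan is to mirror the proof of \Cref{Theorem 7.3} with $\delta =0$, replacing the noisy coefficient bound (\ref{7.200}) by its noiseless counterpart (\ref{7.300}). The argument has three moves: localize the weighted coefficient integral to the slab $Q_{\alpha T}$ in order to extract the unweighted $L^{2}\left( \Omega \right) $-norms of the coefficient errors, bound the right-hand side of (\ref{7.300}) from above, and finally verify that the choice $\xi =2\exp \left( -\lambda T^{2}/4\right) $ makes the regularization contribution collapse to $C_{1}e^{-\lambda s}$.

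First I would restrict the integral on the left side of (\ref{7.300}) from $Q_{T}$ to $Q_{\alpha T}\subset Q_{T}$. Since $\left( \beta _{n,\lambda }-\beta ^{\ast }\right) ^{2}+\left( \gamma _{n,\lambda }-\gamma ^{\ast }\right) ^{2}$ depends on $\mathbf{x}$ only, integrating it over the time interval $\left( T/2\left( 1-\alpha \right) ,T/2\left( 1+\alpha \right) \right) $ of length $\alpha T$ and invoking the lower bound $\varphi _{\lambda }\geq \exp \left( -\lambda \alpha ^{2}T^{2}/2\right) $ on $Q_{\alpha T}$ from (\ref{4.2}) yields
\begin{equation*}
e^{-2\lambda b^{2}}\dint\limits_{Q_{T}}\left[ \left( \beta _{n,\lambda }-\beta ^{\ast }\right) ^{2}+\left( \gamma _{n,\lambda }-\gamma ^{\ast }\right) ^{2}\right] \varphi _{\lambda }d\mathbf{x}dt\geq \alpha C_{1}e^{-2\lambda b^{2}}\exp \left( -\lambda \alpha ^{2}T^{2}/2\right) \left( \left\Vert \beta _{n,\lambda }-\beta ^{\ast }\right\Vert _{L^{2}\left( \Omega \right) }^{2}+\left\Vert \gamma _{n,\lambda }-\gamma ^{\ast }\right\Vert _{L^{2}\left( \Omega \right) }^{2}\right) ,
\end{equation*}
precisely as in the opening display of the proof of \Cref{Theorem 7.3}.

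For the upper bound I would feed in (\ref{7.300}). In its initial-error term I would replace $\varphi _{\lambda }$ by its upper bound $\exp \left( 2\lambda b^{2}\right) $ on $Q_{T}$ coming from (\ref{4.2}), so as to pass from the weighted integral of $\left\vert \nabla s_{0}\right\vert ^{2}+\left\vert s_{0}\right\vert ^{2}$ to the unweighted norms $\left\Vert \nabla W_{0,\min ,\lambda ,\xi }-\nabla W^{\ast }\right\Vert _{L^{2}\left( Q_{T}\right) }^{2}+\left\Vert W_{0,\min ,\lambda ,\xi }-W^{\ast }\right\Vert _{L^{2}\left( Q_{T}\right) }^{2}$. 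Dividing the combined inequality by the common factor $\alpha C_{1}e^{-2\lambda b^{2}}\exp \left( -\lambda \alpha ^{2}T^{2}/2\right) $ then produces the stated prefactor $\alpha ^{-1}\left( C_{1}/\lambda \right) ^{n}\exp \left( \lambda \left( \alpha ^{2}T^{2}/2+2b^{2}\right) \right) $ multiplying the initial-error bracket.

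The one place needing an honest computation is the regularization term. After the same division, $\left( C_{1}/\lambda \right) \xi $ turns into a multiple of $\xi \, e^{2\lambda b^{2}}\exp \left( \lambda \alpha ^{2}T^{2}/2\right) $, and inserting $\xi =2\exp \left( -\lambda T^{2}/4\right) $ gives the exponent $\lambda \left( 2b^{2}+\alpha ^{2}T^{2}/2-T^{2}/4\right) $. Comparing with the definition (\ref{7.302}) of $s$ shows this exponent equals $-\lambda s$, so the contribution reduces to $C_{1}e^{-\lambda s}$ once $2/\left( \alpha \lambda \right) $ is absorbed into $C_{1}$; hypothesis (\ref{7.301}) is precisely the condition $s>0$, which is what makes the bound decay in $\lambda $. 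I do not expect a serious obstacle here, since the whole argument is the $\delta =0$ specialization of \Cref{Theorem 7.3}, and the only care required is the bookkeeping of the exponential prefactors so that the claimed $\exp \left( \lambda \left( \alpha ^{2}T^{2}/2+2b^{2}\right) \right) $ and $e^{-\lambda s}$ emerge correctly.
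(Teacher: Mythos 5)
Your proposal is correct and is essentially the paper's own argument: the paper omits the proof of \Cref{Theorem 7.4} precisely because it is the $\delta=0$ analogue of \Cref{Theorem 7.3}, replacing (\ref{7.200}) by (\ref{7.300}), and your three steps (localization to $Q_{\alpha T}$ with the lower bound from (\ref{4.2}), passing to unweighted norms via $\varphi_{\lambda}\leq e^{2\lambda b^{2}}$, and the exponent bookkeeping showing $\xi e^{2\lambda b^{2}}e^{\lambda\alpha^{2}T^{2}/2}=2e^{-\lambda s}$) reproduce that argument exactly. The only cosmetic caveat, which the paper's own statement shares, is that absorbing the factor $\alpha^{-1}$ from $2/(\alpha\lambda)$ into $C_{1}=C_{1}(M,T,\Omega)$ is a slight abuse of notation, but this matches the stated form of the theorem.
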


We omit the proof of this theorem since it is similar with the proof of %
\Cref{Theorem 7.3}.

\section{Numerical Studies}

\label{sec:8}

In this section, we discuss some tests, which demonstrate the numerical
performance of the method. We note first that even though our Theorems
7.1-7.4 require sufficiently large values of $\lambda >1,$ we establish
numerically in subsection 8.1 that $\lambda =5$ can be chosen as an optimal
value of this parameter. This coincides with the observation of many past
publications on the convexification method: we refer to e.g. \cite%
{Bak,KL,Epid,SAR,KTR,Le}, in which optimal values of $\lambda $ were $%
\lambda \in \left[ 1,5\right] .$ This is similar with many cases of the
asymptotic theories. Indeed, such a theory sometimes states that if a
certain parameter $X$ is sufficiently large, then a certain formula $Y$ has
a good accuracy. However, in a computational practice only results of
numerical experiments can establish what exactly \textquotedblleft
sufficiently large $X$" means in computations.

To demonstrate the fact that our method can image rather complicated shapes
of inclusions, we choose letters--like shapes of inclusions. More precisely,
our shapes mimic letters $A,M,\Omega $ and $B.$ Indeed, these shapes are
non-convex and have voids. We are not concerned with the accurate imaging of
the fine structures of edges. Instead, we want to compute rather accurately
just shapes of inclusions and values of the unknown coefficients $\beta
\left( \mathbf{x}\right) $ and $\gamma \left( \mathbf{x}\right) .$

\subsection{Numerical setup}

\label{sec:8.1}

In our numerical simulations, the domain $\Omega =\{\mathbf{x}%
=(x,y):1<x<2,|y|<0.5\}$ and $T=1$ is the end time. \ The larger domain $G,$
where the forward problem (\ref{2.2})- (\ref{2.6}) is computed to generate
data for the inverse problem, is $G=\{\mathbf{x}=(x,y):(x-1.5)^{2}+y^{2}<1%
\}. $ The viscosity term is $d=0.1$, the velocities are $q_{s}=q_{I}=q_{R}=%
\left( 0.2,0.2\right) ^{T}.$ The Neumann boundary conditions (\ref{2.5}) and
the initial conditions (\ref{2.6}) are chosen as: 
\begin{equation*}
\left. 
\begin{array}{c}
g_{1}(\mathbf{x},t)=g_{2}(\mathbf{x},t)=g_{3}(\mathbf{x},t)=0, \\ 
\rho _{S}^{0}(\mathbf{x})=0.6,\quad \rho _{I}^{0}(\mathbf{x})=0.8,\quad \rho
_{R}^{0}(\mathbf{x})=0.%
\end{array}%
\right.
\end{equation*}

To solve the forward problem, we use the PDE Toolbox in MATLAB to implement
a Finite Element Method with the maximal mesh edge length 0.05. We compute
the data $r_{j}$, $f_{j}$, $j=1,2,3$ in (\ref{2.10}), (\ref{2.11}) for
eleven (11) equally spaced temporal points on $[0,T]$.

In all our numerical experiments $\beta \left( \mathbf{x}\right) =\gamma
\left( \mathbf{x}\right) =0.1$ in the data simulation process in the part of
the domain $G$, which is outside of our letters-like shapes. However, when
computing the inverse problems, we assume that functions $\beta \left( 
\mathbf{x}\right) $ are $\gamma \left( \mathbf{x}\right) $ unknown in the
entire domain $\Omega ,$ see (\ref{2.01}).

To solve the inverse problem, we write the differential operators of the
objective functional $J_{n,\lambda ,\xi }$ in (\ref{5.8}) in the form of
finite differences and minimize the so discretized functional with respect
to the values of the vector function $W$ at grid points. More precisely, we
discretize $\Omega $ into a $33\times 33$ meshgrid of equal edge length and $%
[0,T]$ into 11 equally spaced points. The data generated by the forward
solver are interpolated correspondingly to fit this mesh. We then add noise
to the data using different noise levels. For $j=1,2,3$, let $\mathbf{P}_{j}$%
, $\mathbf{R}_{j}$, $\mathbf{F}_{j}$ be the discretized matrices of $p_{j}(%
\mathbf{x})$, $r_{j}(\mathbf{x},t)$, $f_{j}(y,t)$, respectively. These
functions are defined in (\ref{2.9})-(\ref{2.11}). We use the following
noise model: given a matrix $\mathbf{A}$, the noisy version of $\mathbf{A}$
is 
\begin{equation*}
\mathbf{A}^{\text{noise}}=\mathbf{A}+\delta \Vert \mathbf{A}\Vert _{\infty }%
\text{Rand}(\mathbf{A}),
\end{equation*}%
where $\text{Rand}(\mathbf{A})$ is a matrix of the same size as $\mathbf{A}$
whose entries are uniformly generated random numbers on $(-1,1)$, $\Vert 
\mathbf{A}\Vert _{\infty }:=\max_{ij}\left\vert \mathbf{A}_{ij}\right\vert $
is the infinity norm of $\mathbf{A}$, and $\delta \in \lbrack 0,1)$ is the
noise level. Since we need to differentiate the data $r_{j}(\mathbf{x},t)$, $%
f_{j}(y,t)$ twice with respect to $t$, we first use cubic smoothing splines
to approximate the noisy data, then differentiate these splines with respect
to $t$. We proceed similarly to compute the required derivatives of $p_{j}(%
\mathbf{x})$. The regularization parameter\ $\xi =10^{-2}$ is chosen by
trial and error. For simplicity, we implement the $H_{6}^{2}(Q_{T})-$norm
for the regularization term instead of the $H_{6}^{4}(Q_{T})-$norm as in the
theory. We observe numerically that the $H_{6}^{2}(Q_{T})$ norm yields
sufficiently good results.

To minimize the discretized functional $J_{n,\lambda ,\xi }$, we use the 
\textbf{lsqlin} function of MATLAB. This is a function in the Optimization
Toolbox designed to solve linear problems using the least squares methods.
We stop the iterative process when the difference between the solutions of
the current step and the previous step is less than $10^{-5}$. This stopping
criterion is reached within $5$ iterations in all our numerical experiments.

We select the parameter of the Carleman Weight Function $\lambda =5$. See
Figure \ref{fig:lambda} for an experiment for different values of $\lambda $%
. In this experiment, we run our method to reconstruct $\gamma $, and $%
\lambda =5$ gives the best result in terms of both shape and value of $%
\gamma $. It is worth noting that the method fails when $\lambda =0$, i.e.
in the case when the CWF is absent in the objective functional $J_{n,\lambda
,\xi }.$ This further justifies the above theory since analogs of all our
theorems are invalid at $\lambda =0.$ We point out that, once chosen, the
value $\lambda =5$ is used in all other tests.

\begin{figure}[ht!]
\centering
\subfloat[True
$\gamma$]{\includegraphics[width=0.3\linewidth]{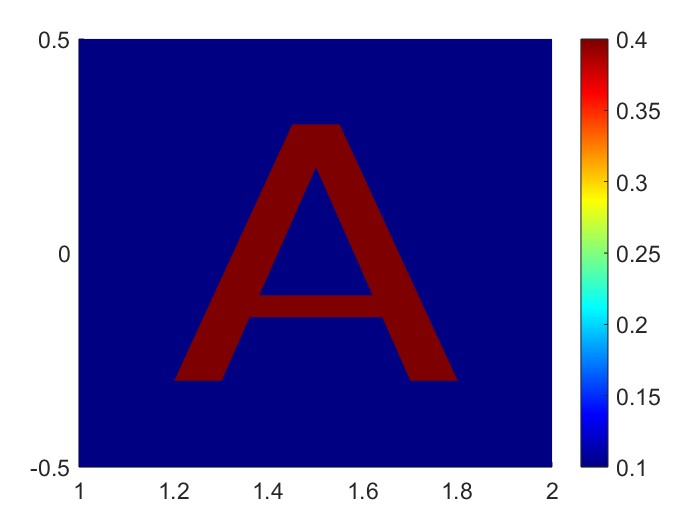}} 
\subfloat[$\lambda =
0$]{\includegraphics[width=0.3\linewidth]{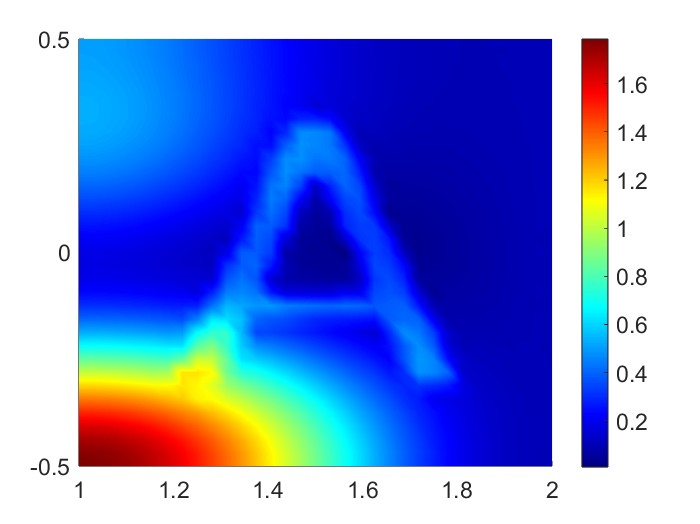}} 
\subfloat[$\lambda =
3$]{\includegraphics[width=0.3\linewidth]{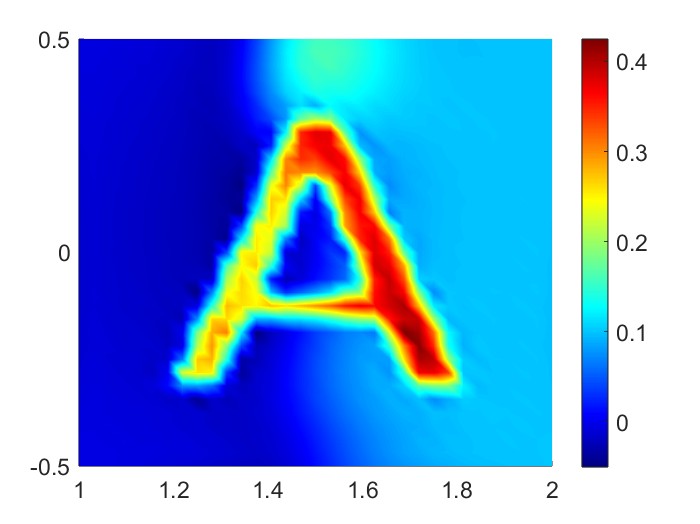}} \\
\subfloat[$\lambda =
5$]{\includegraphics[width=0.3\linewidth]{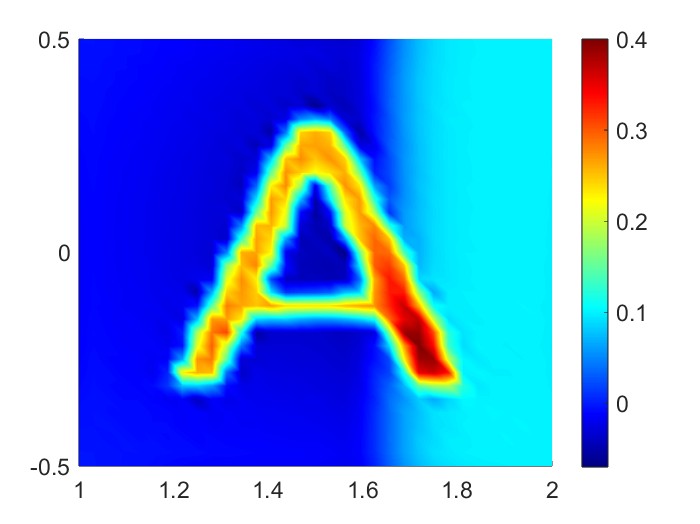}} 
\subfloat[$\lambda =
7$]{\includegraphics[width=0.3\linewidth]{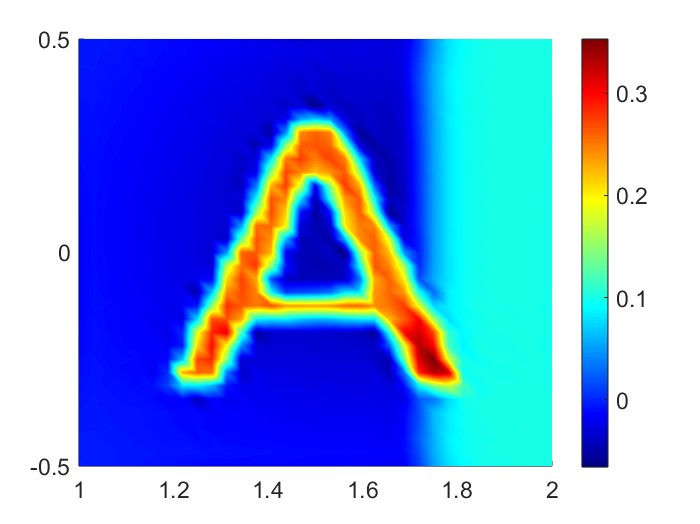}} 
\subfloat[$\lambda =
10$]{\includegraphics[width=0.3\linewidth]{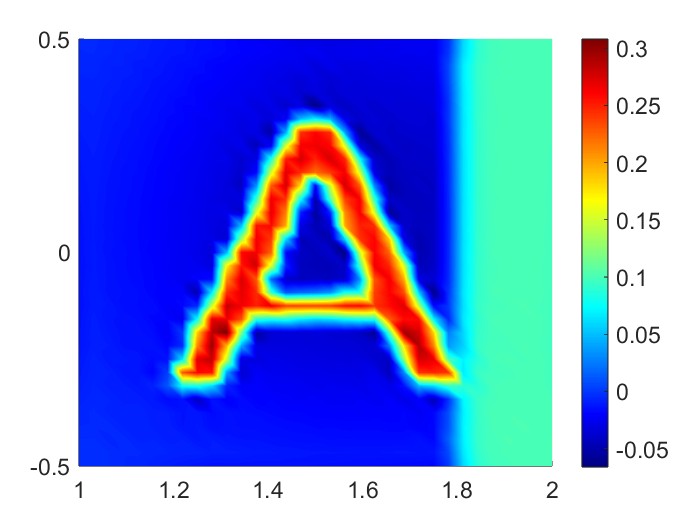}}
\caption{Experiment with different values of $\protect\lambda$. We choose $%
\protect\lambda =5$ as an optimal value of $\protect\lambda .$ This value is
used in all other tests.}
\label{fig:lambda}
\end{figure}

\subsection{Testing for different noise levels}

\label{sec:8.2}

In this test, we consider $\gamma $ and $\beta $ be as follows, 
\begin{equation*}
\gamma (\mathbf{x})=\left\{ 
\begin{array}{ll}
0.4 & \text{\textbf{$x$} inside of `}A\text{'} \\ 
0.1 & \text{\textbf{$x$} outside of `}A\text{'}%
\end{array}%
\right. ,\quad \beta (\mathbf{x})=\left\{ 
\begin{array}{ll}
0.6 & \text{\textbf{$x$} inside `}M\text{'} \\ 
0.1 & \text{\textbf{$x$} outside `}M\text{'}%
\end{array}%
\right. .
\end{equation*}%
Figures \ref{fig:gamma1} and \ref{fig:beta1} show the reconstructions of $%
\gamma $ and $\beta $ for at different noise levels: $\delta =0$, $\delta
=2\%$, and $\delta =5\%$. We obtain good reconstructions of both the shape
and the values of the unknown coefficients for the letters at lower noise
levels. When noise is $5\%$, the reconstruction of the letter `$M$' is
distorted, but we can still see the general shape and reasonable value.

\begin{figure}[ht!]
\centering
\subfloat[True
$\gamma$]{\includegraphics[width=0.3\linewidth]{gamma_true.jpg}} \\
\subfloat[$0\%$
noise]{\includegraphics[width=0.3\linewidth]{gamma_rec_noise0.jpg}} 
\subfloat[$2\%$
noise]{\includegraphics[width=0.3\linewidth]{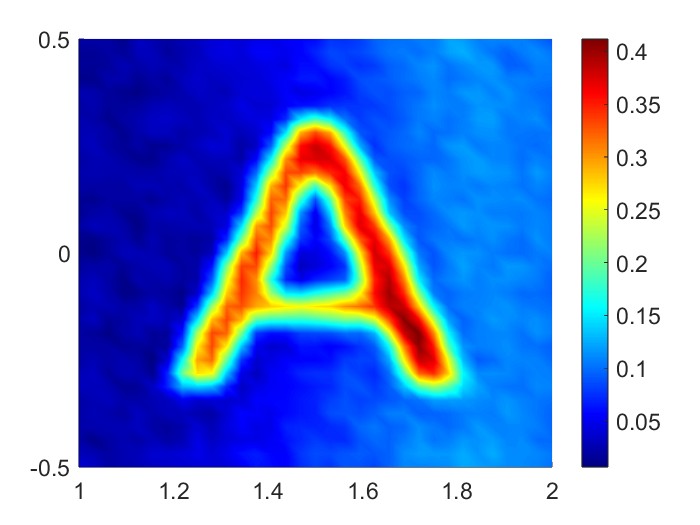}} 
\subfloat[$5\%$
noise]{\includegraphics[width=0.3\linewidth]{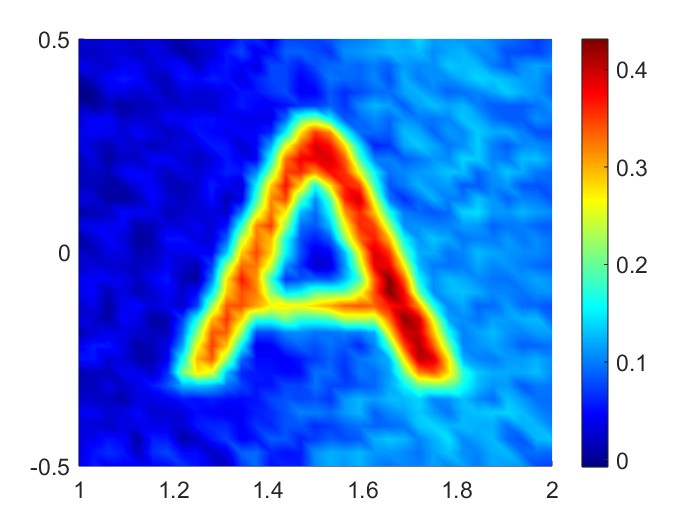}}
\caption{Reconstruction of $\protect\gamma$ (letter ``A'') at different
noise levels.}
\label{fig:gamma1}
\end{figure}

\begin{figure}[ht!]
\centering
\subfloat[True $\beta$]{\includegraphics[width=0.3\linewidth]{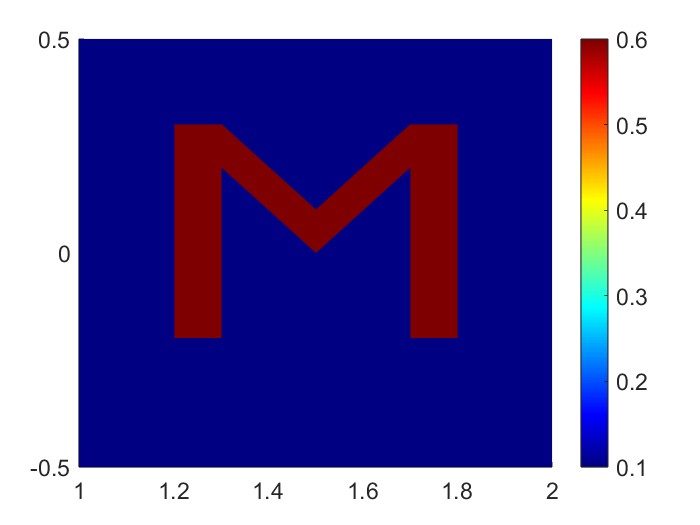}}
\\
\subfloat[$0\%$
noise]{\includegraphics[width=0.3\linewidth]{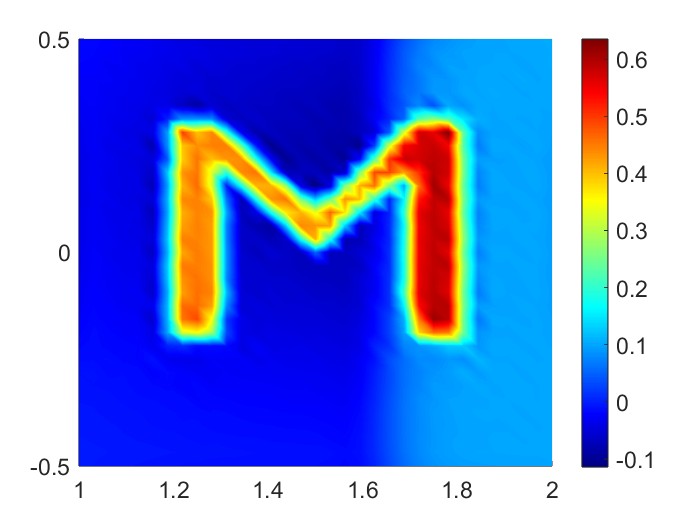}} 
\subfloat[$2\%$
noise]{\includegraphics[width=0.3\linewidth]{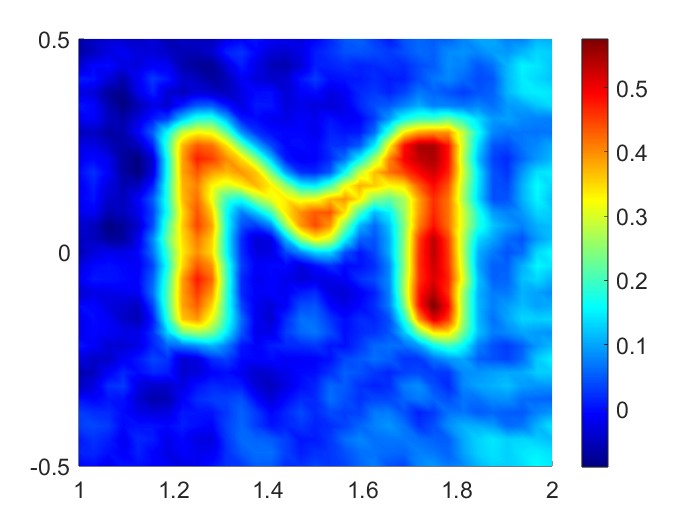}} 
\subfloat[$5\%$
noise]{\includegraphics[width=0.3\linewidth]{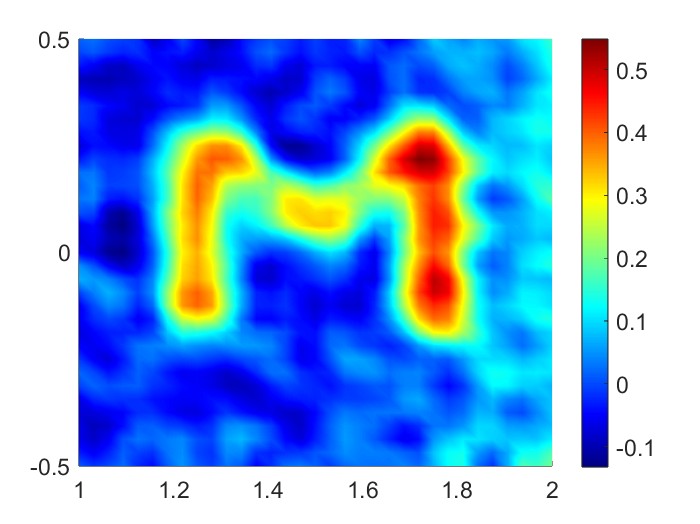}}
\caption{Reconstruction of $\protect\beta$ (letter ``M'') at different noise
levels.}
\label{fig:beta1}
\end{figure}

\subsection{Testing of different unknown coefficients}

\label{sec:8.3}

In this test, our method reconstructs different functions $\gamma (\mathbf{x}%
)$ and $\beta (\mathbf{x})$. The noise level for this test is $\delta =2\%$.
We consider two cases: 
\begin{equation*}
\gamma (\mathbf{x})=\left\{ 
\begin{array}{ll}
0.4 & \text{\textbf{$x$} inside `$\Omega $'} \\ 
0.1 & \text{\textbf{$x$} outside `$\Omega $'}%
\end{array}%
\right. ,\quad \beta (\mathbf{x})=\left\{ 
\begin{array}{ll}
0.6 & \text{\textbf{$x$} inside `}B\text{'} \\ 
0.1 & \text{\textbf{$x$} outside `}B\text{'}%
\end{array}%
\right.
\end{equation*}%
and 
\begin{equation*}
\gamma (\mathbf{x})=\left\{ 
\begin{array}{ll}
0.8 & \text{\textbf{$x$} inside `$\Omega $'} \\ 
0.1 & \text{\textbf{$x$} outside `$\Omega $'}%
\end{array}%
\right. ,\quad \beta (\mathbf{x})=\left\{ 
\begin{array}{ll}
1 & \text{\textbf{$x$} inside `}B\text{'} \\ 
0.1 & \text{\textbf{$x$} outside `}B\text{'}%
\end{array}%
\right. ,
\end{equation*}%
see Figures \ref{fig:case21} and \ref{fig:case22}, respectively. Even with
different shapes and values of $\gamma $ and $\beta $, our method still
performs quite well.

\begin{figure}[ht!]
\centering
\subfloat[True
$\gamma$]{\includegraphics[width=0.3\linewidth]{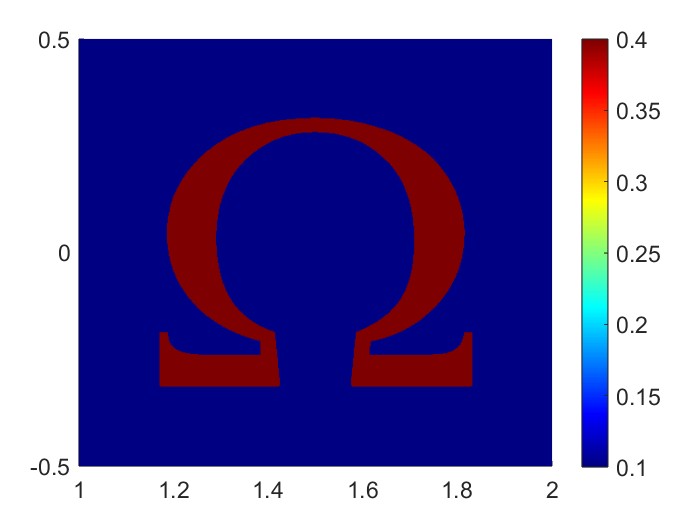}} 
\subfloat[Reconstructed
$\gamma$]{\includegraphics[width=0.3\linewidth]{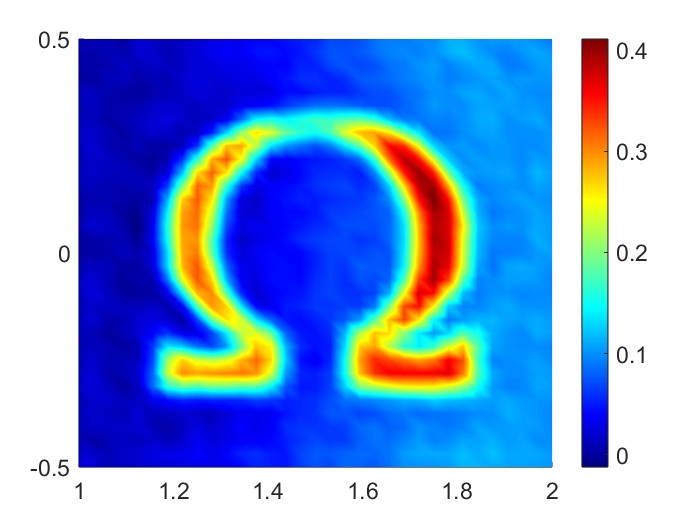}} 
\\
\subfloat[True
$\beta$]{\includegraphics[width=0.3\linewidth]{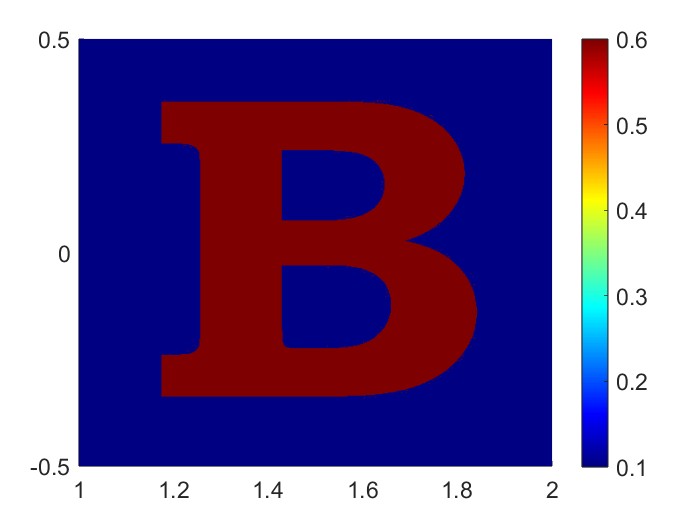}} 
\subfloat[Reconstructed
$\beta$]{\includegraphics[width=0.3\linewidth]{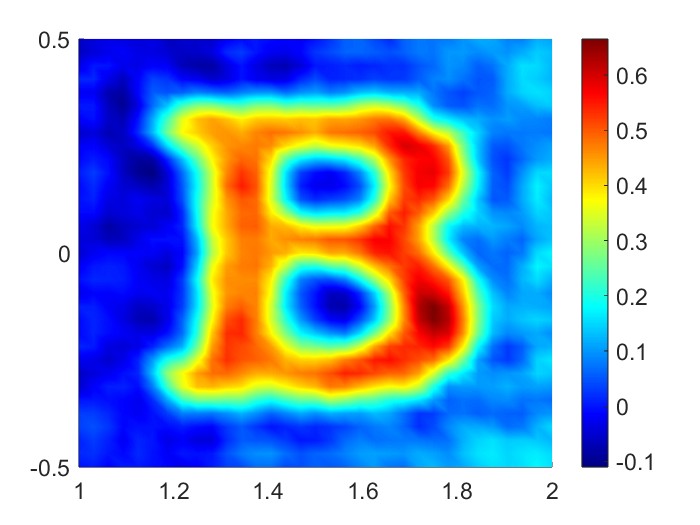}}
\caption{Reconstruction of $\protect\gamma$ (letter ``$\Omega$'') and $%
\protect\beta$ (letter ``B'') with values 0.4 and 0.6 respectively inside
these shapes and 0.1 outside at 2\% of the noise.}
\label{fig:case21}
\end{figure}

\begin{figure}[th]
\centering
\subfloat[True
$\gamma$]{\includegraphics[width=0.3\linewidth]{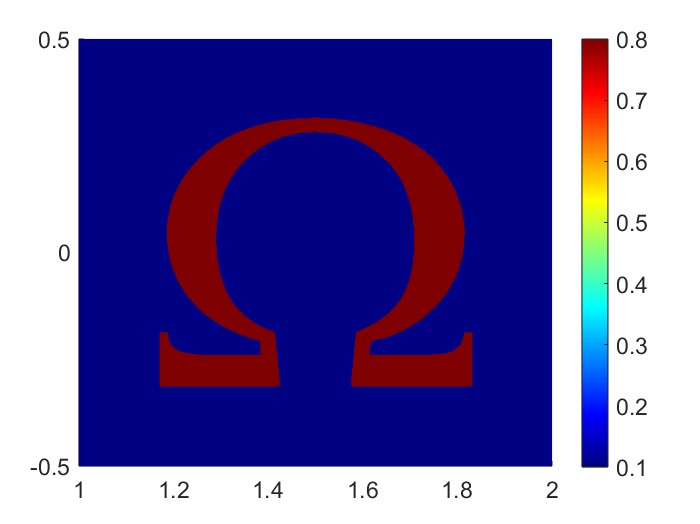}} 
\subfloat[Reconstructed
$\gamma$]{\includegraphics[width=0.3\linewidth]{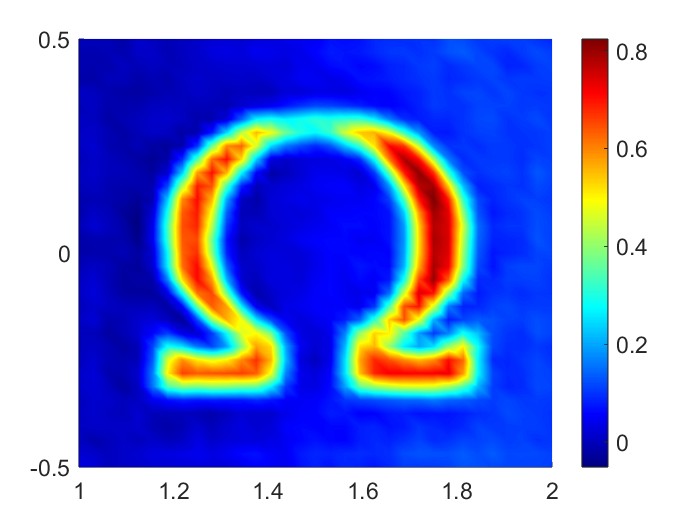}} 
\\
\subfloat[True
$\beta$]{\includegraphics[width=0.3\linewidth]{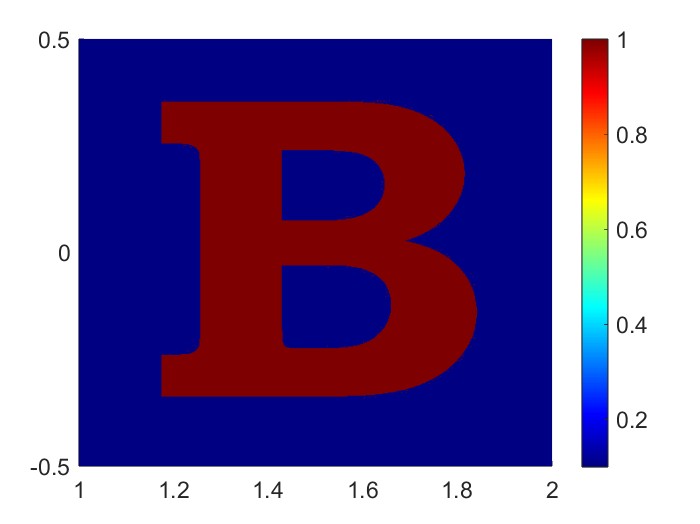}} 
\subfloat[Reconstructed
$\beta$]{\includegraphics[width=0.3\linewidth]{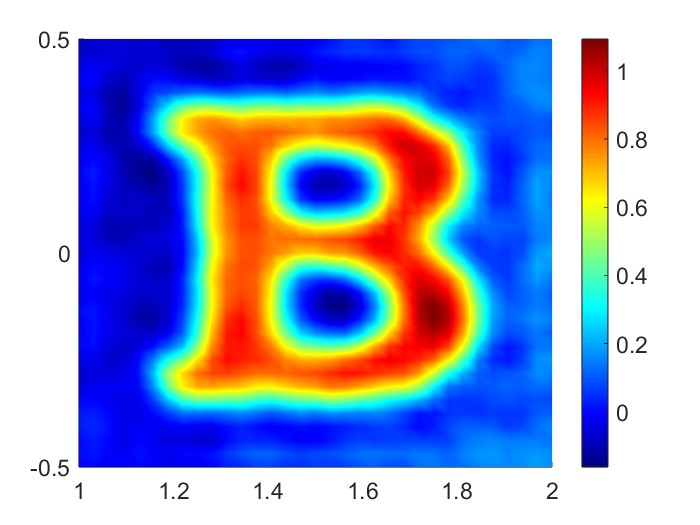}}
\caption{Reconstruction of $\protect\gamma $ (letter \textquotedblleft $%
\Omega $\textquotedblright ) and $\protect\beta $ (letter \textquotedblleft
B\textquotedblright ) with values 0.8 and 1 respectively inside these shapes
and 0.1 outside at 2\% of the noise.}
\label{fig:case22}
\end{figure}

\section*{Acknowledgments}

The work of M.~V.~Klibanov was partially supported by National Science
Foundation grant DMS 2436227.

\bibliographystyle{siamplain}
\bibliography{references}

\end{document}